\theoremstyle{plain}
\newtheorem{theorem}{Theorem}[section] 
\newtheorem{lem}[theorem]{Lemma}
\newtheorem{prop}[theorem]{Proposition}
\newtheorem{cor}[theorem]{Corollary}
\newtheorem{ass}{Assumption}[section]
\theoremstyle{remark}
\newtheorem{rem}[theorem]{Remark}
\theoremstyle{definition}
\newtheorem{defn}{Definition}[section]
\newcommand{\A}{\mathbb{A}}
\newcommand{\Ax}{\mathbb{A}^{\times}}
\newcommand{\R}{\mathbb{R}}
\newcommand{\Rx}{\mathbb{R}^{\times}}
\newcommand{\C}{\mathbb{C}}
\newcommand{\Cx}{\mathbb{C}^{\times}}
\newcommand{\Z}{\mathbb{Z}}
\newcommand{\Zp}{\mathbb{Z}_{p}}
\newcommand{\Zpx}{\mathbb{Z}_{p}^{\times}}
\newcommand{\Q}{\mathbb{Q}}
\newcommand{\Qx}{\mathbb{Q}^{\times}}
\newcommand{\Qp}{\mathbb{Q}_{p}}
\newcommand{\Qpx}{\mathbb{Q}_{p}^{\times}}
\newcommand{\Bc}{\mathcal{B}}
\newcommand{\Cc}{\mathcal{C}}
\newcommand{\Ccinf}{\mathcal{C}^{\infty}_{c}}
\newcommand{\Wh}{\mathcal{W}}
\newcommand{\me}{\mathfrak{m}}
\newcommand{\Me}{\mathcal{M}}
\newcommand{\V}{\mathcal{V}}
\newcommand{\Xf}{\mathfrak{X}}
\newcommand{\of}{\mathfrak{o}}
\newcommand{\ofx}{\mathfrak{o}^{\times}}
\newcommand{\pf}{\mathfrak{p}}
\newcommand{\Hf}{\mathfrak{H}}
\newcommand{\Ga}{\mathfrak{G}}
\newcommand{\Fx}{F^{\times}}
\newcommand{\Fxh}{\hat{F}^{\times}}
\newcommand{\GL}{\operatorname{GL}}
\newcommand{\SL}{\operatorname{SL}}
\renewcommand{\Re}{\operatorname{Re}}
\renewcommand{\Im}{\operatorname{Im}}
\newcommand{\Ind}{\operatorname{Ind}}
\newcommand{\sgn}{\operatorname{sgn}}
\newcommand{\isom}{\cong}
\newcommand{\Stab}{\operatorname{Stab}}
\newcommand{\Vol}{\operatorname{Vol}}
\newcommand{\supp}{\operatorname{supp}}
\newcommand{\diag}{\operatorname{diag}}
\newcommand{\Id}{\operatorname{Id}}
\newcommand{\IT}{\operatorname{IT}}
\newcommand{\Char}{\operatorname{Char}}
\newcommand{\lcm}{\operatorname{lcm}}
\newcommand{\bs}{\backslash}
\newcommand{\Mod}[1]{\ (\operatorname{mod}\ #1)}
\newcommand{\dxy}{d^{\times}y}
\newcommand{\dxx}{d^{\times}x}
\newcommand{\Kl}{\mathcal{K}\ell}
\newcommand{\KL}{\operatorname{KL}}
\newcommand{\mat}[4]{\left(\begin{array}{cc}
#1 & #2 \\ #3 & #4
\end{array}\right)}
\newcommand{\smat}[4]{\left(\begin{smallmatrix}
#1 & #2 \\ #3 & #4
\end{smallmatrix}\right)}
\newcommand{\tran}[2]{\tensor[^t]{#1}{^{#2}}}
\newcommand{\abs}[1]{\lvert{#1}\rvert}
\newcommand{\ifs}{\text{if }}
\newcommand{\andy}{\quad\text{and}\quad}
\title[Vorono\u{\i} summation for $\GL_{n}$]{Vorono\u{\i} summation for $\GL_{n}$: collusion between level and modulus}
\author{Andrew Corbett}
\newcommand{\Date}{1$^{\mathrm{st}}$ July 2018}
\date{\Date}
\address{Mathematisches Institut, Bunsenstr.\ 3-5, 37073 G{\"o}ttingen}
\email{andrew.corbett@uni-goettingen.de}
\begin{document}

\begin{abstract}

We investigate the Vorono\u{\i} summation problem for $\GL_{n}$ in the level aspect for $n\geq 2$. Of particular interest are those primes at which the level and modulus are jointly ramified -- a common occurrence in analytic number theory when using techniques such as the Petersson trace formula. Building on previous legacies, our formula stands as the most general of its kind; in particular we extend the results of Ichino--Templier \cite{ichino-templier}. We also give (classical) refinements of our formula and study the $p$-adic generalisations of the Hankel transform.


\end{abstract}

\maketitle

\setcounter{tocdepth}{1}
\tableofcontents

\section{Developments in the \texorpdfstring{Vorono\u{\i}}{Voronoi} summation problem for \texorpdfstring{$\GL_{n}$}{GL(n)}}\label{sec:introduction}

The Vorono\u{\i} summation formula for $\GL_{n}$ with $n\geq 2$ is a relative of the Poisson summation formula, which describes the $n=1$ case. The formula itself expresses a sum of additively twisted Fourier coefficients of automorphic forms on $\GL_{n}$ in terms of a sum over corresponding dual terms. The beauty of this transformation is that the dual side exhibits cancellation in the original summation by redistributing the weight of the summation. This structural shift is intrinsically tied to the functional equation of $\GL_{n}\times\GL_{1}$-$L$-functions.

The Vorono\u{\i} summation formula is an essential tool in the analytic theory of automorphic forms. It features critically in the study of moments of $L$-functions and subconvexity results \cite{assing-yet-another,li-subconvexity-annals} but also in more esoteric problems, such as reciprocity formulae for spectral averages \cite{blomer-khan-reciprocity-1,blomer-kahn-reciprocity-2,blomer-li-miller} and bounding norms of automorphic forms \cite{blomer-four,buttcane-khan}. Vorono\u{\i} himself \cite{voronoi-1} constructed the first such prototype formulae with the motivation of giving beyond trivial bounds for the error terms in Dirichlet's divisor problem and Gau{\ss}' circle problem.\footnote{We recommend the historical account of Miller--Schmid \cite{miller-schmid-book} who describe how the original results of Vorono\u{\i} are related to the summation formulae of the present day.}

Despite the ubiquity of the Vorono\u{\i} summation formula, full generality in the level aspect has previously not been addressed: this is the goal of the present work. Our results describe such general summation formulae from two perspectives:

\begin{itemize}

\item[(i)] A precise Vorono\u{\i} summation formula for $\GL_{n}$ in the most general setting, in particular allowing the level and modulus to ramify jointly. See Theorem \ref{thm:general-classical-intro} for sums of Hecke eigenvalues and Theorem \ref{thm:voronoi-general} for Whittaker functions.

\item[(ii)] We give two refinements of Theorem \ref{thm:general-classical-intro}: the first describes a convenient choice of test vector at which we explicate the Bessel transform; the second determines a sum in an arithmetic progression. See Corollaries \ref{cor:supercuspidal-intro} \& \ref{cor:ap-intro}.

\end{itemize}

Miller--Schmid \cite{miller-schmid-general} prove the level $N=1$ case of our formula in Theorem \ref{thm:general-classical-intro} without non-archimedean test functions. They attribute the difficulty of the level aspect to the lack of understanding of Atkin--Lehner theory for $\GL_{n}$-automorphic forms when $n> 2$. One naturally understands new and old forms as vectors which are respectively fixed by a filtration of compact open subgroups.

Our solution to this problem is to understand the corresponding dual summands locally, via a $p$-adic transform derived from the local functional equation for $\GL_{n}\times \GL_{1}$ at primes $p$ dividing the level. Our formulae in (i) describe a \textit{general} choice of vector in such transforms.
We go on to give a prototype result for a \textit{particular} choice of vector in (ii). This allows one to study the Fourier coefficients away from the level in a more refined and aesthetic way.

\subsection{A general classical formula}\label{sec:general-formula-intro}

An automorphic form $f$ on $\SL_{n}(\R)$, or rather on the adele group $\GL_{n}(\A_{\Q})$, naturally generates an automorphic representation of $\GL_{n}(\A_{\Q})$, which we denote by $\pi_{f}=\otimes_{v}\pi_{v}$. If such a form is a so-called newform then the associated representation is irreducible.

In this work we consider Maa\ss\ cusp forms on $\SL_{n}(\R)$ of level $N\geq 1$; these are the eigenvectors in $ L^{2}(\Gamma_{1}(N)\bs \SL_{n}(\R))$ of the generalised Laplacian whose constant terms are zero. If truth be told, we study just the Fourier coefficients $A_{f}\colon \Z^{n-1}\rightarrow\C$, as given in Definition \ref{def:classical-fourier-coeffs}. The normalisation of these Fourier coefficients coincides with Goldfeld's \cite[p.\ 260, (9.1.2)]{goldfeld}. For example, at least when the entries of $A_f$ are prime to $N$, they are the Hecke eigenvalues of $f$ realised as a product of Schur polynomials in the Satake parameters of $\pi_{f}$; see Lemma \ref{lem:shintani-classical}. The Ramanujan conjecture predicts that $A_f(m_1,\ldots,m_{n-1})\ll (m_1\cdots m_{n-1})^{\varepsilon}$. As we consider only cusp forms, we have $A_f(m_1,\ldots,m_{n-1})=0$ for $m_1\cdots m_{n-1}=0$.


Let $\chi$ be a (Dirichlet) character modulo $N$. For $f\in L^{2}(\Gamma_{1}(N)\bs \SL_{n}(\R))$ we say that $f$ has character $\chi$ if and only if
\begin{equation*}
f(\gamma g)=\chi(\gamma_{n,n})f(g)
\end{equation*}
for all $\gamma=(\gamma_{i,j})\in\SL_{n}(\Z)$ such that $\gamma_{n,i}\equiv 0 \Mod{N}$ for $1\leq i \leq n-1$. Without loss of generality, we always assume this property of $f$.\footnote{To avoid confusion, note that traditionally authors speak of $f$ having character $\chi$ with respect to the group $\Gamma_{0}(N)$, which is precisely the inference in the present article.} Moreover, $\chi$ determines the parity of $f$: $f$ is even, or respectively odd, if and only if $\chi$ is.


\begin{theorem}\label{thm:general-classical-intro}

Let $N\geq 1$ be an integer and $\chi$ a Dirichlet character modulo $N$. For $n\geq 2$ let $f\in L^{2}(\Gamma_{1}(N)\bs \SL_{n}(\R))$ be a Maa\ss\ cusp form with character $\chi$ and assume that $f$ is Hecke eigenform at each prime $p\nmid N$. Let $M\geq 1$ be an integer and for each $p\mid M$ choose some $\phi_{p}\in \Ccinf(\Qpx)$; also pick $\phi_{\infty}\in \Ccinf(\Rx)$. Let $c:=(c_{2},\ldots, c_{n-1})\in\Z^{n-2}$ such that $(c_{2}\cdots c_{n-1},MN)=1$. For $a,\ell,q\in\Z$ with $a\neq 0$, $\ell,q\geq 1$, $(a,\ell q)=(q,NM)=1$ and $\ell\mid (NM)^{\infty}$
we have the following Vorono\u{\i} summation formula:
\begin{equation*}\label{eq:main-voronoi-classical}
\begin{array}{r}\vspace{0.15in}
\displaystyle
\sum_{m\in\Z_{\neq 0}}
e\bigg(\frac{am}{\ell q}\bigg)
\frac{A_{f}(m,c_{2},\ldots,c_{n-1})}{\abs{m}^{\frac{n-1}{2}}}
\phi_{\infty}(m)\prod_{p\mid M}\phi_{p}(m)={q^{n-2}\prod_{i=2}^{n-1}\abs{c_{i}}^{(n-i)(\frac{i}{2}-1)}}\,\times\hspace{0.3in}
\\\vspace{0.15in}
\displaystyle
\sum_{\substack{m,r\in\Z_{\neq 0}\\(m,MN)=1\\r\mid (MN)^{\infty}}}
\sum_{d_{n-1}\mid qc_{2}}\, \sum_{d_{n-2}\mid \frac{q c_{2}c_3 }{d_{n-1}}}\cdots \sum_{d_2\mid \frac{q c_{2} \cdots c_{n-1}}{d_{n-1}\cdots d_{3}}}\KL\left(\overline{a\lambda}_{\ell}\ell r,m;q,c,d\right)\chi\left(\bar{m}\frac{qc_{2}\cdots c_{n-1}}{d_{n-1}\cdots d_{2}}\right)^{-1}
\\
\displaystyle\times\,
\frac{A_{f}(d_{n-1},\ldots,d_{2},m)}{\abs{m}^{\frac{n-1}{2}}\prod_{i=2}^{n-1}d_{i}^{\frac{i(n-i)}{2}}}\Bc_{\pi_\infty,\phi_{\infty}}\left(\frac{r m}{\lambda_{\ell}q^{n}} \prod_{i=2}^{n-1}\frac{ d_{i}^{i}}{c_{i}^{n-i}}\right)\prod_{p\mid MN}\Bc_{\pi_p,\Phi_{p}^{a/\ell q}}\left(\frac{r m}{\lambda_{\ell}q^{n}} \prod_{i=2}^{n-1}\frac{d_{i}^{i}}{c_{i}^{n-i}}\right)
\end{array}
\end{equation*}
where $\lambda_{\ell}:=[\ell,N]\ell^{n-1}L^{n}$ with $L$ denoting the largest square-free integer dividing $MN$ and $[\ell,N]:=\lcm(\ell,N)$; we fix inverses $a\bar{a}\equiv\lambda_{\ell}\bar{\lambda}_{\ell}\equiv 1\Mod{q}$ and $m\bar{m}\equiv 1 \Mod{N}$; the $(n-1)$-dimensional Kloosterman sum is defined by
\begin{equation}\label{eq:kloosterman-classical-def}
\begin{array}{l}\vspace{0.15in}
\displaystyle
\KL(x,y;q,c,d)\,=
\sum_{j=2}^{n-1}\sum_{\alpha_{j}\in \left(\Z/\left(\frac{qc_2 \cdots c_{n-j+1}}{d_{n-1}\cdots d_{i}}\right)\Z\right)^{\times}}
e\left((-1)^{n}\frac{x d_{n-1}\alpha_{n-1}}{q}\right)\hspace{0.7in}\\
\hfill\displaystyle\times\,e\left(\frac{d_{n-2}\alpha_{n-2}\bar{\alpha}_{n-1}}{\frac{qc_2} {d_{n-1}}}\right)\cdots e\left(\frac{d_{2}\alpha_{2}\bar{\alpha}_{3}}{\frac{qc_2\cdots c_{n-2}} {d_{n-1}\cdots d_{3}}}\right)
e\left( \frac{ y \bar{\alpha}_{2}}{\frac{qc_{2}\cdots c_{n-1} } {d_{n-1}\cdots d_{2}}}\right)
\end{array}
\end{equation}
for $x,y\in\Z$ and $d:=(d_{2},\ldots,d_{n-1})$; and the functions $\Bc_{\pi_\infty,\phi_{\infty}}$ and $\Bc_{\pi_p,\Phi_{p}^{a/\ell q}}$ are given explicitly in \eqref{eq:mellon-of-bessel-real} and \eqref{eq:mellon-of-bessel-p-adic}, respectively, whilst $\Phi_{p}^{a/\ell q}$ is defined in \eqref{eq:big-phi}.
\end{theorem}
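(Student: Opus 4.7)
The plan is to establish the adelic analogue of Theorem~\ref{thm:general-classical-intro} first, namely Theorem~\ref{thm:voronoi-general} for Whittaker functions on $\GL_{n}(\A_\Q)$, and then descend to the classical statement via Shintani's formula (Lemma~\ref{lem:shintani-classical}). The classical Fourier coefficient $A_{f}(m,c_{2},\ldots,c_{n-1})$ translates into a value of the global Whittaker function $W_{\varphi}$ of the newform $\varphi$ associated to $f$ at a diagonal torus element; the additive twist $e(am/\ell q)$ translates into a product of local additive characters $\psi_{p}^{a/\ell q}$ at primes $p\mid \ell q$; the test functions $\phi_{\infty},\phi_{p}$ are prescribed data at their respective places. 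The left-hand side thereby becomes a global Mellin-type pairing of $W_{\varphi}$ against a global function $\Psi=\prod_{v}\Psi_{v}$ on $\A^{\times}$ gathering the twist and test data, evaluated at torus elements $\diag(y,1,\ldots,1)\cdot t_{c}$ with $t_{c}$ encoding the frozen indices $c$.

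Next I would apply the product of local functional equations for $\GL_{n}\times\GL_{1}$, one at each place, rather than the global functional equation directly, since the ramified primes carry no canonical newvector in general. At the archimedean place the corresponding gamma factor is repackaged into the Bessel kernel $\Bc_{\pi_{\infty},\phi_{\infty}}$ of~\eqref{eq:mellon-of-bessel-real}; at each $p\mid MN$ the local functional equation on the Kirillov model defines the transform $\Bc_{\pi_{p},\Phi_{p}^{a/\ell q}}$ of~\eqref{eq:mellon-of-bessel-p-adic} with the prescribed test vector $\Phi_{p}^{a/\ell q}$ of~\eqref{eq:big-phi}; at unramified primes $p\mid q$ the Bessel transform is evaluated explicitly via the Shintani formula for the spherical Whittaker function. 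This last unramified computation, unwinding $W_{\pi_{p}}$ over $\GL_{n}(\Z_{p})$-orbit representatives of $\diag(m,1,\ldots,1)t_{c}$ twisted by the ramified additive character, is what produces the divisor chain $d_{n-1}\mid qc_{2}$, $d_{n-2}\mid qc_{2}c_{3}/d_{n-1}$, \ldots, together with the generalised Kloosterman sum~\eqref{eq:kloosterman-classical-def}. The permuted Fourier coefficient $A_{f}(d_{n-1},\ldots,d_{2},m)$ on the dual side reflects the long Weyl element governing the functional equation, while the character factor $\chi\bigl(\bar m\, qc_{2}\cdots c_{n-1}/(d_{n-1}\cdots d_{2})\bigr)^{-1}$ records the action of the level group on the shifted newvector.

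The principal obstacle is the collusion between level and modulus at primes $p\mid\gcd(M,N)$ combined with the absence of a usable Atkin--Lehner theory for $\GL_{n}$ when $n\geq 3$: at such $p$ one cannot decompose the local vector into canonical pieces compatible with the twist by $\psi_{p}^{a/\ell q}$. My workaround is to refuse any such decomposition and instead package the entire local contribution as the abstract local transform $\Bc_{\pi_{p},\Phi_{p}^{a/\ell q}}$, which depends only on the local Kirillov model of $\pi_{p}$ and on the prescribed $\Phi_{p}^{a/\ell q}$; the latter is engineered in~\eqref{eq:big-phi} precisely to absorb the additive twist. The remaining work is largely bookkeeping: one must verify that the global normalisation $\lambda_{\ell}=[\ell,N]\ell^{n-1}L^{n}$, the denominators $|m|^{(n-1)/2}$ and $d_{i}^{i(n-i)/2}$, and the arguments $(rm/\lambda_{\ell}q^{n})\prod_{i}d_{i}^{i}/c_{i}^{n-i}$ of the Bessel transforms all assemble consistently from the product of local scaling and conductor factors, tracking the congruence-subgroup volumes implicit in Shintani's formula and the local Haar measures. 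The deduction of the classical statement from its adelic Whittaker counterpart is then a mechanical unfolding.
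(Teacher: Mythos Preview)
Your overall strategy matches the paper's: specialise Theorem~\ref{thm:voronoi-general} with $\zeta_{p}=a/\ell q$, the shift $\xi$ encoding $c$, and $S=\{\infty\}\cup\{p:p\mid M\}$, $Q=\{p:p\mid N,\ p\nmid M\}$, then translate both sides into classical Fourier coefficients via Shintani's formula and Definition~\ref{def:classical-fourier-coeffs}. One difference in mechanism deserves mention. You propose to obtain the Kloosterman sum and the divisor chain at the unramified primes $p\mid qc_{2}\cdots c_{n-1}$ by evaluating a Bessel transform there. The paper does \emph{not} do this: those primes sit in the set $R$, disjoint from $Q\cup S$, and their contribution comes from computing the dual unipotent integral $\mathfrak{H}_{v}$ directly as a hyper-Kloosterman integral, using Stevens' theory and the recursion of \cite[Prop.~6.4--Cor.~6.7]{ichino-templier}; the divisor chain $d_{n-1}\mid qc_{2},\ d_{n-2}\mid qc_{2}c_{3}/d_{n-1},\ldots$ then emerges from a change of variables in the torus sum over $T_{R}^{1}$ dictated by the support of the spherical $\tilde{W}_{p}$ (Proposition~\ref{prop:shintani-formula}). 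Bessel transforms proper, i.e.\ the local functional equation, are invoked only at $v\in Q\cup S$. Your alternative route is plausible but would require an independent computation showing that the unramified Bessel transform with $\abs{\zeta_{p}}_{p}>1$ reproduces the hyper-Kloosterman structure; the paper sidesteps this by leaning on the ready-made Ichino--Templier/Stevens machinery, which is arguably the cleaner path.
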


The proof of Theorem \ref{thm:general-classical-intro} is detailed in \S \ref{eq:proof-of-main}. It is a specialisation of our more general summation formula in Theorem \ref{thm:voronoi-general}.

\begin{rem}[The $p$-adic Bessel transforms]

The functions $\Bc_{\pi_p,\Phi_{p}^{a/\ell q}}$ on $\Qpx$ are crucial as they allow the explicit evaluation of the dual terms in the Vorono\u{\i} formula at primes $p\mid N$. In this work we explicate these transforms. For particular choices of $f$, we show they are proportional to sums of twists of $\GL_{n}$-root numbers when the the level $N$ and modulus $\ell$ are jointly ramified $(\ell,N)>1$; see Corollary \ref{cor:supercuspidal-intro}.

\begin{itemize}

\item The term $\lambda_{\ell}=[\ell,N]\ell^{n-1}L^{n}$ is determined by a generic support condition Proposition, \ref{prop:bessel-support}, for $\Bc_{\pi_p,\Phi_{p}^{a/\ell q}}$. In special cases, this term and the $r\mid (MN)^{\infty}$-sum may be greatly improved; see Corollary \ref{cor:supercuspidal-intro}.

\item Upper bounds are sensitive to the test functions $\phi_{p}$ and the local representation $\pi_p$ attached to $\pi_p$. We shall see that the size of the functions $\Bc_{\pi_p,\Phi_{p}^{a/\ell q}}$ is related to sums of twisted $\GL_{n}$-epsilon constants.
\end{itemize}

\end{rem}

\begin{rem}[The real Bessel transforms]
If one chooses the support of $\phi_{\infty}$ to be contained in $\R_{>0}$ then Kowalski--Ricotta \cite[Prop.\ 3.5]{kowalski-ricotta} show that for any $A>0$ we have $\Bc_{\pi_\infty,\phi_{\infty}}(y)\ll y^{-A}$. (See Proposition \ref{prop:kowalski-ricotta} for further details.)
\end{rem}


\subsection{Refined summation formulae}\label{sec:refined-formula-intro}

Here we consider a certain family of Maa\ss\ forms $f$ by placing assumptions upon the local representations of $\pi_{f}=\otimes_{v}\pi_{v}$ at primes $v=p$ for which $p\mid N$.

\begin{ass}\label{ass:super-maass}
For each $p\mid N$, suppose that the twists of the local Euler factor $L(s,\chi\pi_{p})=1$, identically, for each character $\chi\colon\Qpx\rightarrow\Cx$. Moreover suppose that $\pi_p$ is twist minimal in the sense of Definition \ref{def:minimal}.
\end{ass}
The first part of this assumption is satisfied, for example, by all supercuspidal representations of $\GL_{n}(\Qp)$. A representation may be made twist minimal after twisting by a character. For example, a supercuspidal representation whose (log) conductor is indivisible by $n$ is always twist minimal by \cite[Prop.\ 2.2]{corbett-conductor}.

\subsubsection{An explicit factor at primes dividing the level}

\begin{cor}\label{cor:supercuspidal-intro}
Suppose, in addition to the hypotheses of Theorem \ref{thm:general-classical-intro}, that Assumption \ref{ass:super-maass} holds. Suppose too, for stylistic reasons\footnote{The formula is similar for general $\ell\mid N^{\infty}$; the $p$-adic Bessel transforms are more cumbersome for $v_{p}(\ell)\leq 1$ but are nevertheless given explicitly in Proposition \ref{prop:bessel-ox}.}, that $\ell$ is divisible by the squares of each of its prime divisors. Define the function
$$S_{f}(m;\frac{a}{\ell q})=\underset{\chi\Mod{\ell}}{{\sum}^{*}}\chi(-1)^{n-1}\chi(m\bar{a}q)\varepsilon(1/2,f\times \chi)\varepsilon(1/2,\chi^{-1})$$
where the notation $\sum^{*}$ indicates summation over just the primitive Dirichlet characters $\Mod{\ell}$ and $a\bar{a}\equiv 1 \Mod{\ell}$. Then we have the following refined Vorono\u{\i} summation formula:
\begin{equation*}
\begin{array}{r}\vspace{0.15in}
\displaystyle
\sum_{\substack{m\in\Z_{\neq 0}\\(m,N)=1}}
e\bigg(\frac{am}{\ell q}\bigg)
\frac{A_{f}(m,c_{2},\ldots,c_{n-1})}{\abs{m}^{\frac{n-1}{2}}}
\phi_{\infty}(m)\,=\, \frac{[N,\ell^n]^{\frac{n-2}{2}}\ell^{-1/2}q^{n-2}}{\prod_{p\mid N}(1-p^{-1})} S_{f}(m;\frac{a}{\ell q})
\hspace{0.3in}
\\\vspace{0.15in}
\displaystyle
\times\,\prod_{i=2}^{n-1}\abs{c_{i}}^{(n-i)(\frac{i}{2}-1)}\,
\sum_{\substack{m\in\Z_{\neq 0}\\(m,N)=1}}
\sum_{d_{n-1}\mid qc_{2}}\, \sum_{d_{n-2}\mid \frac{q c_{2}c_3 }{d_{n-1}}}\cdots \sum_{d_2\mid \frac{q c_{2} \cdots c_{n-1}}{d_{n-1}\cdots d_{3}}}\KL\left(\overline{a[N,\ell^{n}]}\ell ,m;q,c,d\right)\\
\displaystyle\times\,
\chi\left(\bar{m}\frac{qc_{2}\cdots c_{n-1}}{d_{n-1}\cdots d_{2}}\right)^{-1}\frac{A_{f}(d_{n-1},\ldots,d_{2},m)}{\abs{m}^{\frac{n-1}{2}}\prod_{i=2}^{n-1}d_{i}^{\frac{i(n-i)}{2}}}\Bc_{\pi_\infty,\phi_{\infty}}\left(\frac{m}{[N,\ell^{n}]q^{n}} \prod_{i=2}^{n-1}\frac{ d_{i}^{i}}{c_{i}^{n-i}}\right).
\end{array}
\end{equation*}
\end{cor}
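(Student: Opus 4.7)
The plan is to deduce Corollary \ref{cor:supercuspidal-intro} from Theorem \ref{thm:general-classical-intro} by a careful choice of local test functions, followed by an explicit evaluation of the $p$-adic Bessel transforms at every prime $p\mid N$, and then by reassembly of the resulting local epsilon factors into the global object $S_{f}(m;a/\ell q)$. Concretely, I would first apply Theorem \ref{thm:general-classical-intro} with $M$ equal to the squarefree radical of $N$ and with $\phi_{p}=\mathbf{1}_{\Zpx}$ at each $p\mid M$. This enforces the $(m,N)=1$ restriction on the left-hand side and simultaneously fixes the auxiliary functions $\Phi_{p}^{a/\ell q}$ from \eqref{eq:big-phi} in a shape amenable to direct computation. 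The coprimality conditions $(c_{2}\cdots c_{n-1},N)=1$ and $(q,N)=1$ are already built into the hypotheses.

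Next, at each $p\mid N$, Assumption \ref{ass:super-maass} reduces the local functional equation for $\pi_{p}\times\chi_{p}$ to a pure epsilon-factor identity since $L(s,\chi_{p}\pi_{p})\equiv 1$ for every character $\chi_{p}$, while twist-minimality keeps the conductor of $\pi_{p}\otimes\chi_{p}$ stable as $\chi_{p}$ varies. Under the standing hypothesis $v_{p}(\ell)\geq 2$, Proposition \ref{prop:bessel-ox} then yields a closed-form evaluation of $\Bc_{\pi_{p},\Phi_{p}^{a/\ell q}}$ as a finite sum of local epsilon factors $\varepsilon(\tfrac{1}{2},\chi_{p}\pi_{p})\varepsilon(\tfrac{1}{2},\chi_{p}^{-1})$ weighted by primitive characters of $\Zpx$ of conductor $p^{v_{p}(\ell)}$; moreover the transform is supported on a single power of $p$ dictated by $v_{p}(\ell)$, $v_{p}(N)$ and the conductor of $\pi_{p}$. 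Taking the product across $p\mid N$ collapses the $r\mid(MN)^{\infty}$ sum in Theorem \ref{thm:general-classical-intro} down to the single term $r=1$, and simultaneously replaces the conservative normalisation $\lambda_{\ell}=[\ell,N]\ell^{n-1}L^{n}$ by the sharper quantity $[N,\ell^{n}]$ claimed in the corollary.

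Finally, I would reassemble these local pieces into their global counterparts. The product $\prod_{p\mid N}\varepsilon(\tfrac{1}{2},\chi_{p}\pi_{p})\varepsilon(\tfrac{1}{2},\chi_{p}^{-1})$, combined with the archimedean sign $\chi(-1)^{n-1}$ and the unramified local factors at $p\nmid N$, recombines into the completed global epsilon factors $\varepsilon(\tfrac{1}{2},f\times\chi)\varepsilon(\tfrac{1}{2},\chi^{-1})$ via the usual factorisation; summing over primitive characters modulo $\ell$ then yields $S_{f}(m;a/\ell q)$. The remaining constant $[N,\ell^{n}]^{(n-2)/2}\ell^{-1/2}q^{n-2}\prod_{p\mid N}(1-p^{-1})^{-1}$ accumulates from the prefactor $q^{n-2}\prod_{i}|c_{i}|^{(n-i)(i/2-1)}$ of Theorem \ref{thm:general-classical-intro} together with the Gauss-sum and Haar-measure constants collected at each $p\mid N$, with the factor $\prod_{p\mid N}(1-p^{-1})^{-1}$ in particular arising from the $p$-adic measure normalisations inherent in \eqref{eq:mellon-of-bessel-p-adic}. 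The main obstacle is the middle step: obtaining the claimed clean formula for $\Bc_{\pi_{p},\Phi_{p}^{a/\ell q}}$ with the precise support and normalisation. This rests on a careful manipulation of the local functional equation under twist-minimality, together with explicit control of the Kirillov-model normalisations encoded in \eqref{eq:mellon-of-bessel-p-adic} and \eqref{eq:big-phi}; everything downstream is comparatively routine bookkeeping of $p$-adic exponents and global epsilon-factor reassembly.
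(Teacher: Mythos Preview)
Your proposal is correct and follows essentially the same approach as the paper: apply Theorem \ref{thm:general-classical-intro} with $\phi_{p}=\Char_{\Zpx}$ at each $p\mid N$, then invoke Proposition \ref{prop:bessel-ox} under Assumption \ref{ass:super-maass} and the hypothesis $v_{p}(\ell)\geq 2$ to evaluate each $\Bc_{\pi_{p},\Phi_{p}^{a/\ell q}}$ explicitly, collapse the $r$-sum, and reassemble the local epsilon factors into $S_{f}(m;a/\ell q)$. One minor imprecision: twist-minimality does not keep $a(\chi_{p}\pi_{p})$ constant but rather forces $a(\chi_{p}\pi_{p})=\max\{a(\pi_{p}),\,na(\chi_{p})\}$ (via \cite[Prop.\ 2.2]{corbett-conductor}), which is what actually pins down the support at $\abs{y}_{p}=q_{p}^{\max\{a(\pi_{p}),-nv_{p}(\ell)\}}$ and produces the exponent $[N,\ell^{n}]$.
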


\begin{proof}
This formula follows from the explicit computation of the Bessel transforms $\Bc_{\pi_p,\Phi_{p}^{a/\ell q}}$ under the refined hypotheses. This is executed in \S \ref{sec:bessel-special-cases}, specifically in Proposition \ref{prop:bessel-ox}.
\end{proof}

\begin{rem}
The occurence of $S_{f}(m;\frac{a}{\ell q})$ highlights the key feature of our method: this term should be thought of as a ``ramified'' Kloosterman sum. We arrived at this expression via the local functional equation and a Bessel transform. However, should one desire to unpick further, the sum of epsilon factors may be shown to equal a Kloosterman sum dependant on the inducing data of the supercuspidal representation $\pi_p$. The explication of these terms is a deep problem in the corresponding $p$-adic representation theory. At present, it seems only possible to proceed in a case-by-case fashion. A uniform treatment of such terms would be an impressive result.
\end{rem}

\subsubsection{Voronoi summation in arithmetic progressions}

An amusing variant of the main formula allows us to restrict the summation to an arithmetic progression. The formula we present here is well suited to applications such as those considered in \cite{kowalski-ricotta}. The advantage is given by the $p$-adic properties of the local Bessel functions.

\begin{cor}\label{cor:ap-intro}

With the hypotheses of Theorem \ref{thm:general-classical-intro} and Assumption \ref{ass:super-maass}, fix an integer $M\geq 1$ and for each $p\mid M$ and define $\phi_{p}=\Char_{1+M\Z_{p}}\in\Ccinf(\Qpx)$. Once again assume that $\ell$ is divisible by the squares of each of its prime divisors. Then
\begin{equation*}
\begin{array}{l}\vspace{0.15in}
\displaystyle
\sum_{\substack{m\in\Z_{\neq 0}\\(m,N)=1\\m\equiv 1\Mod{M}}}
e\bigg(\frac{am}{\ell q}\bigg)
\frac{A_{f}(m,c_{2},\ldots,c_{n-1})}{\abs{m}^{\frac{n-1}{2}}}
\phi_{\infty}(m)={q^{n-2}\prod_{i=2}^{n-1}\abs{c_{i}}^{(n-i)(\frac{i}{2}-1)}}\sum_{\substack{m\in\Z_{\neq 0}\\(m,MN)=1}}
\\\vspace{0.15in}
\displaystyle
\times\,\sum_{r\mid [M,\ell]}
\sum_{d_{n-1}\mid qc_{2}}\, \sum_{d_{n-2}\mid \frac{q c_{2}c_3 }{d_{n-1}}}\cdots \sum_{d_2\mid \frac{q c_{2} \cdots c_{n-1}}{d_{n-1}\cdots d_{3}}}\KL\left(\overline{ar}\ell ,m;q,c,d\right)\,\chi\left(\bar{m}\frac{qc_{2}\cdots c_{n-1}}{d_{n-1}\cdots d_{2}}\right)^{-1}
\\
\displaystyle\times\,
\frac{A_{f}(d_{n-1},\ldots,d_{2},m)}{\abs{m}^{\frac{n-1}{2}}\prod_{i=2}^{n-1}d_{i}^{\frac{i(n-i)}{2}}}\Bc_{\pi_\infty,\phi_{\infty}}\left(\frac{ m}{rq^{n}} \prod_{i=2}^{n-1}\frac{ d_{i}^{i}}{c_{i}^{n-i}}\right)\prod_{p\mid MN}\Bc_{\pi_p,\Phi_{p}^{a/\ell q}}\left(\frac{m}{rq^{n}} \prod_{i=2}^{n-1}\frac{d_{i}^{i}}{c_{i}^{n-i}}\right)
\end{array}
\end{equation*}
where the transforms $\Bc_{\pi_p,\Phi_{p}^{a/\ell q}}$ are described in Proposition \ref{prop:bessel-ap}, taking $k=v_p(M)$.
\end{cor}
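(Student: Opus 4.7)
The approach is to specialize Theorem \ref{thm:general-classical-intro} to the test functions $\phi_{p}=\Char_{1+M\Z_{p}}$ and then invoke the explicit local Bessel transforms from \S \ref{sec:bessel-special-cases}, specifically Propositions \ref{prop:bessel-ox} and \ref{prop:bessel-ap}. The argument proceeds in three stages.

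First, I would handle the left-hand side restriction. Since $1+M\Z_{p}=1+p^{v_{p}(M)}\Z_{p}$, the Chinese remainder theorem yields
\[
\prod_{p\mid M}\phi_{p}(m)=\prod_{p\mid M}\Char_{1+p^{v_{p}(M)}\Z_{p}}(m)=\begin{cases}1&\text{if }m\equiv 1\Mod{M},\\ 0&\otherwise,\end{cases}
\]
which immediately cuts the unrestricted $m$-sum in Theorem \ref{thm:general-classical-intro} down to the arithmetic progression $m\equiv 1\Mod{M}$. The extra coprimality $(m,N)=1$ appearing in the corollary's left-hand side is automatic: under Assumption \ref{ass:super-maass} the local Euler factors $L(s,\chi\pi_{p})$ are identically $1$ at every $p\mid N$, forcing the local Hecke eigenvalues at ramified prime powers to vanish and hence $A_{f}(m,c_{2},\ldots,c_{n-1})=0$ whenever some $p\mid N$ divides $m$.

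Second, I would explicate the local Bessel transforms $\Bc_{\pi_{p},\Phi_{p}^{a/\ell q}}$ at each prime $p\mid MN$. At primes $p\mid N$ the twist-minimal, supercuspidal-type hypothesis of Assumption \ref{ass:super-maass} applies, so Proposition \ref{prop:bessel-ox} gives the transform explicitly; just as in the proof of Corollary \ref{cor:supercuspidal-intro}, the tight support condition there collapses the local $r$-part of the sum $r\mid(MN)^{\infty}$ and absorbs the normalization $\lambda_{\ell}=[\ell,N]\ell^{n-1}L^{n}$ from the general formula. At primes $p\mid M$ with $p\nmid N$ the representation $\pi_{p}$ is unramified, the local test function is $\Char_{1+p^{v_{p}(M)}\Z_{p}}$, and Proposition \ref{prop:bessel-ap} with $k=v_{p}(M)$ supplies both the explicit form and a support condition confining the $p$-part of $r$.

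Third, I would assemble the two families of support conditions. Together they force $r\mid[M,\ell]$, replacing the full range $r\mid(MN)^{\infty}$ of Theorem \ref{thm:general-classical-intro}; simultaneously the supercuspidal simplification absorbs $\lambda_{\ell}$ into the re-parametrised $r$, so the Kloosterman argument $\overline{a\lambda_{\ell}}\ell r$ becomes $\overline{ar}\ell$ and the Bessel argument $rm/(\lambda_{\ell}q^{n})\cdot\prod d_{i}^{i}c_{i}^{-(n-i)}$ becomes $m/(rq^{n})\cdot\prod d_{i}^{i}c_{i}^{-(n-i)}$. The hard part is the bookkeeping: one must check carefully, at each prime $p\mid MN$ and in particular when $p\mid(\ell,N)$ or $p\mid(\ell,M)$, that the support and normalization from Propositions \ref{prop:bessel-ox} and \ref{prop:bessel-ap} combine into the clean unified form claimed. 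Once this accounting is completed the remaining manipulation is purely algebraic reshuffling.
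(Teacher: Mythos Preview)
Your proposal is correct and follows the approach the paper itself indicates. The paper does not supply a separate proof for this corollary; it simply points to Proposition~\ref{prop:bessel-ap} (with $k=v_p(M)$) as the governing local computation, leaving the reader to specialise Theorem~\ref{thm:general-classical-intro} accordingly. Your three-stage outline (Chinese remainder for the left-hand side, the coprimality $(m,N)=1$ coming from $L(s,\chi\pi_p)=1$ at $p\mid N$, and then the combination of Propositions~\ref{prop:bessel-ox} and~\ref{prop:bessel-ap} to pin down the support $r\mid[M,\ell]$ and absorb $\lambda_\ell$) is exactly this specialisation carried out in detail, and is more explicit than what the paper records.
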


\section{Background representation theory and notation}\label{sec:background}

Here we give a bite-sized recap of the theory of automorphic representations of $\GL(n)$. The main purpose of this section is to fix notation for \S \ref{sec:general}, and may be referred back to should confusion present itself. Its content may be sidestepped without causing injury to our discourse on Vorono\u{\i} summation.

\subsection{Notable matrix groups}\label{sec:matrix-groups}

For a commutative ring $R$ (with unit $1\in R$), let us introduce notation for certain subgroups and elements of $\GL_{n}(R)$. Let $B_{n}(R)=T_{n}(R)\ltimes U_{n}(R)$ denote the standard Borel subgroup, consisting of upper triangular matrices in $\GL_{n}(R)$, given by the semi-direct product of the maximal torus of diagonal matrices $T_{n}(R)$ and the subgroup $U_{n}(R)$ of upper triangular matrices whose $n$ eigenvalues are all equal to $1$. Denote the centre of $\GL_{n}(R)$ by $Z(R)\isom R^{\times}$, acting on $\GL_{n}(R)$ by scalar multiplication.

Let $1_{n}$ denote the $n\times n$ identity matrix. Let $w=w_{n}$ be the longest Weyl element of $\GL_{n}(R)$, defined recursively by $w_{n}=\smat{}{w_{n-1}}{1}{}$ and $w_{1}=1$. Define a second Weyl element by $w'=\smat{1}{}{}{w_{n-1}}$. We assign specialist notation to the matrices
\begin{equation}\label{eq:matrices-def}
a(y):=\begin{pmatrix}
y&\\
&1_{n-1}
\end{pmatrix}\in T_{n}(R)\andy
n(x):=\begin{pmatrix}
1&x&\\
&1&\\
&&1_{n-2}
\end{pmatrix}\in U_{n}(R).
\end{equation}
For $g\in \GL_{n}(R)$ we consider the involution $g^{\iota}:=\tran{g}{-1}$. We have $n(x)^{\iota}=\tran{n(x)}{-1}$ and $a(y)^{\iota}=a(y^{-1})$.
For a function $f\colon \GL_{n}(R)\rightarrow\C$, denote the action of the right regular action of $g\in \GL_{n}(R)$ by $\rho(g)f(x)=f(xg)$ for $x\in \GL_{n}(R)$.

\subsection{Local and global fields}\label{sec:local-global-fields}

Let $F$ be a number field. Let $\A_{F}$ denote the ring of $F$-adeles and $\of_{F}$ ring of algebraic integers contained in $F$. At each place $v$ of $F$ let $F_{v}$ denote the completion of $F$ at $v$. Let $S_{\infty}$ denote the set of archimedean places of $F$. Suppose $v\not\in S_{\infty}$. Then denote by $\of_{v}$ the ring of integers of $F_{v}$; $\pf_{v}$ the maximal (prime) ideal of $\of_{v}$; $\varpi_{v}$ a choice of uniformising parameter, that is a generator of $\pf_{v}$; and $q_{v}=\# (\of_{v}/\pf_{v})$. Let $\abs{\,\cdot\,}_{v}$ denote the absolute value on $F_{v}$, normalised so that $\abs{\varpi_{v}}_{v}=q_{v}^{-1}$. The place $v$ itself denotes the discrete valuation on $F_{v}$, satisfying $\abs{x}_{v} = q_{v}^{-v(x)}$ for $x\in F$. At real places $v\in S_{\infty}$ set $\abs{x}_{v}=\sgn(x)x$ for $x\in \R$ and at complex places $v\in S_{\infty}$ set $\abs{z}_{v}=z\bar{z}$ for $z\in\C$.

\subsection{Additive characters}\label{sec:additive-characters}

We say that an (additive) character $\psi=\otimes_{v}\psi_{v}$ of $\A_{F}/F$ is \textit{unramified} if $\of_{v}\subset \ker_{F_{v}}(\psi_{v})$ for each $v\not\in S_{\infty}$. We henceforth assume such a choice of $\psi$.
Note that the dual group of $\A_{F}/F$ is identified by the set $\{x\mapsto\psi(ax) : a\in F\}$. We shall abuse notation by extending $\psi$ to a character of each subgroup $H\leq U_{n}(\A_{F})$ as follows: if $h\in H$ is given by $h=(h_{i,j})$ then define
\begin{equation}\label{eq:add-unipotent-char}
\psi(h)=\psi(h_{1,2}+h_{2,3}+\cdots+h_{n-1,n}).
\end{equation}
By the $F$-invariance of $\psi$ we also have $(H\cap U_{n}(F))\subset \ker_{H} (\psi)$.


\subsection{The Whittaker model}\label{sec:whittaker-model}

let $\pi=\otimes_{v}\pi_{v}$ be an irreducible, cuspidal automorphic representation of $\GL_{n}(\A_{F})$, realised in a space of automorphic forms $\V_{\pi}$. There is a $\GL_{n}(\A_{F})$-intertwining map from $\V_{\pi}$ to the space of functions $W\colon \GL_{n}(\A_{F})\rightarrow \C$ that satisfy $W(ug)=\psi(u)W(g)$ for each $u\in U_{n}(\A_{F})$ carrying the right regular representation $\rho$ of $\GL_{n}(\A_{F})$. Explicitly, one maps $\varphi\in\V_{\pi}$ to the element
\begin{equation}\label{eq:whittaker-isomorphism}
W_{\varphi}\colon g\longmapsto \int_{U_{n}(F)\bs U_{n}(\A)}\varphi(ug)\overline{\psi(u)} du,
\end{equation}
where we choose $du$ to be the invariant probability measure on $U_{n}(F)\bs U_{n}(\A)$. Let $\Wh(\pi,\psi)$ denote the image of $\V_{\pi}$ under \eqref{eq:whittaker-isomorphism}, another irreducible $\GL_{n}(\A_{F})$-module. One calls $\Wh(\pi,\psi)$ the $\psi$-\textit{Whittaker model} of $\pi$. This generalises the classical realisation of Fourier coefficients.

Similarly, at each place $v$ of $F$ there is a notion of a $\psi_{v}$-Whittaker model; this is the space $\Wh(\pi_{v},\psi_{v})$ of functions $W\colon \GL_{n}(F_{v})\rightarrow \C$ satisfying $W_{v}(ug)=\psi_{v}(u)W_{v}(g)$ for each $u\in U_{n}(F_v)$. It is again a $\GL_{n}(F_{v})$-module under $\rho$ and one has a non-zero module homomorphism $\pi_{v}\isom \Wh(\pi_{v},\psi_{v})$. The uniqueness of Whittaker models
\footnote{Global multiplicity one for $\GL_{n}$ was proved by Shalika \cite[Theorem 5.5]{shalika-mult-one}. The local multiplicity one theorem was also completed by Shalika \cite[Theorem 3.1]{shalika-mult-one}, who showed that the non-archimedean result of Gelfand--Ka\v{z}dan \cite{gelfand-kazdan} was true for archimedean places too.}
implies that $\Wh(\pi,\psi)$ is given by the restricted tensor product $\Wh(\pi,\psi)=\otimes_{v}'\Wh(\pi_{v},\psi_{v})$. This equality is significant as it identifies $\Wh(\pi,\psi)$ as a space of \textit{factorisable} functions on $\GL_{n}(\A_{F})$.

It shall be of use to speculate on the support of non-archimedean Whittaker functions. The following lemma gives a first observation. In the unramified setting, we can say much more; see \S \ref{sec:shintani}.

\begin{lem}\label{lem:support-whittaker}
Suppose $v\not\in S_{\infty}$ and let $W_{v}\in \Wh(\pi_{v},\psi_{v})$ such that $W_{v}$ is $U_{n}(\of_{v})$-fixed.\footnote{For example, this condition is satisfied by all new and old forms.} Then, $W_{v}(a(y))=0$ for all $y\in \Fx_{v}$ with $\abs{y}_{v}>1$.
\end{lem}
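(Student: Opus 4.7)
My plan is to exploit the tension between the $U_n(\of_v)$-invariance of $W_v$ and the Whittaker transformation law, using conjugation by $a(y)$ to rescale the unipotent argument.

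Concretely, for any $x \in \of_v$ and any $y \in F_v^\times$, one has the matrix identity $a(y) n(x) = n(yx) a(y)$, since $a(y) = \diag(y,1,\ldots,1)$ acts on the $(1,2)$-entry by multiplication by $y$. Using the $U_n(\of_v)$-invariance in the first equality, the identity above, and the Whittaker transformation law in the third, I would write
\begin{equation*}
W_v(a(y)) = W_v(a(y) n(x)) = W_v(n(yx) a(y)) = \psi_v(yx)\, W_v(a(y)).
\end{equation*}
Hence $(1 - \psi_v(yx)) W_v(a(y)) = 0$ for every $x \in \of_v$.

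It therefore suffices to produce some $x \in \of_v$ with $\psi_v(yx) \neq 1$ whenever $|y|_v > 1$. Since $\psi$ arises from a nontrivial character of $\A_F/F$, its local component $\psi_v$ has an exact conductor, and up to the standard normalisation the assumption $\of_v \subset \ker(\psi_v)$ means $\psi_v$ is trivial on $\of_v$ but nontrivial on $\pf_v^{-1}$. If $|y|_v > 1$ then $v(y) \leq -1$, so $\varpi_v^{-1} \in y \cdot \of_v$; picking $x = \varpi_v^{-v(y)-1} \in \of_v$ gives $yx \in \varpi_v^{-1}\of_v^\times$, on which $\psi_v$ is nontrivial. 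This forces $W_v(a(y)) = 0$, completing the argument.

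The only mild subtlety is the conductor of $\psi_v$: the bare hypothesis in \S\ref{sec:additive-characters} only gives $\of_v \subset \ker(\psi_v)$, but since $\psi$ descends from a nontrivial character of $\A_F/F$ the local conductor is a genuine fractional ideal and (after the usual normalisation) equals $\of_v$ at all but finitely many places. If one wishes to avoid any normalisation assumption, the same proof goes through verbatim with $|y|_v > 1$ replaced by the sharper bound dictated by the precise conductor of $\psi_v$; no other step changes.
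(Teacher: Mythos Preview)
Your argument is essentially identical to the paper's: use right $U_n(\of_v)$-invariance, the conjugation identity $a(y)n(x)=n(xy)a(y)$, and the Whittaker transformation law to deduce $W_v(a(y))=\psi_v(xy)W_v(a(y))$ for all $x\in\of_v$, then force $W_v(a(y))=0$ by choosing $x$ with $\psi_v(xy)\neq 1$. One small correction: your explicit choice $x=\varpi_v^{-v(y)-1}$ only guarantees $yx\in\varpi_v^{-1}\of_v^\times$, and while $\psi_v$ is nontrivial on that \emph{set} (assuming conductor $\of_v$), it need not be nontrivial at that particular \emph{element} when the residue field is not prime; the clean statement, as in the paper, is simply that $y\of_v\supseteq\pf_v^{-1}$, so $x\mapsto\psi_v(xy)$ is a nontrivial character of $\of_v$ and hence takes a value $\neq 1$ somewhere. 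Your closing remark about the conductor of $\psi_v$ is well taken and more careful than the paper, which tacitly assumes the conductor is exactly $\of_v$.
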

\begin{proof}
Let $y\in \Fx_{v}$. Then, for all $x\in\of_{v}$ we have
\begin{equation*}
W_{v}(a(y)) = W(a(y)n(x)) = W(n(xy)a(y)) = \psi_{v}(xy)W(a(y)).
\end{equation*}
If $\abs{y}_{v}>1$, we can always find $x\in \of_{v}$ such that $\psi_{v}(xy) \neq 1$.
\end{proof}





\subsection{The Kirilov model}

The classical results of Gelfand--Ka\v{z}dan \cite{gelfand-kazdan}, if $v$ is non-archimedean, and Jacquet--Shalika \cite[Prop.\ 3.8]{jacquet-shalika-ep-1} imply that it is enough to consider the functions $W_{v}\in\Wh(\pi_{v},\psi_{v})$ restricted to the `mirabolic subgroup' $P_{n}(F_{v})$ of $\GL_{n}(F_{v})$ -- the stabiliser of $(0,\ldots,0,1)$ on the right -- to carry an irreducible representation of $\GL_n(F_{v})$ isomorphic to $\pi_{v}$: this space of vectors is known as the \textit{Kirilov model}.

\begin{prop}\label{prop:kirilov-schwartz}
Let $v$ be a place of $F$. Then the space of functions on $P_{n}(F_{v})$ given by $ W_{v}\vert_{P_{n}(F_{v})}$ for sum $W_{v}\in\Wh(\pi_{v},\psi_{v})$ contains the entire space of compactly supported Bruhat--Schwartz functions $\Phi$ on $P_{n}(F_v)$ such that $\Phi\left(\smat{n}{}{}{1}p\right)=\psi_{v}(n)\Phi(p)$ for each $n\in U_{n-1}(F_{v})$ and $p\in P_{n}(F_{v})$.
\end{prop}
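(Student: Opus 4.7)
The plan is to treat this as a consequence of the standard theory of the Kirillov model, which breaks naturally into an algebraic (non-archimedean) and an analytic (archimedean) case. I would begin by carefully setting up both sides as $P_n(F_v)$-modules, and then invoke the cited work of Gelfand--Ka\v{z}dan and Jacquet--Shalika in each case.

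First I would fix notation. Let $V := \{W_v\vert_{P_n(F_v)} : W_v \in \Wh(\pi_v,\psi_v)\}$. By construction every element of $V$ satisfies the covariance $W_v(up) = \psi_v(u)W_v(p)$ for $u \in U_n(F_v)$, in particular for $u = \smat{n}{}{}{1}$ with $n \in U_{n-1}(F_v)$; moreover $V$ is stable under right translation by $P_n(F_v)$ because $U_n(F_v) \subset P_n(F_v)$ is normal in the subgroup generated by left-$U_n$ and right-$P_n$ translations, and the covariance survives. Next, I would identify the target space $V_0$ of compactly supported Bruhat--Schwartz functions with the stated transformation law as the representation space of the compactly induced representation $\tau := \mathrm{c\text{-}ind}_{U_n(F_v)}^{P_n(F_v)}(\psi_v)$. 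This is a classical computation: the mirabolic $P_n(F_v)$ has open $U_n(F_v)$-cosets parametrised by $F_v^{n-1} \smallsetminus \{0\}$ via a choice of section, and $\tau$ is known to be irreducible as a $P_n(F_v)$-module by Bernstein--Zelevinsky (non-archimedean) or its archimedean analogue.

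The core step is to show $V \supseteq V_0$, equivalently to embed $\tau$ into the restriction of $\pi_v$ to $P_n(F_v)$. In the non-archimedean case I would invoke Gelfand--Ka\v{z}dan together with the Bernstein--Zelevinsky theory of derivatives: for any irreducible generic representation the top derivative $\pi_v^{(n)}$ is the trivial representation of $\GL_0 = 1$, so by Frobenius reciprocity $\tau$ embeds in $\pi_v\vert_{P_n(F_v)}$. By the uniqueness of the Whittaker model, this embedding is realised precisely by $W_v \mapsto W_v\vert_{P_n(F_v)}$, yielding $V \supseteq V_0$. In the archimedean case I would use the explicit construction of \cite{jacquet-shalika-ep-1}: given any $\Phi \in V_0$, one produces a smooth vector $\varphi \in \pi_v$ (of moderate growth) whose associated Whittaker function restricts to $\Phi$ on $P_n(F_v)$, using averaging integrals against a bump function supported in a small neighbourhood of $1$ in $P_n(F_v)$ and smoothness of the representation.

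The main obstacle is the archimedean step: the Bruhat--Schwartz topology at an archimedean place is infinite-dimensional and one must verify that the averaging construction genuinely produces smooth vectors of moderate growth whose restriction to $P_n(F_v)$ has the prescribed compactly supported behaviour. In the $p$-adic case this is trivialised by the locally constant topology, where one may directly construct $W_v$ as a finite sum of translates of any single compactly supported test vector in $\tau$. Once the embedding is produced in each case, the stated inclusion follows immediately, and thus restricting from the global Whittaker model (which factors as a restricted tensor product) realises all local compactly supported Schwartz data as required for subsequent applications to Vorono\u{\i} summation.
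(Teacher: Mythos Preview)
The paper does not actually give a proof of this proposition: it is stated as a known result, with the preceding paragraph attributing it to Gelfand--Ka\v{z}dan in the non-archimedean case and to Jacquet--Shalika \cite[Prop.\ 3.8]{jacquet-shalika-ep-1} in the archimedean case. Your sketch invokes precisely these same sources and is consistent with the paper's (implicit) argument, so at the level of approach there is no divergence.

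That said, a couple of the details you add are not quite right and would need correction if you wanted this to stand as an actual proof. The quotient $U_n(F_v)\backslash P_n(F_v)$ is not parametrised by $F_v^{n-1}\smallsetminus\{0\}$; since $P_n \cong \GL_{n-1}\ltimes F_v^{n-1}$ and $U_n = U_{n-1}\ltimes F_v^{n-1}$, one has $U_n\backslash P_n \cong U_{n-1}\backslash \GL_{n-1}$. The irreducibility of $\tau = \mathrm{c\text{-}ind}_{U_n}^{P_n}\psi_v$ is indeed the relevant fact (this is Gelfand--Ka\v{z}dan's ``non-degenerate'' representation), but it is not a consequence of the coset description you give. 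Also, in the non-archimedean case the cleanest route is not via the top derivative and Frobenius reciprocity but rather the direct statement that the Kirillov model of any irreducible generic $\pi_v$ contains $\tau$ as a subrepresentation (Bernstein--Zelevinsky), with the quotient governed by the lower derivatives; your formulation via $\pi_v^{(n)}$ being trivial is correct in spirit but the reciprocity you invoke needs the full derivative filtration to produce an embedding rather than just a nonzero map. None of this affects the conclusion, which is standard and correctly attributed.
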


\begin{rem}
In the present article, it suffices to consider just those Bruhat--Schwartz functions on $P_{n}(F_{v})$ with support on the matrices $a(y)$ for $y\in \Fx_{v}$.
\end{rem}

\begin{rem}
If $v$ is non-archimedean and $\pi_{v}$ is supercuspidal, then the space of functions on $P_{n}(F_{v})$ given by $ W_{v}\vert_{P_{n}(F_{v})}$ is \textit{precisely} the space of locally constant, compactly supported functions $\Phi$ described in Proposition \ref{prop:kirilov-schwartz}. In general, the co-dimension of these spaces is at most $n$.
\end{rem}

\subsection{Spherical Whittaker functionals}\label{sec:shintani}

In \cite{shintani}, Shintani evaluated the (spherical) Whittaker functionals on $\GL_{n}(F_{v})$ in terms of certain polynomials of the inducing data. Let $v$ be a non-archimedean place of $v$ and suppose $\pi_{v}$ is unramified\footnote{This criterion includes all but finitely many places of $F$.}; that is,
\begin{equation}\label{eq:spherical-prince}
\pi_{v}=\Ind_{B_{n}(F_{v})}^{\GL_{n}(F_{v})}(\mu_{1}\otimes\cdots\otimes\mu_{n})
\end{equation}
for unramified characters $\mu_{i}\colon\Fx\rightarrow\Cx$ for $i=1,\ldots,n$. Such characters are determined by their value on $\varpi_{v}$ and, in turn, $\pi_{v}$ is completely determined by the $n$ complex numbers (or `Satake parameters') $\mu_{i}(\varpi_{v})$ for $i=1,\ldots, n$.

Let $W^{\circ}_{v}\in\Wh(\pi_{v},\psi_{v})$ denote the unique $\GL_{n}(\of_{v})$-fixed vector with $W^{\circ}_{v}(1)=1$. Then, by the Iwasawa decomposition, $W^{\circ}_{v}$ is completely determined by its values on $T_{n}(F_{v})/T_{n}(\of_{v})$. These values are given in terms of a \textit{Schur polynomial}\footnote{It is of no coincidence that Schur polynomials appear as characters of the (finite-dimensional) irreducible representations of $\GL_{n}(\C)$ (see \cite[Chapter 6]{fultonrep} for instance).}, to which end we define the function
\begin{equation}\label{eq:schur-poly}
s_{\lambda}(t_{1},\ldots,t_{n}):=\prod_{1\leq i<j\leq n}(t_{i}-t_{j})^{-1}\,\det\left(
\begin{matrix}
t_{1}^{\lambda_{1}+n-1}&t_{2}^{\lambda_{1}+n-1}&\cdots&t_{n}^{\lambda_{1}+n-1}\\
t_{1}^{\lambda_{2}+n-2}&t_{2}^{\lambda_{2}+n-2}&\cdots&t_{n}^{\lambda_{2}+n-2}\\
\vdots&\vdots&\ddots&\vdots\\
t_{1}^{\lambda_{n}}&t_{2}^{\lambda_{n}}&\cdots&t_{n}^{\lambda_{n}}\\
\end{matrix}
\right),
\end{equation}
evaluated on a toral element $t=\diag(t_{1},\ldots,t_{n})\in\GL_{n}(\C)$ and indexed by a partition $\lambda=(\lambda_{1},\ldots,\lambda_{n})\in\Z^{n}$ satisfying
\begin{equation}\label{eq:inequality-cond}
\lambda_{1}\geq \lambda_{2}\geq \cdots\geq \lambda_{n}.
\end{equation}

\begin{prop}[Shintani's formula \cite{shintani}]\label{prop:shintani-formula}
Let $\lambda=(\lambda_{1},\ldots,\lambda_{n})\in\Z^{n}$ and consider the element $\varpi_{v}^{\lambda}:=\diag(\varpi_{v}^{\lambda_{1}},\ldots,\varpi_{v}^{\lambda_{n}})\in T_{n}(F_{v})$. Then $W^{\circ}_{v}(\varpi_{v}^{\lambda})=0$ unless $\lambda$ satisfies \eqref{eq:inequality-cond}, in which case
\begin{equation*}
W^{\circ}_{v}(\varpi_{v}^{\lambda}) = q_{v}^{\sum_{i=1}^{n}\lambda_{i}\left(\frac{n+1}{2}-i\right)} s_{\lambda}(\mu_{1}(\varpi_{v}),\ldots,\mu_{n}(\varpi_{v})).
\end{equation*}
\end{prop}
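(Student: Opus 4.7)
The plan is two-stage: first, derive vanishing outside the dominant cone from the right $U_{n}(\of_{v})$-invariance of $W^{\circ}_{v}$; second, establish the explicit formula for dominant $\lambda$ via the Casselman--Shalika procedure, identifying the answer with the Schur polynomial through the Weyl character formula.

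\textbf{Vanishing.} For any $u=(u_{ij})\in U_{n}(\of_{v})$, the right-$U_{n}(\of_{v})$-invariance of $W^{\circ}_{v}$ together with the Whittaker transformation law gives
\[
W^{\circ}_{v}(\varpi_{v}^{\lambda}) = W^{\circ}_{v}(\varpi_{v}^{\lambda}u) = \psi_{v}\!\left(\sum_{i=1}^{n-1}\varpi_{v}^{\lambda_{i}-\lambda_{i+1}}u_{i,i+1}\right)W^{\circ}_{v}(\varpi_{v}^{\lambda}),
\]
after conjugating $u$ past $\varpi_{v}^{\lambda}$ to land back in $U_{n}(F_{v})$. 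If $\lambda_{i}<\lambda_{i+1}$ for some $i$, then varying $u_{i,i+1}\in\of_{v}$ makes the character non-trivial (as $\psi_{v}$ has conductor $\of_{v}$), forcing $W^{\circ}_{v}(\varpi_{v}^{\lambda})=0$. This extends Lemma \ref{lem:support-whittaker} to the full torus.

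\textbf{Dominant case.} Realise the spherical vector $f^{\circ}$ in the induced model \eqref{eq:spherical-prince} with $f^{\circ}(1)=1$ and consider the Jacquet integral
\[
W_{v}(g)=\int_{U_{n}(F_{v})}f^{\circ}(wug)\overline{\psi_{v}(u)}\,du,
\]
which, after analytic continuation in the parameters $\mu_{i}$, yields a right-$\GL_{n}(\of_{v})$-invariant Whittaker function; dividing by $W_{v}(1)$ produces $W^{\circ}_{v}$. Set $t_{i}=\mu_{i}(\varpi_{v})$ and $\phi_{\lambda}(t):= q_{v}^{-\sum_{i}\lambda_{i}((n+1)/2-i)}W^{\circ}_{v}(\varpi_{v}^{\lambda})$; the goal becomes $\phi_{\lambda}(t)=s_{\lambda}(t_{1},\ldots,t_{n})$, the power of $q_{v}$ being dictated by the modular character of $B_{n}(F_{v})$. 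The Casselman--Shalika argument then proceeds in two steps. First, one compares the Jacquet integrals across the different induced realisations $\Ind(\mu_{\sigma(1)}\otimes\cdots\otimes\mu_{\sigma(n)})$ for $\sigma\in S_{n}$: the standard intertwining operators send spherical vectors to spherical vectors with a Gindikin--Karpelevich scalar, while each such model carries a Whittaker functional unique up to scalar; combining these yields a family of functional equations showing that $\phi_{\lambda}(t)\prod_{i<j}(1-q_{v}^{-1}t_{i}/t_{j})$ is Weyl-symmetric in $t$. Second, extracting the trivial $u$-contribution from the Jacquet integral at $g=\varpi_{v}^{\lambda}$ confirms that $\phi_{\lambda}(t)$ contains $t_{1}^{\lambda_{1}}\cdots t_{n}^{\lambda_{n}}$ as its leading monomial. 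The Weyl character formula -- which characterises $s_{\lambda}$ uniquely as the Weyl-symmetric Laurent polynomial whose product with the Vandermonde $\prod_{i<j}(t_{i}-t_{j})$ has the correct leading behaviour -- then forces $\phi_{\lambda}=s_{\lambda}$ as in \eqref{eq:schur-poly}.

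\textbf{Main obstacle.} The technical crux is the Gindikin--Karpelevich computation: one must evaluate, for each simple root $\alpha$, the rank-one integral $\int_{F_{v}}f^{\circ}(w_{\alpha}n_{\alpha}(x))\,dx$, and normalise Haar measures carefully so that the product over all positive roots assembles into precisely the Weyl denominator $\prod_{i<j}(1-q_{v}^{-1}t_{i}/t_{j})$. An alternative route, in the spirit of Shintani's original argument, uses the spherical Hecke algebra: its action on $W^{\circ}_{v}$ generates recurrences that uniquely determine the values $W^{\circ}_{v}(\varpi_{v}^{\lambda})$, and one verifies via the Pieri rule that Schur polynomials satisfy these same recurrences.
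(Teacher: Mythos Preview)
The paper does not prove this proposition at all: it is stated with attribution to Shintani \cite{shintani} and immediately followed by a remark pointing to \cite[Lecture 7]{cogdell-fields} for exposition. There is no ``paper's own proof'' to compare against.

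That said, your sketch is a recognisable outline of the Casselman--Shalika argument, and the vanishing step is correct and matches the spirit of Lemma~\ref{lem:support-whittaker}. Two comments. First, what you describe is a \emph{sketch}, not a proof: the functional equations coming from the intertwining operators, the precise Gindikin--Karpelevich constants, and the passage from ``Weyl-symmetric with the right leading monomial'' to ``equals $s_{\lambda}$'' all require careful bookkeeping that you have signposted but not executed. Second, you correctly note that Shintani's original route is different---he works through the spherical Hecke algebra and Satake isomorphism rather than intertwining operators---so if one wanted to match the cited source one would follow that line instead. Either approach is standard and yields the stated formula; the paper is content to invoke the result as background.
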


\begin{rem}
These values are equal to a certain Hecke eigenvalue of $W_{v}^{\circ}$. We refer to \cite[Lecture 7]{cogdell-fields} for wider exposition and context of Shintani's result. Note that the power of $q_{v}$ in Shintani's formula is precisely (the square-root of) the modular character of $B_{n}(F_{v})$. It comes directly from our \textit{unitary} normalisation of the unramified principal series $\pi_{v}$.

\end{rem}

\subsection{The contragredient representation}\label{sec:contragredient}

The involution $\iota$ on $\GL_{n}(\A_{F})$, given by $g^{\iota}=\tran{g}{-1}$, determines an injection of $\pi$ into its contragredient representation $\tilde{\pi}$. Explicitly by defining the functions $\tilde{\varphi}=\varphi\circ\iota$ for $\varphi\in\V_{\pi}$. By \eqref{eq:whittaker-isomorphism}, we obtain the $\bar{\psi}$-Whittaker function $W_{\tilde{\varphi}}$ which satisfies $W_{\tilde{\varphi}}(g)=W_{\varphi}(wg^{\iota})$ and
\begin{equation}\label{eq:contragredient-right-shift}
W_{(\rho(h)\varphi)}(g^\iota)=\rho(h^\iota)W_{\tilde{\varphi}}(g)
\end{equation}
for $g,h\in \GL_{n}(\A)$. The Whittaker model $\Wh(\tilde{\pi},\bar{\psi})$ (resp.\ $\Wh(\tilde{\pi}_{v},\bar{\psi}_{v})$) is then  given by the space of functions $\tilde{W}$, defined by $\tilde{W}(g)=W(wg^{\iota})$, for each $W\in\Wh(\pi,\psi)$ (resp.\ $\Wh(\tilde{\pi}_{v},\bar{\psi}_{v})$).

\subsection{Euler factors and epsilon constants}\label{sec:euler-factors}

Let $v$ be a place of $F$. For any character $\chi$ of $\Fx_{v}$ we can define the twist $\chi\pi_{v}=(\chi\circ\det)\otimes\pi_{v}$. We follow Godement--Jacquet \cite{godement-jacquet} in defining the local Euler factors $L(s,\chi\pi_{v})$, epsilon constants $\varepsilon(s,\chi\pi_{v},\psi_{v})$, and gamma factors
\begin{equation}\label{eq:def:gamma-factors}
\gamma(s,\chi\pi_{v},\psi_{v})=\varepsilon(s,\chi\pi_{v},\psi_{v})\dfrac{L(1-s,\chi^{-1}\tilde{\pi}_{v})}{L(s,\chi\pi_{v})}.
\end{equation}
For $v\not\in S_{\infty}$ we have the formula 
\begin{equation}\label{eq:epsilon-constant-conductor}
\varepsilon(s,\pi_{v},\psi_{v})=\varepsilon(1/2,\pi_{v},\psi_{v})\,q_{v}^{a(\pi_{v})(\frac{1}{2}-s)}
\end{equation}
(see \cite[Theorem 3.3 (4) \& (3.3.5)]{godement-jacquet}), in which the \textit{conductor}\footnote{By \cite[\S 5, Th{\'e}or{\`e}me]{jacquet-ps-shalika-conductor}, the conductor $a(\pi_{v})$ is realised as the smallest non-negative integer for which there is a non-zero vector in $\Wh(\pi_{v},\psi_{v})$ which is fixed by the corresponding maximally chosen compact open subgroup of $\GL_{n}(F_{v})$. Such a vector is unique up to scalar multiplication.} $a(\pi_{v})$ of $\pi_{v}$ is implicitly defined. The \textit{root number} $\varepsilon(1/2,\pi_{v},\psi_{v})$ takes its value on the unit circle.
Of course, these factors can also be reinterpreted via the local Langlands correspondence for $\GL_{n}$ (see \cite{wedhorn,rohrlich} for such a description); this viewpoint is useful should one want to compute them explicitly for a given $\pi_{v}$.
One has $a(\pi_{v})=0$ if and only if $\pi_{v}$ is unramified. Thus one makes sense of the following global definition.
\begin{defn}\label{def:level}
We call a positive integer $N(\pi)$ the \textit{level} (or \textit{analytic conductor}) of an irreducible automorphic representation $\pi=\otimes_{v}\pi_{v}$ if $N(\pi)=\prod_{v\not\in S_{\infty}} q_{v}^{a(\pi_{v})}$.
\end{defn}
Then the global $L$-function $L(s,\pi)=\prod_{v}L(s,\pi_{v})$, initially defined for $\Re(s)$ sufficiently large, has analytic continuation to all $s\in\C$, is bounded in vertical strips, and satisfies the functional equation $$L(s,\pi)=\varepsilon(1/2,\pi)N(\pi)^{(1/2-s)}L(1-s,\tilde{\pi})$$ where $\varepsilon(1/2,\pi)=\prod_{v} \varepsilon(1/2,\pi_{v},\psi_{v})$ is independent of $\psi$ (see \cite[Theorem 13.8]{godement-jacquet}).

\section{\texorpdfstring{Vorono\u{\i}}{Voronoi} summation via the Whittaker model}\label{sec:general}

Ichino--Templier's recasting of Vorono\u{\i} summation formulae in an adelic framework is a tremendously important step in understanding the mechanisms governing such identities. Here we prove a strengthening of their results as given in \cite{ichino-templier}.


Throughout this section, fix a number field $F$ and an unramified character $\psi=\otimes_{v}\psi_{v}$ of $\A_{F}/F$ (see \S \ref{sec:additive-characters}). Also fix $n\geq 2$ and let $\pi=\otimes_{v}\pi_{v}$ denote an irreducible, cuspidal automorphic representation of $\GL_{n}(\A_{F})$. We now pose a Vorono\u{\i} summation formula for the Fourier coefficients in the Whittaker model $\Wh(\pi,\psi)$.

\subsection{The generalised Bessel tranformation}\label{sec:bessel-existance}

We now describe the key tool in handling local ramification in the Vorono\u{\i} summation formula: the generalised Bessel tranformation. Presently we give a criterion for the existence of such a transform. But a key feature of our work is the explication of these transforms, for which we give full details in \S \ref{sec:bessel-explicit} in the general case.

Let $v$ be a place of $F$. If $v\in S_{\infty}$ (resp.\ $v\not\in S_{\infty}$) let $\Cc^{\infty}(\Fx_{v})$ denote the space of smooth (resp.\ locally constant) functions on $\Fx_{v}$. In either case, let $\Ccinf(\Fx_{v})\subset \Cc^{\infty}(\Fx_{v})$ denote the subspace of functions whose support is compact.

\begin{prop}\label{prop:bessel-existence}

Let $\Phi\colon\Fx_{v}\rightarrow\C$ be a function defined by $\Phi(y)=W(\smat{y}{}{}{1_{n}})$ for some $W\in\Wh(\pi_{v},\psi_{v})$; by Proposition \ref{prop:kirilov-schwartz}, this includes the set $\Ccinf(\Fx_{v})$. Then there exists a unique function $\Bc_{\pi_{v},\Phi}\colon \Fx_{v}\rightarrow\C$, the Bessel transform of $\Phi$, such that for all $s\in\C$ with $\Re(s)$ sufficiently large and for all characters $\chi$ of $\Fx_{v}$ we have
\begin{equation}\label{eq:duality-equation}
\begin{array}{l}\vspace{0.1in}
\displaystyle\int_{\Fx_{v}}\Bc_{\pi_{v},\Phi}(y)\chi(y)^{-1}\abs{y}_{v}^{s-\frac{n-1}{2}}\dxy\,=\hfill\\
\hspace{1in}\displaystyle
\chi(-1)^{n-1}\gamma(1-s,\chi\pi_{v},\psi_{v})\int_{\Fx_{v}}\Phi(y)\chi(y)\abs{y}_{v}^{1-s-\frac{n-1}{2}}\dxy
\end{array}
\end{equation}
where $\dxy$ denotes a Haar measure on $\Fx_{v}$.
\end{prop}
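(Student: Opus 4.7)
The plan is to establish uniqueness by Mellin inversion on $\Fx_{v}$ and existence via the local functional equation for the Rankin--Selberg $L$-function $L(s,\chi\pi_{v})$.

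\textbf{Uniqueness.} For $\Re(s)$ in the stated regime, the left-hand side of \eqref{eq:duality-equation}, regarded as a function of the unitary character $\chi$, is the Mellin--Fourier transform of $\Bc_{\pi_{v},\Phi}(y)\abs{y}_{v}^{s-(n-1)/2}$ on the Pontryagin dual of $\Fx_{v}$. By Fourier inversion on the locally compact abelian group $\Fx_{v}$, any two locally integrable functions that agree against all such tests must coincide almost everywhere. Hence \eqref{eq:duality-equation} determines $\Bc_{\pi_{v},\Phi}$ uniquely wherever it exists.

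\textbf{Existence.} By Proposition \ref{prop:kirilov-schwartz}, select $W\in\Wh(\pi_{v},\psi_{v})$ with $W(a(y))=\Phi(y)$. Let $\tilde{W}(g):=W(w_{n}g^{\iota})\in\Wh(\tilde{\pi}_{v},\bar{\psi}_{v})$ be the associated contragredient Whittaker function, as in \S\ref{sec:contragredient}, and set
$$
\Bc_{\pi_{v},\Phi}(y)\;:=\;\tilde{W}(a(y)),\qquad y\in\Fx_{v}.
$$
The left-hand side of \eqref{eq:duality-equation} is then precisely the Mellin transform of $\tilde{W}$ along the $a$-torus against $\chi^{-1}$, while the right-hand side is $\chi(-1)^{n-1}\gamma(1-s,\chi\pi_{v},\psi_{v})$ times the corresponding Mellin transform of $W$ against $\chi$ at $1-s$. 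The local functional equation of Godement--Jacquet \cite{godement-jacquet} for $L(s,\chi\pi_{v})$ (equivalently the Jacquet--Piatetski-Shapiro--Shalika Rankin--Selberg functional equation for $\GL_{n}\times\GL_{1}$) supplies exactly this identity between the two Mellin integrals, giving \eqref{eq:duality-equation}.

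\textbf{Main obstacle.} The principal technical concerns are twofold. First, convergence: the two Mellin integrals in \eqref{eq:duality-equation} are absolutely convergent only in complementary half-planes of $s$, so the identity is to be interpreted as equality of meromorphic continuations along an overlapping strip. Bounds for the restrictions of Whittaker functions to the $a$-torus are required here --- non-archimedeanly via Lemma \ref{lem:support-whittaker} together with standard asymptotic analysis near $y=0$, archimedeanly via Casselman--Wallach moderate-growth theory. Second, one must track the sign $\chi(-1)^{n-1}$ through the actions of the long Weyl element $w_{n}$ and the involution $\iota$ on the Whittaker model in order to match the normalisation of $\gamma$ adopted in \eqref{eq:def:gamma-factors}. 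Both points are bookkeeping matters given the established local theory, but require careful attention to reproduce the stated duality verbatim.
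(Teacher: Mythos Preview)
Your uniqueness argument is fine and matches the spirit of the paper. The existence argument, however, contains a genuine error: the candidate $\Bc_{\pi_{v},\Phi}(y)=\tilde{W}(a(y))$ does \emph{not} satisfy \eqref{eq:duality-equation} when $n\geq 3$.

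The local functional equation for $\GL_{n}\times\GL_{1}$ (Jacquet--Piatetski-Shapiro--Shalika) does not relate the two torus integrals
\[
\int_{\Fx_{v}}W(a(y))\chi(y)\abs{y}_{v}^{1-s-\frac{n-1}{2}}\dxy
\quad\text{and}\quad
\int_{\Fx_{v}}\tilde{W}(a(y))\chi(y)^{-1}\abs{y}_{v}^{s-\frac{n-1}{2}}\dxy
\]
directly. Rather, it relates the first integral to a \emph{different} integral on the dual side, one that carries an additional unipotent integration over $F_{v}^{n-2}$. Concretely, the correct candidate is
\[
\Bc_{\pi_{v},\Phi}(y)=\int_{F_{v}^{n-2}}\tilde{W}\left(
\begin{pmatrix}
y&&\\
x&1_{n-2}&\\
&&1
\end{pmatrix}
\begin{pmatrix}
1&\\&w_{n-1}
\end{pmatrix}
\right)dx,
\]
which is precisely the function $\Hf_{v}(y;0,1)$ appearing later in the paper; this is what \cite[Lemma 2.3]{ichino-templier} constructs and what the paper's one-line proof is invoking. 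Only for $n=2$ (where the $x$-integral is vacuous) does your formula $\tilde{W}(a(y))$ reproduce the Bessel transform. For $n\geq 3$, your proposed $\Bc_{\pi_{v},\Phi}$ will fail \eqref{eq:duality-equation}: the gamma factor produced by the functional equation attaches to the unipotently-averaged integral, not to the naked torus integral of $\tilde{W}$.

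So the fix is straightforward but essential: replace your definition of $\Bc_{\pi_{v},\Phi}$ by the unipotent integral above, and then the Jacquet--Shalika functional equation gives \eqref{eq:duality-equation} on the nose. The ``main obstacle'' paragraph about convergence and the sign $\chi(-1)^{n-1}$ is well-placed, but it presupposes the correct dual integral to begin with.
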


\begin{proof}
This result follows directly from the local functional for $\GL_{n}\times\GL_{1}$ as proved by Jacquet--Shalika \cite{jacquet-ps-shalika-rs-1,jacquet-shalika-rs-2}; cf.\ \cite[Lemma 5.2]{ichino-templier}.
\end{proof}

\begin{rem}
For $\Phi\in \Ccinf(\Fx_{v})$, the transform $\Bc_{\pi_{v},\Phi}$ satisfies the following properties: sub-polynomial decay at infinity (for each $\delta> 0$ we have $\tilde{\Phi}(y)\ll \abs{y}_{v}^{-\delta}$ as $\abs{y}_{v}\rightarrow\infty$) and polynomial growth at zero (there exists a $\delta> 0$ such that $\tilde{\Phi}(y)\ll \abs{y}_{v}^{\delta}$ as $y\rightarrow 0$).
\end{rem}

 

\subsection{The hyper Kloosterman sum}\label{sec:hyper-kloosterman}


At a place $v\not\in S_{\infty}$, denote by $T_{v}^{1}\subset T_{n-2}(F_{v})$ the set of diagonal matrices of the form $t=\diag(t_{2},\ldots, t_{n-1})$ with $\abs{t_{i}}_{v}\geq 1$ for each $2\leq i \leq n-1.$ Then $\of_{v}$ acts (additively) on $t_{i}$ if $\abs{t_{i}}_{v}>1$, whence we define the quotients 
\begin{equation}\label{eq:lambda-xi}
\Lambda_{t_{i}}=\begin{cases}
t_{i}\ofx_{v}/\of_{v}&\ifs \abs{t_{i}}_{v}>1\\
\{1\}&\ifs \abs{t_{i}}_{v}=1.
\end{cases}
\end{equation}
Now fix a `modulus' $\zeta=(\zeta_{v})\in \A_{F}$ and a `shift' $\xi=(\xi_{v})\in T_{n}(\A_{F})$. We write $\xi_{v}=\diag(\xi_{1},\cdots,\xi_{n})$. Let $R$ be a set of places $v\not\in S_{\infty}$ such that $\pi_{v}$ is unramified and either $\abs{\zeta_{v}}_{v}>1$ or $\xi_{v}\not\in T_{n}(\of_{v})$. For $v\in R$ let $t\in T_{v}^{1}$. If $\abs{\zeta_{v}\xi_{1}^{-1}\xi_{2}}_{v}\geq 1$ then for $y\in \Fx_{v}$ define the $(n-1)$-dimensional hyper-Kloosterman sum by
\begin{equation}\label{eq:kloosterman-sum-v1}
\begin{array}{l}\vspace{0.1in}
\displaystyle\Kl_{v}(y,t;\zeta,\xi)=\abs{\xi_{2}\zeta_{v}}^{n-2}\abs{\xi_{3}\cdots\xi_{n}}^{-1}_{v}\psi_{v}(-\xi_{2}\xi_{3}^{-1})\\
\hspace{0.4in}\displaystyle\times\,\sum_{i=2}^{n-1} \sum_{x_{i}\in\Lambda_{t_{i}}}\psi_{v}( (-1)^{n}y\zeta_{v}^{-1}\xi_{2}^{-1}\xi_{n} x_{n-1}^{-1}\cdots x_{2}^{-1})\prod_{j=2}^{n-1}\psi_{v}(\xi_{n-j+1}\xi_{n-j+2}^{-1}x_{j}).
\end{array}
\end{equation}
If $\abs{\zeta_{v}\xi_{1}^{-1}\xi_{2}}_{v}\leq 1$ then define $\Kl_{v}(y,t;\zeta,\xi)\,=\,\Kl_{v}(y,t;\xi_{1}\xi_{2}^{-1},\xi)$ so that
\begin{equation}\label{eq:kloosterman-sum-v2}
\begin{array}{l}\vspace{0.1in}
\displaystyle\Kl_{v}(y,t;\zeta,\xi)=\abs{\xi_{1}}^{n-2}\abs{\xi_{3}\cdots\xi_{n}}^{-1}_{v}\psi_{v}(-\xi_{2}\xi_{3}^{-1})\\
\hspace{0.4in}\displaystyle\times\,\sum_{i=2}^{n-1} \sum_{x_{i}\in\Lambda_{t_{i}}}\psi_{v}( (-1)^{n}y\xi_{1}^{-1}\xi_{n} x_{n-1}^{-1}\cdots x_{2}^{-1})\prod_{j=2}^{n-1}\psi_{v}(\xi_{n-j+1}\xi_{n-j+2}^{-1}x_{j}).
\end{array}
\end{equation}
More generally, define the product $T_{R}^{1}=\prod_{v\in R}T_{v}^{1}$, which we view embedded in $T_{n-2}(\A_{F})$ by extending trivially at places $v\not\in R$. Moreover, for $y\in\bigcap_{v\in R} F_{v}$ and $t=(t_{v})\in T_{R}^{1}$ define
\begin{equation}\label{eq:kloosterman-sum-R}
\Kl_{R}(y,t;\zeta,\xi)=\begin{cases}
\prod_{v\in R}\,\Kl_{v}(y,t_{v};\zeta,\xi)&\ifs R\neq \emptyset\\
1 & \ifs R=\emptyset.
\end{cases}
\end{equation}
Finally, for $t=(t_{v})\in T_{R}^{1}$ with $t_{v}=\diag(t_{2},\ldots, t_{n-1})$, define the matrix
\begin{equation}\label{eq:delta-def}
\delta_{R}(t;\zeta,\xi)=(\delta_{v})\in T_{n}(\A_{F})
\end{equation}
whose local factors are $\delta_{v}=1$ for $v\not\in R$;
\begin{equation*}
\delta_{v}=\begin{pmatrix}
\zeta_{v}^{-1}(\det t)^{-1}\xi_{2}^{-1}&&&&\\
&t_{2}\xi_{n}^{-1}&&&\\
&&\ddots&&\\
&&&t_{n-1}\xi_{3}^{-1}&\\
&&&&\zeta_{v}\xi_{1}^{-1}
\end{pmatrix}
\end{equation*}
for $v\in R$ such that $\abs{\zeta_{v}\xi_{1}^{-1}\xi_{2}}>1$; and
\begin{equation*}
\delta_{v}=\begin{pmatrix}
\xi_{1}^{-1}(\det t)^{-1}&&&&\\
&t_{2}\xi_{n}^{-1}&&&\\
&&\ddots&&\\
&&&t_{n-1}\xi_{3}^{-1}&\\
&&&&\xi_{2}^{-1}
\end{pmatrix}
\end{equation*}
for $v\in R$ such that $\abs{\zeta_{v}\xi_{1}^{-1}\xi_{2}}\leq 1$.

\begin{rem}
In practice, we only consider sums of finitely many terms in $T^{1}_{R}$, as determined by the support of unramified Whittaker new-vectors on $W_{v}(a(y)\delta_{v})$ for $v\in R$ (See Proposition \ref{prop:shintani-formula} and Theorem \ref{thm:voronoi-general}). In particular we consider
\begin{equation}\label{eq:whittaker-supp-xi}
\abs{y \zeta^{-1} (\det t_{v})^{-1}\xi_{2}^{-1}}_{v}\leq \abs{t_{2}\xi_{n}^{-1}}_{v}\leq\cdots\leq \abs{t_{n-1}\xi_{3}^{-1}}_{v}\leq \abs{\zeta_{v}\xi_{1}^{-1}}_{v}
\end{equation}
for $\xi_{v}=\diag(\xi_{1},\ldots,\xi_{n})$ and $\abs{\zeta_{v}\xi_{1}^{-1}\xi_{2}}_{v}\geq 1$. This is an artefact of the more general notion of a \textit{Kloosterman integral} as studied by Stevens \cite[Def.\ 2.6]{stevens-kloostermann}, wherefrom \eqref{eq:kloosterman-sum-v1} and \eqref{eq:kloosterman-sum-v2} are derived in the proof of Theorem \ref{thm:voronoi-general}.
\end{rem}


\subsection{The general \texorpdfstring{Vorono\u{\i}}{Voronoi} summation formula}\label{sec:general-formula}

We now state the most general Vorono\u{\i} formula for $\GL_{n}(\A_{F})$, extending the results of Ichino--Templier. In particular, what follows in Theorem \ref{thm:voronoi-general} subsumes their main results, \cite[Theorems 1, 3, \& 4]{ichino-templier}, as well as generalising them to the case of joint level--modulus ramification. We further refine this formula by later explicating the generalised Bessel transforms; see \S \ref{sec:bessel-explicit}.

\begin{theorem}\label{thm:voronoi-general}

Let $n\geq 2$ and let $\pi=\otimes_{v}\pi_{v}$ be an irreducible cuspidal automorphic representation of $\GL_{n}(\A_{F})$. Let $Q$ denote the set of places $v\not\in S_{\infty}$ at which $\pi_{v}$ ramifies, $a(\pi_{v})> 0$. Let $\psi=\otimes_{v}\psi_{v}$ be a non-trivial, unramified\footnote{Recall that $\psi$ is unramified if and only if $\of_{v}\subset \ker_{F_{v}} (\psi_{v})$ for each $v\not\in S_{\infty}$. No generality is lost by making this assumption due to the choices of $\zeta\in\A_{F}$, $\xi\in T_{n}(\A_{F})$ and $W\in\Wh(\pi,\psi)$.} additive character of $\A_{F}/F$. Choose two finite sets $Q,S$ of places of $F$ such that $S_{\infty}\subset S$; $Q\cap S=\emptyset$; and $Q\cup S$ contains each place $v\not\in S_{\infty}$ at which $\pi_{v}$ ramifies, that is $a(\pi_{v})> 0$. Moreover, for each $v\in S$ pick some $\phi_{v}\in \Ccinf(\Fx_{v})$. Fix a `modulus' $\zeta=(\zeta_{v})\in \A_{F}$ and a `shift' $\xi=(\xi_{v})\in T_{n}(\A_{F})$ such that $\xi_{v}=1$ for $v\in Q\cup S$. 
Denote by $R$ the set of places $v\not\in Q \cup S$ such that either $\abs{\zeta_{v}}_{v}>1$ or $\xi_{v}\not\in T_{n}(\of_{v})$. Finally, let $W=\otimes_{v}W_{v}\in\Wh(\pi,\psi)$ such that $W_{v}$ is right-$\GL_{n}(\of_{v})$-invariant for almost all $v\not\in S_{\infty}$. Then
\begin{equation}\label{eq:formula-general-theorem}
\begin{array}{l}\vspace{0.15in}
\displaystyle
\sum_{\gamma\in\Fx}
\psi(\gamma\zeta)\,
W\left(\begin{pmatrix}
\gamma&\\&1_{n-1}
\end{pmatrix}
\xi\right)
\prod_{v\in S} \phi_{v}(\gamma)\,=\\
\hspace{0.1in}\displaystyle
\sum_{\gamma\in\Fx}
\sum_{t\in T_{R}^{1}}
\Kl_{R}(\gamma,t;\zeta,\xi)
\,\tilde{W}_{Q\cup S}\left(
\mat{\gamma}{}{}{1_{n-1}}
\delta_{R}(t;\zeta,\xi)
\right)
\prod_{v\in Q\cup S}\Bc_{\pi_{v},\Phi_{v}^{\zeta_{v}}}(\gamma)
\end{array}
\end{equation}
where the hyper-Kloosterman sum $\Kl_{R}(\gamma,t;\zeta,\xi)$ is defined in \eqref{eq:kloosterman-sum-R}; $\delta_{R}(t,\zeta,\xi)$ in \eqref{eq:delta-def}; we define $\tilde{W}_{Q\cup S}(g):=\prod_{v\not\in Q\cup S}\tilde{W}_{v}(g_{v})$ for $g=(g_{v})\in \GL_{n}(\A_{F})$ where the dual vector is given by $\tilde{W}_{v}(g)=W_{v}(w_{n} \tran{g}{-1})$, as in \S \ref{sec:contragredient}; and lastly, for $v\in Q\cup S$, one defines the function $\Phi_{v}$ on $y\in\Fx_{v}$ by
\begin{equation}\label{eq:big-phi}
\Phi_{v}(y):=\begin{cases}\vspace{0.1in}
\phi_{v}(y)W_{v}\left(\smat{y}{}{}{1_{n-1}}\right)& \text{if } v\in S\\
W_{v}\left(\smat{y}{}{}{1_{n-1}}\right)& \text{if } v\in Q,
\end{cases}
\end{equation}
and its twists by $\zeta_{v}$ via $\Phi_{v}^{\zeta_{v}}(y):=\psi_{v}(y\zeta_{v})\Phi_{v}(y)$.
\end{theorem}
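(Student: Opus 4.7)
The proof refines the strategy of Ichino--Templier to admit joint ramification of level and modulus. The key structural move is to absorb the additive twist $\psi(\gamma\zeta)$ into the Whittaker function via the identity $a(\gamma)n(\zeta) = n(\gamma\zeta)a(\gamma)$ together with the left-$\psi$-equivariance of $W$:
\begin{equation*}
\psi(\gamma\zeta)\,W(a(\gamma)\xi)\,=\, W\bigl(a(\gamma)\,n(\zeta)\,\xi\bigr).
\end{equation*}
Combined with the factorisation $W = \otimes_v W_v$, this reduces matters to a place-by-place analysis. The unramified places $v\notin R\cup Q\cup S$ contribute transparently because $n(\zeta_v)\xi_v\in \GL_n(\of_v)$ there, so right-$K_v$-invariance of $W_v$ leaves no residue; these places are then packaged into $\tilde W_{Q\cup S}$ on the RHS via the intertwining relation $\tilde W_v(g) = W_v(w_n g^{\iota})$ of \S \ref{sec:contragredient}.

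At the unramified places $v\in R$ I would perform an Iwasawa decomposition of $a(\gamma)n(\zeta_v)\xi_v$, using Lemma \ref{lem:support-whittaker} and Shintani's formula (Proposition \ref{prop:shintani-formula}) to pin down the support of $W_v$ on the resulting diagonal torus. After a change of variables, summation over the $\of_v$-orbits of each eigenvalue, as encoded by the quotients $\Lambda_{t_i}$ of \eqref{eq:lambda-xi}, collapses the accumulated unipotent phases into precisely the hyper-Kloosterman sum $\Kl_v(\gamma,t;\zeta,\xi)$, while the residual toral data becomes the shift matrix $\delta_v$ acting on $\tilde W_v$. The dichotomy between \eqref{eq:kloosterman-sum-v1} and \eqref{eq:kloosterman-sum-v2} stems from whether $\abs{\zeta_v\xi_1^{-1}\xi_2}_v$ exceeds or does not exceed $1$, corresponding to two distinct Bruhat reductions that must be treated separately and whose outputs are recorded by the two-case definition of $\delta_v$ in \eqref{eq:delta-def}.

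At the ramified places $v\in Q\cup S$ the Iwasawa approach is unavailable because $W_v$ lacks a useful right-invariance. Instead I would invoke Proposition \ref{prop:bessel-existence}: Mellin-inverting against characters $\chi$ of $\Fx_v$, the Mellin transform of $\Phi_v^{\zeta_v}$ equals, up to the factor $\chi(-1)^{n-1}\gamma(1-s,\chi\pi_v,\psi_v)$, that of $\Bc_{\pi_v,\Phi_v^{\zeta_v}}$; the $\gamma$-factor is precisely what converts a local zeta integral for $\pi_v$ into one for $\tilde\pi_v$, so Mellin inversion recovers $\tilde W_v$ paired with $\Bc_{\pi_v,\Phi_v^{\zeta_v}}$ as appears in \eqref{eq:formula-general-theorem}. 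The main obstacle is then the combinatorial matching at places $v\in R$ where $\zeta_v$ and $\xi_v$ are jointly nontrivial: the two Bruhat regimes produce distinct $\delta_v$ and Kloosterman data that must patch consistently with the contributions from $Q\cup S$ without overcounting, guided by the support constraint \eqref{eq:whittaker-supp-xi}. Casting the computation in the Kloosterman-integral formalism of Stevens is what untangles the ``collusion between level and modulus'' and delivers \eqref{eq:formula-general-theorem}.
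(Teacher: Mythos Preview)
Your proposal has a genuine structural gap: it is missing the global input. The Vorono\u{\i} identity is not a product of local identities; the passage from $\sum_{\gamma}W(a(\gamma)\cdots)$ to $\sum_{\gamma}\tilde{W}(\cdots)$ requires the \emph{fundamental identity} \eqref{eq:basic-id},
\[
\sum_{\gamma\in\Fx} W_{\varphi}(a(\gamma)) \,=\,
\sum_{\gamma\in\Fx} \int_{\A^{n-2}}\widetilde{W}_{\varphi}\!\left(
\begin{pmatrix}\gamma&&\\x&1_{n-2}&\\&&1\end{pmatrix}
\begin{pmatrix}1&\\&w_{n-1}\end{pmatrix}\right)dx,
\]
which encodes the global functional equation for $\GL_{n}\times\GL_{1}$ and is where automorphy is actually used. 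In the paper one applies this to $\varphi=\rho(n(\zeta)\xi)\varphi'$ (your absorption step is exactly this choice of test vector), and only then does the right-hand side factor into local integrals $\Hf_{v}(y;\zeta_{v},\xi_{v})$ that one computes place by place. Without \eqref{eq:basic-id} there is no mechanism producing $\tilde{W}$ from $W$, and no source for the $\A^{n-2}$-integral that ultimately generates the $t\in T^{1}_{R}$-sum and the Kloosterman sums.

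Two concrete symptoms of this gap in your write-up. First, at $v\in R$ you propose an ``Iwasawa decomposition of $a(\gamma)n(\zeta_{v})\xi_{v}$'', but this matrix is already upper triangular, so Iwasawa is vacuous; the Kloosterman data does not come from decomposing this element at all, but from evaluating the integral $\Hf_{v}$ over $F_{v}^{n-2}$ via Stevens' Kloosterman-integral machinery applied to the matrix $\tau$ in \eqref{eq:tau-def} (after a Bruhat decomposition of $\tran{n(-\zeta_{v}\xi_{1}^{-1}\xi_{2})}{}$). Second, at $v\in Q\cup S$ you say Mellin inversion ``recovers $\tilde{W}_{v}$ paired with $\Bc_{\pi_{v},\Phi_{v}^{\zeta_{v}}}$'', but on the RHS of \eqref{eq:formula-general-theorem} there is \emph{no} $\tilde{W}_{v}$ at these places --- $\tilde{W}_{Q\cup S}$ is by definition the product over $v\notin Q\cup S$. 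What actually happens is that the local integral $\Hf_{v}$ at $v\in Q\cup S$ is \emph{identified} with $\Bc_{\pi_{v},\Phi_{v}^{\zeta_{v}}}$ via the local functional equation (cf.\ \cite[Lemma 2.3]{ichino-templier}); the Bessel transform is the whole local contribution, not a factor accompanying $\tilde{W}_{v}$.
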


\begin{rem}[On the hypotheses]
The details with which we formulate Theorem \ref{thm:voronoi-general} impose no real restrictions on the generality of the result. The additional generality in comparisson to \cite{ichino-templier} may be acutely summarised as follows:
\begin{itemize}
\item The generality in our choice of $W\in\Wh(\pi,\psi)$ (cf.\ \cite[Theorem 4]{ichino-templier}).
\item The assumption that if $v\not\in S_{\infty}$ and $\abs{\zeta_{v}}_{v}>1$ then $v\not\in Q\cup S$.
\end{itemize}
\end{rem}



\begin{proof}
Our proof of Theorem \ref{thm:voronoi-general} closely follows \cite[\S 2]{ichino-templier}, to which we shall refer for the sake of concision. Here we outline the core of this argument and describe in detail the modifications we make; in particular to \cite[\S 2.6 \& \S 2.7]{ichino-templier}.

One starts with the following fundamental identity:
\begin{equation}\label{eq:basic-id}
\sum_{\gamma\in\Fx} W_{\varphi}\left(\begin{pmatrix}
\gamma&\\
&1_{n-1}
\end{pmatrix}\right) =
\sum_{\gamma\in\Fx} \int_{\A^{n-2}}\widetilde{W}_{\varphi}\left(
\begin{pmatrix}
\gamma&&\\
x&1_{n-2}&\\
&&1
\end{pmatrix}
\begin{pmatrix}
1&\\
&w_{n-1}
\end{pmatrix}
\right)dx
\end{equation}
for any $\varphi\in\V_{\pi}$, letting $\V_{\pi}$ denote the space of automorphic forms carrying the representation $\pi$. See \S \ref{sec:whittaker-model} for discussion on Whittaker functions. The identity \eqref{eq:basic-id} follows from \cite[Prop.\ 1.1 \& Lem.\ 2.1]{ichino-templier}. It also features crucially in the construction of the global functional equation for $\GL_{n}\times\GL_{1}$. (Although, as noted by the authors, a proof does not appear in the literature until that in \cite[\S 4]{ichino-templier}.)

The subsequent goal is to evaluate \eqref{eq:basic-id} for an appropriate choice of $\varphi\in\V_{\pi}$. Typically, one synthesises the left-hand side as desired and picks up the pieces on the right.

To reconstruct the left-hand side of \eqref{eq:formula-general-theorem}, we first pick a vector $\varphi'\in\V_{\pi}$ and, without loss of generality, suppose $W_{\varphi'}=\otimes_{v}W_{v}'$. Firstly, note that the right translate $\rho(n(\zeta)\xi)\varphi'$ satisfies 
\begin{equation}
W_{\rho(n(\zeta)\xi)\varphi'}(a(\gamma))=W_{\varphi'}(a(\gamma)n(\zeta)\xi)=\psi(\gamma\zeta)W_{\varphi'}(a(\gamma)\xi).
\end{equation}
(See \eqref{eq:matrices-def} for definitions of the matrices $a(y)$ and $n(x)$.) Next, recall that the element $W=\otimes_{v}W_{v}\in\Wh(\pi,\psi)$ has been selected in the hypotheses of Theorem \ref{thm:voronoi-general}. We impose our choice upon each $W_{v}'$ as follows:
\begin{itemize}
\item If $v\in S$, consider the function $\Phi_{v}(y)=\phi_{v}(y)W_{v}(a(y))$. By Proposition \ref{prop:kirilov-schwartz}, there exists $W_{v}'\in \Wh(\pi_{v},\psi_{v})$ such that $W_{v}'(a(y))=\Phi_{v}(y)$ for all $y\in \Fx_{v}$.
\item If $v\not\in S$ then directly choose $W_{v}'=W_{v}$. In addition, for $v\in Q$ define the function $\Phi_{v}(y)=W_{v}(a(y))$ to homogenise notation.
\end{itemize}
We thus choose the test vector $\varphi:=\rho(n(\zeta)\xi)\varphi'$. By construction, the left-hand side of \eqref{eq:basic-id} is equal to the left-hand side of \eqref{eq:formula-general-theorem}.

It remains to compute the right-hand side of \eqref{eq:basic-id} after applying $\varphi=\rho(n(\zeta)\xi)\varphi'$. As is common practice in any trace formula, one observes that the geometric integrals factorise into local components: for $y\in\Fx_{v}$ define
\begin{equation*}
\Hf_{v}(y;\zeta_{v},\xi_{v})=\int_{F_{v}^{n-2}}\tilde{W}_{v}\left(
\begin{pmatrix}
y&&\\
x&1_{n-2}&\\
&&1
\end{pmatrix}
\begin{pmatrix}
1&\\
&w_{n-1}
\end{pmatrix}
\tran{n(-\zeta_{v})}{}
\xi_{v}^{-1}
\right)dx.
\end{equation*}
Then the right-hand side of \eqref{eq:basic-id} is equal to $\sum_{\gamma\in\Fx} \prod_{v} \Hf_{v}(\gamma;\zeta_{v},\xi_{v})$. We divide the argument into two genres, depending on whether places are in or out of $Q\cup S$.

Suppose $v\not\in Q\cup S$. We first show that $\Hf_{v}(y;\zeta_{v},\xi_{v})$ is equal to a certain hyper-Kloosterman integral, and then refine this integral in terms of Kloosterman sums.\footnote{This case is included in \cite[Theorem 3]{ichino-templier}. However, their result is stated in terms of hyper-Kloosterman integrals: the details of an explicit formula, such as ours, are left to the reader.} Let us always denote the factors of $\xi=(\xi_{v})$ by $$\xi_{v}=\diag(\xi_{1},\xi_{2},\ldots,\xi_{n}).$$
Note that if $\abs{\zeta_{v}\xi_{1}^{-1}\xi_{2}}_{v}\leq 1$ then $\Hf_{v}(y;\zeta_{v},\xi_{v})=\Hf_{v}(y;\xi_{1}\xi_{2}^{-1},\xi_{v})$, since $$\xi_{v}\tran{n(-\zeta_{v})}{}\xi_{v}^{-1}=\tran{n(-\zeta_{v}\xi_{1}^{-1}\xi_{2})}{}.$$ In such (unramified) cases, the integral may be computed directly; for comparison see \cite[\S 2.5]{ichino-templier}. However, in the spirit of austerity, we proceed by executing our computations with the assumption $\abs{\zeta_{v}\xi_{1}^{-1}\xi_{2}}_{v}\geq 1$. For example, for almost all places $v$ we have $\xi_{v}\in T(\of_{v})$ and $\zeta_{v}\in\of_{v}$, in which case $\Hf_{v}(y;\zeta_{v},\xi_{v})=\Hf_{v}(y;1,1)$.

For $x\in F_{v}^{n-2}$, rewrite
\begin{equation*}
\begin{pmatrix}
1&&\\
x&1_{n-2}&\\
&&1
\end{pmatrix}=\sigma\begin{pmatrix}
1_{n-2}&x&\\
&1&\\
&&1
\end{pmatrix}\sigma^{-1}\quad\text{where}\quad\sigma:= \begin{pmatrix}
&1&\\
1_{n-2}&&\\
&&1
\end{pmatrix}.
\end{equation*}
Changing variables from $x$ to $y^{-1}x$ we obtain
\begin{equation*}
\begin{array}{l}\vspace{0.15in}
\Hf_{v}(y;\zeta_{v},\xi_{v})\,=\\
\displaystyle\hspace{0.2in}\abs{y}_{v}^{n-2}\int_{F_{v}^{n-2}}\tilde{W}_{v}\left(\sigma
\begin{pmatrix}
1_{n-2}&x&\\
&1&\\
&&1
\end{pmatrix}
\sigma^{-1}
a(y)
\begin{pmatrix}
1&\\
&w_{n-1}
\end{pmatrix}
\xi_{v}^{-1}
\tran{n(-\zeta_{v}\xi_{1}^{-1}\xi_{2})}{}
\right)dx.
\end{array}
\end{equation*}
Consider the commutation relations
\begin{equation*}
\sigma^{-1}a(y)\sigma\sigma^{-1}\mat{1}{}{}{w_{n-1}}\xi^{-1}\mat{1}{}{}{w_{n-1}}\sigma = \begin{pmatrix}
\xi^{-1}_{n}&&&&&\\
&\ddots &&&&\\
&&\xi^{-1}_{3}&&&\\
&&&&y\xi^{-1}_{1}&\\
&&&&&\xi^{-1}_{2}
\end{pmatrix}
\end{equation*}
and, after applying the Bruhat decomposition to $\tran{n(-\zeta_{v}\xi_{1}^{-1}\xi_{2})}{}$,
\begin{equation*}
\begin{array}{r}\vspace{0.1in}
\sigma^{-1}\mat{1}{}{}{w_{n-1}}\tran{n(-\zeta_{v}\xi_{1}^{-1}\xi_{2})}{}\mat{1}{}{}{w_{n-1}}\sigma =\begin{pmatrix}
1_{n-2}&&\\
&1&-\zeta_{v}^{-1}\xi_{1}\xi_{2}^{-1}\\
&&1
\end{pmatrix}\hspace{0.4in}\\
\times\,
\begin{pmatrix}
1_{n-2}&&\\&-\zeta_{v}^{-1}\xi_{1}\xi_{2}^{-1}&\\
&&-\zeta_{v}\xi_{1}^{-1}\xi_{2}
\end{pmatrix}
\begin{pmatrix}
1_{n-2}&&\\
&&-1\\
&1&-\zeta_{v}^{-1}\xi_{1}\xi_{2}^{-1}
\end{pmatrix},
\end{array}
\end{equation*}
where the final factor above and $\sigma^{-1}\mat{1}{}{}{w_{n-1}}$ are both elements of $\GL_{n}(\of_{v})$. By the right-$\GL_{n}(\of_{v})$-invariance of $W_{v}$ we obtain
\begin{equation*}
\Hf_{v}(y;\zeta_{v},\xi_{v})=\abs{y}_{v}^{n-2}\int_{F_{v}^{n-2}}
\tilde{W}_{v}\left(\sigma A(x)
\right)dx.
\end{equation*}
where
\begin{equation*}
A(x):=
\begin{pmatrix}
1_{n-2}&x&-y\zeta_{v}^{-1}x\\
&1&-y\zeta_{v}^{-1}\\
&&1
\end{pmatrix}
\begin{pmatrix}
\xi^{-1}_{n}&&&&&\\
&\ddots &&&&\\
&&\xi^{-1}_{3}&&&\\
&&&&-y\zeta_{v}^{-1}\xi^{-1}_{2}&\\
&&&&&-\zeta_{v}\xi^{-1}_{1}
\end{pmatrix}.
\end{equation*}
Note that
\begin{equation*}
\sigma \begin{pmatrix}
1_{n-2}&x&-y\zeta_{v}^{-1}x\\
&1&-y\zeta_{v}^{-1}\\
&&1
\end{pmatrix}=\begin{pmatrix}
1&&-y\zeta_{v}^{-1}\\
&1_{n-2}&-y\zeta_{v}^{-1}x\\
&&1
\end{pmatrix}\sigma \begin{pmatrix}
1_{n-2}&x&\\
&1&\\
&&1
\end{pmatrix}.
\end{equation*}
\begin{itemize}
\item If $n=2$ then
\begin{equation*}
\Hf_{v}(y;\zeta_{v},\xi_{v})=\psi_{v}(y\zeta^{-1}_{v})\tilde{W}_{v}\left(\mat{-y\zeta_{v}^{-1}\xi_{2}^{-1}}{}{}{-\zeta_{v}\xi_{1}^{-1}}\right).
\end{equation*}
\item If $n\geq 3$ then, writing $x=\tran{(x_{1},\ldots,x_{n-2})}{}\in F_{v}^{n-2}$,
\begin{equation*}
\begin{array}{l}\vspace{0.1in}
\displaystyle\Hf_{v}(y;\zeta_{v},\xi_{v})=\abs{\xi_{2}\zeta_{v}}^{n-2}\abs{\xi_{3}\cdots\xi_{n}}^{-1}_{v}\\
\displaystyle\hspace{0.8in}\times\,\int_{F_{v}^{n-2}}
\psi_{v}(-\xi_{2}\xi_{3}^{-1}x_{n-2})\tilde{W}_{v}\left(
\tau
\begin{pmatrix}
1_{n-2}&x&\\
&1&\\
&&1
\end{pmatrix}
\right)dx,
\end{array}
\end{equation*}
where we define the matrix
\begin{equation}\label{eq:tau-def}
\tau=\begin{pmatrix}
&1&\\
1_{n-2}&&\\
&&1
\end{pmatrix}
\begin{pmatrix}
\xi_{n}^{-1}&&&\\
&\ddots&&\\
&&\xi_{3}^{-1}&\\
&&&-y\zeta_{v}^{-1}\xi_{2}^{-1}&\\
&&&&-\zeta_{v} \xi_{1}^{-1}
\end{pmatrix}.
\end{equation}
\end{itemize}

If $n\geq 3$ then $\Hf_{v}(y;\zeta_{v},\xi_{v})$ is a particular `hyper-Kloosterman integral' \cite[Def.\ 2.6]{stevens-kloostermann}. It remains to express it as a sum of $(n-1)$-dimensional hyper-Kloosterman sums. We remark that the following calculation and subsequent result is not contained in \cite{ichino-templier}. Nevertheless, to sidestep an amassment of notation, we refer to and amend their lemmata directly.

Now follow \cite[Def.\ 6.2]{ichino-templier} and define $\Kl^{\IT}(\psi_{t},\psi',\tau)$ as there (with the additional superscript!) for the following new variables: $\tau$, as in \eqref{eq:tau-def}; $\psi_{t}(u):=\psi_{v}(tut^{-1})$ for $u\in U_{n}(F_{v})$, $t\in T_{n}(F_{v})$; and $\psi'(a):=\psi_{v}(-\xi_{2}\xi_{3}^{-1}a)$ for $a\in F_{v}$. (See \cite[Def.\ 2.10]{stevens-kloostermann} for the original definition.) Then, \cite[Theorem 2.12]{stevens-kloostermann} implies
\begin{equation*}
\Hf_{v}(y;\zeta_{v},\xi_{v})=\abs{\xi_{2}\zeta_{v}}^{n-2}\abs{\xi_{3}\cdots\xi_{n}}^{-1}_{v} \sum_{t\in T_{n}(F_{v})/T_{n}(\of_{v})} \tilde{W}_{v}(t) \Kl^{\IT}(\psi_{t},\psi',t^{-1}\tau).
\end{equation*}
By \cite[Cor.\ 3.11]{stevens-kloostermann}, the Kloosterman sum $ \Kl^{\IT}(\psi_{t},\psi',t^{-1}\tau)$ factorises into an $(n-1)$-dimensional and a $1$-dimensional term by the decomposition of
\begin{equation*}
t^{-1}\tau=\begin{pmatrix}
&&&-y\zeta_{v}^{-1}\xi_{2}^{-1}t_{1}^{-1}&\\
\xi_{n}^{-1}t_{2}^{-1}&&&&\\
&\ddots&&&\\
&&\xi_{3}^{-1}t_{n-1}^{-1}&&\\
&&&&-\zeta_{v}\xi_{1}^{-1}t_{n}^{-1}\\
\end{pmatrix},
\end{equation*}
for $t=\diag(t_{1},\ldots,t_{n})\in T_{n}(F_{v})/T_{n}(\of_{v})$, into $\GL_{n-1}(F_{v})\times\GL_{1}(F_{v})$-parabolic factors. The $1$-dimensional factor is supported on $\abs{t_{n}}_{n}=\abs{\zeta_{v}\xi_{1}^{-1}}_{v}$, in which case it equals the constant $\psi_{v}(-\xi_{2}\xi_{3}^{-1})$.

We pick this moment to reorder summation by exchanging the variables $t_{i}$ with $t_{i}\xi_{n-i+2}$, for each $2\leq i \leq n-1$, and $t_{1}$ with $\xi_{2}t_{1}$. For the $(n-1)$-dimensional factor to be non-zero, by \cite[Th.\ 3.12]{stevens-kloostermann}, we require the determinant to be a unit, $\abs{t_{1}\cdots t_{n-1}}_{v}=\abs{y\zeta_{v}^{-1}}_{v}$, and each exposed sub-determinant to be integral. Checking the definition of an exposed sub-determinant \cite[Def.\ 3.3]{stevens-kloostermann}; one finds that this condition is equivalent to that $\abs{t_{i}}_{v}\geq 1$ for $2\leq i\leq n-1$. Collecting these observations we obtain the refined expression
\begin{equation*}
\begin{array}{l}\vspace{0.1in}
\displaystyle\Hf_{v}(y;\zeta_{v},\xi_{v})=\abs{\xi_{2}\zeta_{v}}^{n-2}\abs{\xi_{3}\cdots\xi_{n}}^{-1}_{v}\psi_{v}(-\xi_{2}\xi_{3}^{-1}) \\
\displaystyle\hspace{0.4in}\times\,\sum_{t\in T_{v}^{1}} \tilde{W}_{v}\left(\begin{pmatrix}
y\zeta_{v}^{-1}(\det t)^{-1}&&\\
&t&\\
&&\zeta_{v}
\end{pmatrix}
\begin{pmatrix}
&1&\\
&& w_{n-2}\\
1&&
\end{pmatrix}
\xi_{v}^{-1}\right) \Kl^{\IT}(\psi_{t'},\psi',\tau').
\end{array}
\end{equation*}
where $T_{v}^{1}$ was introduced in \S \ref{sec:hyper-kloosterman} and we define the variables
\begin{equation*}
t'=\mat{y\zeta_{v}^{-1}(\det t)^{-1}}{}{}{t}\begin{pmatrix}
\xi_{2}^{-1}&&&\\
&\xi_{n}^{-1}&&\\
&&\ddots&\\
&&&\xi_{3}^{-1}
\end{pmatrix};\quad 
\tau'=\mat{}{-\det t}{t^{-1}}{}.
\end{equation*}
The final step is to compute the terms $\Kl^{\IT}(\psi_{t'},\psi',\tau')$. This is executed recursively via \cite[Prop.\ 6.4]{ichino-templier}. Applying \cite[Cor.\ 6.5 \& Lem.\ 6.6]{ichino-templier} with the parameters $\tau'$ and $t'$ we obtain
\begin{equation*}
\Kl^{\IT}(\psi_{t'},\psi',\tau')=\sum_{x_{n-1}\in \Lambda_{t_{n-1}}} \psi_{v}(-\xi_{2}\xi_{3}^{-1}x_{n-1})\Kl^{\IT}(\psi_{t''},\psi'',\tau'')
\end{equation*}
where $\Lambda_{t_{i}}$ was introduced in \S \ref{sec:hyper-kloosterman}, $\psi''(a):=\psi_{v}(-\xi_{3}\xi_{4}^{-1}a)$ for $a\in F_{v}$, and we define the new variables
\begin{equation*}
\begin{array}{l}\vspace{0.1in}
t''=\begin{pmatrix}
y\zeta_{v}^{-1}(\det t)^{-1}\xi_{2}^{-1}&&&\\
&t_{2}\xi_{n}^{-1}&&\\
&&\ddots&\\
&&&t_{n-2}\xi_{4}^{-1}
\end{pmatrix};\\
\tau''=
\begin{pmatrix}
&&&x_{n-1}^{-1}(\det t)\\
t_{2}^{-1}&&&\\
&\ddots&&\\
&&t_{n-2}^{-1}&
\end{pmatrix}
.
\end{array}
\end{equation*}
Note that we have multiplied the top right-hand corner of $\tau'$ by $-x_{n-1}^{-1}$ to obtain the top right-hand corner of $\tau''$. Continuing recursively, terminating the evaluation with \cite[Lem.\ 6.6]{ichino-templier}, we deduce that
\begin{equation*}
\begin{array}{l}\vspace{0.1in}
\displaystyle \Kl^{\IT}(\psi_{t'\xi^{-1}},\psi',\tau')=\displaystyle\sum_{x_{n-1}\in \Lambda_{t_{n-1}}}\cdots\sum_{x_{2}\in \Lambda_{t_{2}}} \psi_{v}(-\xi_{2}\xi_{3}^{-1}x_{n-1})\prod_{j=2}^{n-2}\psi_{v}(\xi_{n-j+1}\xi_{n-j+2}^{-1}x_{j})\\
\hfill\times\,\psi_{v}\left((-1)^{n-1}y\zeta_{v}^{-1}\xi_{2}^{-1}\xi_{n}x_{n-1}^{-1}\cdots x_{2}^{-1}\right),
\end{array}
\end{equation*}
which is equal to $(\abs{\xi_{2}\zeta_{v}}^{n-2}\abs{\xi_{3}\cdots\xi_{n}}^{-1}_{v}\psi_{v}(-\xi_{2}\xi_{3}^{-1}) )^{-1}\Kl_{v}(y,t;\zeta,\xi)$, as defined in \S \ref{sec:hyper-kloosterman}, on the nose. (To be compared with \cite[Cor.\ 6.7]{ichino-templier}.) Note that, unlike \cite[Prop.\ 6.4]{ichino-templier}, we subsume the cases $\abs{t_{i}}_{v}>1$ and $\abs{t_{i}}_{v}=1$ into one via our definition of the sets $\Lambda_{t_{i}}$ in \eqref{eq:lambda-xi}. In particular, the above equality holds for $\abs{t_{2}}_{v}=1$ since then we have
\begin{equation*}
\abs{y\zeta_{v}^{-1}\xi_{2}^{-1}\xi_{n}x_{n-1}^{-1}\cdots x_{2}^{-1}}_{v}=\abs{y\zeta_{v}^{-1}\xi_{2}^{-1}\xi_{n}(\det t)^{-1}}_{v}\leq 1
\end{equation*}
from the support of $\tilde{W}_{v}$ (see \eqref{eq:whittaker-supp-xi}); indeed, $\psi_{v}$ is trivial on $\of_{v}$. As a last remark we simply note that for all places $v\not\in Q\cup R\cup S$ we have $$\Hf_{v}(y;\zeta_{v},\xi_{v})=\Hf_{v}(y;1,1)= \tilde{W}_{v}(a(y)).$$

Now consider the remaining places $v\in Q\cup S$. Theorem \ref{thm:voronoi-general} shall follow duly from the observation that 
$\Hf_{v}(y;\zeta_{v},\xi_{v})=\Bc_{\pi_{v},\Phi_{v}^{\zeta_{v}}}(y).$ 
Or argument follows \cite[\S 2.7]{ichino-templier}, except that we identify a different element of the Whittaker model upon shifting by the matrix $n(\zeta_{v})$.
By assumption, the function $\Phi_{v}^{\zeta_{v}}$ satisfies
\begin{equation*}
\Phi_{v}^{\zeta_{v}}(y)=\psi_{v}(y\zeta_{v})\Phi_{v}(y)=W_{v}'(n(\zeta_{v}y)a(y))=
W_{v}'(a(y)n(\zeta_{v})).
\end{equation*}
We denote this right-translate by $W_{v}^{\zeta_{v}}=\rho(n(\zeta_{v}))W_{v}'$, thus defining a new element $W^{\zeta}_{v}\in\Wh(\pi_{v},\psi_{v})$ satisfying $\Phi_{v}^{\zeta_{v}}(y)=W^{\zeta}_{v}(a(y))$ for all $y\in\Fx_{v}$. Now \cite[Lemma 2.3]{ichino-templier} applies to $\Phi_{v}^{\zeta_{v}}$, thus determining its (unique) transform so that 
\begin{equation*}
\Bc_{\pi_{v},\Phi_{v}^{\zeta_{v}}}(y)=\Hf_{v}(y;\zeta_{v},\xi_{v})
\end{equation*}
for all $y\in\Fx_{v}$. We remark that the hypotheses of \cite[Lemma 2.3]{ichino-templier} demand that $\Phi_{v}^{\zeta_{v}}\in\Ccinf(\Fx_{v})$; in the case $v\in S$ we have $\Phi_{v}^{\zeta_{v}}(y)=\psi_{v}(y\zeta_{v})\phi_{v}(y)W_{v}(a(y))$ is again smooth (resp.\ locally constant) of compact support. However, the argument there applies to all such functions $y\mapsto W_{v}(a(y))$ for any $W_{v}\in \Wh(\pi_{v},\psi_{v})$, in particular in the case $v\in Q$.



\end{proof}






\section{Explicit Bessel transforms}\label{sec:bessel-explicit}

Here we give an explicit description of the generalised Bessel transforms $\Bc_{\pi_{v},\Phi}$, as introduced in \S \ref{sec:bessel-existance}. We consider their analytic behaviour at all places and, for non-archimedean places $v$, we detail the `joint ramification' case with $\Phi=\Phi_{v}^{\zeta_{v}}$ for $\abs{\zeta_{v}}_{v}>1$. In this section let us retain the notation of \S \ref{sec:general}.

\subsection{The Mellon inversion formula for local fields}

Let $v$ be any place of $F$ and consider a Bruhat--Schwartz function $\Phi\colon \Fx_{v}\rightarrow\C$. The fundamental principle of harmonic analysis on locally compact abelian groups \cite{ramakrishnan-valenza} is to study the frequencies of $\Phi$ contained in the unitary dual group $\Fxh$; this is the set of continuous\footnote{Without further mention, any character on a locally compact group is assumed continuous.}, unitary characters on $\Fx_{v}$.

\begin{defn}\label{def:mellin-transform}
Let $\mu\in\Fxh_{v}$. The \textit{Mellin transform} of $\Phi$ is given by
\begin{equation*}
\Me(\Phi,\mu)=\int_{\Fx_{v}}\Phi(y)\mu(y)\dxy
\end{equation*}
for a Haar measure $\dxy$ on $\Fx_{v}$. Similarly, for a Schwartz function $\tilde{\Phi}\colon \Fxh_{v}\rightarrow \C$, the \textit{inverse Mellin transform} of $\tilde{\Phi}$ evaluated at $y\in \Fx_{v}$ is given by
\begin{equation*}
\Me^{-1}(\Phi,\mu)=\int_{\Fxh_{v}}\tilde{\Phi}(\mu)\mu(y)^{-1} d\mu
\end{equation*}
for a Haar measure $d\mu$ on $\Fxh_{v}$.
\end{defn}

\begin{prop}[Mellin inversion]\label{prop:mellin-inversion}
There exist `self-dual' normalisations of the pair of Haar measures $(\dxy,d\mu)$ such that
\begin{equation*}
\Me^{-1}\circ\Me=\Me\circ\Me^{-1}=\Id.
\end{equation*}
\end{prop}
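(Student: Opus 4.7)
The strategy is to reduce the statement to the classical Pontryagin duality theorem for locally compact abelian groups, applied to the locally compact abelian group $F_v^{\times}$ whose Pontryagin dual is exactly $\widehat{F}_v^{\times}$ as defined in the text. Given any Haar measure $d^{\times}y$ on $F_v^{\times}$, general theory (as in Ramakrishnan--Valenza) produces a unique dual Haar measure $d\mu$ on $\widehat{F}_v^{\times}$ for which the Plancherel map $\Me \colon L^{2}(F_v^{\times}, d^{\times}y) \to L^{2}(\widehat{F}_v^{\times}, d\mu)$ is an isometry, and Fourier inversion then holds pointwise for sufficiently regular functions. This pair $(d^{\times}y, d\mu)$ is the self-dual normalisation claimed in the statement; the content of the proposition is that $\Me$ and $\Me^{-1}$ are mutually inverse on appropriate Schwartz-type subspaces.

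To make this concrete I would use the structure theorem for $F_v^{\times}$ to factor $\Me$ into a product of more familiar transforms. If $v$ is non-archimedean, the splitting $F_v^{\times} \cong \varpi_v^{\Z} \times \ofx_v$ dualises to $\widehat{F}_v^{\times} \cong S^{1} \times \widehat{\ofx}_v$, whereupon $\Me$ decomposes as the product of a Fourier-series expansion on $S^{1}$ and discrete Fourier analysis on the compact totally disconnected group $\ofx_v$. If $F_v = \R$, the splitting $\R^{\times} \cong \{\pm 1\} \times \R_{>0}$ dualises to $\{\pm 1\}^{\vee} \times i\R$ and $\Me$ becomes the classical Mellin transform on $\R_{>0}$ (equivalently, the Fourier transform on $\R$ via $y \mapsto \log|y|$) combined with sign-parity Fourier analysis; the case $F_v = \C$ is analogous with $\C^{\times} \cong S^{1} \times \R_{>0}$. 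In each of these building blocks the inversion identity $\Me^{-1}\Me = \Me\Me^{-1} = \Id$ with its self-dual measure normalisation is classical, and by Fubini these identities combine into the claimed global identity on $F_v^{\times}$.

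The non-trivial point is to identify the function spaces on which these identities hold pointwise. On the $F_v^{\times}$-side the natural class is the Bruhat--Schwartz functions $\Ccinf(\Fx_v)$; on the $\widehat{F}_v^{\times}$-side one takes the corresponding Schwartz space, namely functions on $\widehat{F}_v^{\times}$ that are rapidly decreasing in the continuous parameters (the archimedean Mellin variable and, in the non-archimedean case, the circle-coordinate after restriction to each component of $\widehat{\ofx}_v$) and supported on finitely many components of the discrete direct factors (in the non-archimedean case). For $\Phi \in \Ccinf(\Fx_v)$, integration by parts (or locally constant arguments) shows $\Me(\Phi, \cdot)$ does lie in this space, so that the inverse integral in Definition \ref{def:mellin-transform} is absolutely convergent and may be evaluated componentwise.

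The main obstacle is therefore essentially bookkeeping: one must specify precisely what is meant by a Schwartz function on $\widehat{F}_v^{\times}$ in each of the archimedean and non-archimedean cases, check that the Mellin transform maps $\Ccinf(\Fx_v)$ into (and bijectively onto a subspace of) this Schwartz space, and track the compatibility of measures through the structural decomposition above. None of these steps requires new ideas; the assertion is an application of harmonic analysis on locally compact abelian groups once the structural identifications are in place.
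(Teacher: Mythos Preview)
Your proposal is correct and takes essentially the same approach as the paper: both reduce the statement to Pontryagin duality for the locally compact abelian group $F_v^\times$, citing Ramakrishnan--Valenza. The paper's proof is in fact nothing more than the citation ``See \cite[\S 3]{ramakrishnan-valenza} for instance,'' so your elaboration via the structural decomposition of $F_v^\times$ and the identification of the relevant Schwartz spaces is more detailed than what the paper itself provides, but follows the same route.
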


\begin{proof}
See \cite[\S 3]{ramakrishnan-valenza} for instance.
\end{proof}

\subsubsection{Archimedean Mellin inversion}

Suppose that $v$ is real so that $F_{v}=\R$. Any unitary character on $\Rx$ is of the form $\mu=\sgn^{r}\abs{\,\cdot\,}_{v}^{it}\in\hat{\R}^{\times}$ for $r\in \{0,1\}$ and $t\in \R$. Let us normalise
\begin{equation*}
\Me(\Phi,\mu)=\int_{\Rx}\Phi(y)\sgn(y)^{r}\abs{y}_{v}^{it}\,\dxy
\end{equation*}
and
\begin{equation*}
\Me^{-1}(\tilde{\Phi},y)=\frac{1}{4\pi i}\sum_{r\in\{0,1\}}\int_{\R}\tilde{\Phi}(\sgn^{r}\abs{\,\cdot\,}_{v}^{it})\sgn(y)^{r}\abs{y}_{v}^{-it}\,dt
\end{equation*}
where $\dxy=\sgn(y)y^{-1}dy$ and $dy$, $dt$ both denote the Lebesgue measure on $\R$. With this choice of Haar measures, Proposition \ref{prop:mellin-inversion} holds.

\begin{rem}[The Mellin transform for $\R_{>0}$]
For $s\in\C$ and $\phi\in \Ccinf(\R_{>0})$ let $$\me(\phi,s):=\int_{0}^{\infty}\phi(y)y^{s-1}\,dy.$$ Defining $\Phi(y)=\phi(y)y^{\sigma}$ for $y>0$, with $\sigma=\Re(s)$ sufficiently large, and $\Phi(y)=0$ for $y\leq 0$ we have $\Me (\Phi,\sgn^{r}\abs{\,\cdot\,}_{v}^{\Im(s)})=\me(\phi,s)$, constant on $r\in \{0,1\}$. Proposition \eqref{prop:mellin-inversion} implies the usual Mellin inversion formula
\begin{equation}\label{eq:classical-mellon}
\phi(y)=\frac{1}{2\pi i}\int_{\Re(s)=\sigma}\me(\phi,s)\,y^{-s}\,ds.
\end{equation}
\end{rem}

Suppose that $v$ is complex so that $F_{v}=\C$. Expressing a complex number $z\in \Cx$ in polar coordinates, $z=\abs{z}_{v}^{1/2}e^{i\arg(z)}$, the unitary dual of $\Cx$ may be identified as $\hat{\C}^{\times}=\hat{\R}_{>0} \times \widehat{\R/\Z} \isom \R \times \Z.$ We omit the details of the complex case in this article.

\subsubsection{Non-archimedean Mellin inversion}

Let $v$ be non-archimedean. (Recall the notation for $F_{v}$ defined in \S \ref{sec:local-global-fields}.) We refer to Taibleson's book \cite[\S II.4]{taibleson} for any background material. Define
\begin{equation*}
\Xf_{v}=\lbrace\, \chi\colon\Fx_{v}\rightarrow\Cx \,\vert\, \chi(\varpi_{v})=1 \, \rbrace\subset \Fxh.
\end{equation*}
Then $\Xf_{v}$ is a discrete group, isomorphic to the unitary dual of $\ofx_{v}$. Considering the `polar coordinates' $y=u\varpi_{v}^{v(y)}$ for each $y\in \Fx_{v}$, one identifies
\begin{equation*}
\Fxh_{v}= \Xf_{v} \times \hat{\Z}\isom  \Xf_{v}\times \R/\Z.
\end{equation*}
Explicitly, any unitary character $\mu\in\Fxh_{v}$ is of the form $\mu=\chi\abs{\,\cdot\,}_{v}^{it}$ for some $\chi\in\Xf_{v}$ and $t\in \R$ satisfying $-\pi/\log q_{v}<t\leq \pi/\log q_{v}$. We normalise measures by
\begin{equation*}
\Me(\Phi,\mu) = \sum_{k\in\Z}q_{v}^{ikt}\int_{\ofx_{v}}\Phi(y\varpi_{v}^{-k})\chi(y)\, \dxy
\end{equation*}
and
\begin{equation*}
\Me^{-1}(\tilde{\Phi},y) = \frac{\log q_{v}}{2\pi}\sum_{\chi\in\Xf_{v}}\chi(y)^{-1}\int_{-\pi/\log q_{v}}^{\pi/\log q_{v}}\tilde{\Phi}(\chi\abs{\,\cdot\,}_{v}^{it})\abs{y}_{v}^{-it}\, dt.
\end{equation*}

\subsection{Explicating the Bessel transform via Mellin inversion}

We now give a general and explicit description of the Bessel transforms introduced in \S \ref{sec:bessel-existance}. This expression is obtained by applying the Mellin inversion formula to the identity \eqref{eq:duality-equation} between $\Phi$ and its dual $\Bc_{\pi_{v},\Phi}$. Let $s=\sigma+ it\in\C$ where, as in \S \ref{sec:bessel-existance}, we assume $\sigma$ is sufficiently large. The left-hand side of \eqref{eq:duality-equation} is precisely the Mellin transform of the function $\abs{\,\cdot\,}_{v}^{\sigma-\frac{n-1}{2}}\cdot\Bc_{\pi_{v},\Phi}$ evaluated at $\chi^{-1}\abs{\,\cdot\,}_{v}^{it}\in\Fxh_{v}$. Explicitly,
\begin{equation}\label{eq:mellon-of-bessel-gen}
\begin{array}{l}\vspace{0.1in}
\displaystyle\Me(\abs{\,\cdot\,}_{v}^{\sigma-\frac{n-1}{2}}\cdot\Bc_{\pi_{v},\Phi},\chi^{-1}\abs{\,\cdot\,}_{v}^{it})\,=\\
\displaystyle\hspace{1in}\chi(-1)^{n-1}\gamma(1-s,\chi\pi_{v},\psi_{v})\int_{\Fx_{v}}\Phi(y)\chi(y)\abs{y}_{v}^{1-s-\frac{n-1}{2}}\dxy.
\end{array}
\end{equation}
Bifurcating according to the place $v$, we proceed by using Proposition \ref{prop:mellin-inversion} to invert this expression: we substitute \eqref{eq:mellon-of-bessel-gen} into the identity
\begin{equation}\label{eq:inversion-identity}
\Bc_{\pi_{v},\Phi}(y)=\abs{y}_{v}^{\frac{n-1}{2}-\sigma}\Me^{-1}(\mu\mapsto\Me(\abs{\,\cdot\,}_{v}^{\sigma-\frac{n-1}{2}}\cdot\Bc_{\pi_{v},\Phi},\mu),y).
\end{equation}

\subsubsection{Archimedean Bessel transforms and estimates}\label{sec:arch-bessel}

Let $v$ be a real-archimedean place of $F$ so that $F_{v}=\R$. Solving \eqref{eq:inversion-identity} with \eqref{eq:mellon-of-bessel-gen}, for all $y\in\Rx$ we have
\begin{equation}\label{eq:mellon-of-bessel-real}
\begin{array}{l}\vspace{0.15in}
\displaystyle\Bc_{\pi_{v},\Phi}(y)=
\frac{1}{4\pi i}\sum_{r\in\{0,1\}}(-1)^{r(n-1)}\sgn(y)^{r}\int_{\Re(s)=\sigma}\gamma(1-s,\sgn^{r}\pi_{v},\psi_{v})
\abs{y}_{v}^{\frac{n-1}{2}-s}\hspace{0.3in}\\
\displaystyle\hfill\times\,\int_{\Rx}\Phi(x)\sgn(x)^{r}\abs{x}_{v}^{1-s-\frac{n-1}{2}}\dxx\, ds.
\end{array}
\end{equation}

To quote Kowalski--Ricotta \cite[\S 3]{kowalski-ricotta} on the analytic behaviour of $\Bc_{\pi_{v},\Phi}$, we assume that $\Phi$ is compactly supported in $\R_{>0}$. The Bessel transform may then be expressed in terms of the classical Mellin transform;
\begin{equation*}
\begin{array}{l}\vspace{0.15in}
\displaystyle\Bc_{\pi_{v},\Phi}(y)=
\frac{1}{2}\sum_{r\in\{0,1\}}(-1)^{r(n-1)}\sgn(y)^{r}\\
\displaystyle\hspace{1in}\times\,\int_{\Re(s)=\sigma}\me\left(\Phi,1-s-\tfrac{n-1}{2}\right)\gamma(1-s,\sgn^{r}\pi_{v},\psi_{v})
\abs{y}_{v}^{\frac{n-1}{2}-s}ds.
\end{array}
\end{equation*}

Decomposing the operator $\Phi\mapsto\Bc_{\pi_{v},\Phi}$ into its $r$-summands, we find that they are unitary with respect to the $L^{2}$-norm on $\R_{>0}$ computed with respect to the Lebesgue measure \cite[Prop.\ 3.3]{kowalski-ricotta}; this depends on the parity of $n$. In \cite[Cor.\ 3.6]{kowalski-ricotta}, estimates are given for the sum of $\Bc_{\pi_{v},\Phi}$ juxtaposed with the Fourier coefficients of an automorphic form in an interval. Moreover, we record the following asymptotic estimates of \cite[Prop.\ 3.5]{kowalski-ricotta}.\footnote{In the notation of \cite[\S 3]{kowalski-ricotta}, our transform $\Bc_{\pi_{v},\Phi}$ coincides with their function ``$\Bc_{\alpha_{\infty}(f)}[w]$'' by assigning $w=\Phi$.}

\begin{prop}[Kowalski--Ricotta \cite{kowalski-ricotta}]\label{prop:kowalski-ricotta}
Suppose that the support of $\Phi$ is a compact subset of $\R_{>0}$. Then if $0< y \leq 1$ we have $$\Bc_{\pi_{v},\Phi}(y) \ll y^{-\left(\frac{1}{2}+\frac{1}{n^{2}+1}\right)}.$$ And for all $ y,A \in \R_{>0}$ we have $$\Bc_{\pi_{v},\Phi}(y)\ll_{A,\pi_{v},\Phi}y^{-A}.$$
\end{prop}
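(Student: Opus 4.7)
The plan is to apply the explicit Mellin-inversion representation of $\Bc_{\pi_{v},\Phi}$ displayed immediately above and shift the contour $\Re(s)=\sigma$ in a manner tailored to each bound. Two analytic inputs are essential. First, since $\Phi$ is smooth with compact support in $\R_{>0}$, its classical Mellin transform $\me(\Phi,1-s-\tfrac{n-1}{2})$ is entire in $s$ and, by repeated integration by parts, decays faster than any polynomial in $|\Im(s)|$ on any fixed vertical line, uniformly for $\sigma$ in compact sets. Second, by Stirling's asymptotics applied to the $\Gamma_{\R}$-factors comprising $L(s,\sgn^{r}\tilde{\pi}_{v})/L(1-s,\sgn^{r}\pi_{v})$, the quotient $\gamma(1-s,\sgn^{r}\pi_{v},\psi_{v})$ grows at most polynomially in $|\Im(s)|$ on any vertical line avoiding its poles. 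These poles come only from $L(s,\sgn^{r}\tilde{\pi}_{v})$ and occur at $s=\mu_{j}-2k$ for $k\in\Z_{\geq 0}$ and $1\leq j\leq n$; by the Luo--Rudnick--Sarnak bound towards Ramanujan for $\GL_{n}$ one has $\max_{j}|\Re(\mu_{j})|\leq \tfrac{1}{2}-\tfrac{1}{n^{2}+1}$, so every such pole lies in the half-plane $\Re(s)\leq \tfrac{1}{2}-\tfrac{1}{n^{2}+1}$.

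For the second estimate, fix $A>0$ and shift the contour to $\Re(s)=\tfrac{n-1}{2}+A$. Since this line sits strictly to the right of every pole of $\gamma$, no residues are picked up; absolute convergence of the shifted contour integral (super-polynomial decay of $\me$ against polynomial growth of $\gamma$) then yields
$$|\Bc_{\pi_{v},\Phi}(y)|\leq C_{A,\pi_{v},\Phi}\cdot y^{\frac{n-1}{2}-\sigma}=C_{A,\pi_{v},\Phi}\cdot y^{-A}$$
uniformly for all $y>0$.

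For the first estimate, shift instead to $\sigma_{0}:=\tfrac{n}{2}+\tfrac{1}{n^{2}+1}$. Since $\sigma_{0}>\tfrac{1}{2}-\tfrac{1}{n^{2}+1}$ for every $n\geq 2$, again no poles are crossed, and the same absolute-convergence argument produces
$$|\Bc_{\pi_{v},\Phi}(y)|\leq C_{\pi_{v},\Phi}\cdot y^{\frac{n-1}{2}-\sigma_{0}}=C_{\pi_{v},\Phi}\cdot y^{-(\frac{1}{2}+\frac{1}{n^{2}+1})},$$
which specialises to the desired bound for $0<y\leq 1$. The main technical obstacle is the precise Stirling analysis of $\gamma$: one must verify that the polynomial bound on $|\gamma(1-s,\sgn^{r}\pi_{v},\psi_{v})|$ in $|\Im(s)|$ holds with exponent and implicit constant uniform as $\sigma$ varies over any compact interval to the right of $\tfrac{1}{2}-\tfrac{1}{n^{2}+1}$; once paired with the super-polynomial decay of $\me(\Phi,\cdot)$, this legitimises both contour shifts and controls the implicit constants.
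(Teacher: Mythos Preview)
The paper does not supply a proof of this proposition at all: it is simply quoted from \cite[Prop.\ 3.5]{kowalski-ricotta} and recorded for later use. So there is no ``paper's own proof'' to compare against.

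That said, your argument is correct and is essentially the one Kowalski--Ricotta themselves give. The two analytic inputs you identify---super-polynomial decay of $\me(\Phi,\cdot)$ in the imaginary direction (from smoothness and compact support of $\Phi$ in $\R_{>0}$) and polynomial growth of $\gamma(1-s,\sgn^{r}\pi_{v},\psi_{v})$ via Stirling---are exactly what is needed to justify the contour shifts. Your use of the Luo--Rudnick--Sarnak bound $\max_{j}\abs{\Re(\mu_{j})}\leq \tfrac{1}{2}-\tfrac{1}{n^{2}+1}$ to locate the rightmost pole of the archimedean gamma factor is precisely why the exponent $\tfrac{1}{2}+\tfrac{1}{n^{2}+1}$ appears, and your choice $\sigma_{0}=\tfrac{n}{2}+\tfrac{1}{n^{2}+1}$ sits safely to the right of that pole for every $n\geq 2$. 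One small remark: the precise pole locations of $L(s,\sgn^{r}\tilde{\pi}_{v})$ depend on the parities entering the $\Gamma_{\R}$-factors, so ``$s=\mu_{j}-2k$'' is a slight oversimplification, but the only thing you actually use is the bound on the real parts, which is unaffected.
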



\subsubsection{Non-archimedean Bessel transforms}
Let $v$ be a non-archimedean place of $F$. The simultaneous solution of \eqref{eq:mellon-of-bessel-gen} and \eqref{eq:inversion-identity} implies that for all $y\in\Fx_{v}$ we have
\begin{equation}\label{eq:mellon-of-bessel-p-adic}
\begin{array}{l}\vspace{0.2in}
\displaystyle \Bc_{\pi_{v},\Phi}(y)=
\dfrac{ \log q_{v}}{ 2\pi}\, \sum_{\chi\in\Xf_{v}}\chi(-1)^{n-1}\chi(y)\int_{\sigma-\pi/\log q_{v}}^{\sigma+\pi/\log q_{v}}\gamma(1-s,\chi\pi_{v},\psi_{v})\,\abs{y}_{v}^{\frac{n-1}{2}-s}\hspace{0.3in}\\
\displaystyle\hspace{1.5in}
\hfill\times\, \int_{\Fx_{v}}\Phi(x)\chi(x)\abs{x}_{v}^{1-s-\frac{n-1}{2}}\,\dxx\, ds.\\
\end{array}
\end{equation}
This is the most general description of the Bessel transform.

\subsection{Non-archimedean Bessel transforms in detail}\label{sec:bessel-special-cases}

We now consider the Bessel transforms $\Bc_{\pi_{v},\Phi_{v}^{\zeta_{v}}}$, at a non-archimedean place $v$. In the general case, we give a bound for the support. However, in practice, we shall not always require the full generality of Theorem \ref{thm:voronoi-general}. Making an assumption on the local factor $\pi_{v}$, we give a refined formula for the Bessel transform. We then show how to choose the test functions $\phi_{v}$ to determine a Vorono\u{\i} formulae on arithmetic progressions.

A natural factor occurring at places for which $\abs{\zeta_{v}}_{v}>1$ is the \textit{Gau\ss}\ sum:
\begin{equation}\label{eq:gauss-sum}
\Ga_{v}(a,\chi):=\int_{\ofx_{v}}\psi_{v}(ay)\,\chi(y)\,\dxy.
\end{equation}
for $a\in\Fx_{v}$ and $\chi\in\Xf_{v}$.

\begin{lem}\label{lem:gauss-sum}
Let $a\in\Fx_{v}$ and $\chi\in\Xf_{v}$. 
If $a(\chi)=0$ (or equivalently $\chi=1$) then
\begin{equation*}
\Ga_{v}(a,1)=\left\lbrace\begin{array}{ll}\vspace{0.05in}
\quad 1& \ifs \abs{a}_{v}\leq 1 \\\vspace{0.05in}
\frac{1}{1-q_{v}}& \ifs \abs{a}_{v}= q_{v}\\
\quad 0 & \ifs \abs{a}_{v}>q_{v} .
\end{array}\right.
\end{equation*}
If $a(\chi)>0$ then $\Ga_{v}(a,\chi)=0$ unless $\abs{a}_{v}=q_{v}^{-a(\chi)}$, in which case
\begin{equation*}
\Ga(a,\chi)=\zeta(1)\abs{a}^{-1/2}\chi(a)^{-1}\varepsilon(1/2,\chi^{-1}).
\end{equation*}
\end{lem}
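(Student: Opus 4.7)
The plan is to reduce the integral to a finite exponential sum by exploiting the local constancy of $\psi_v(ay)\chi(y)$ on $\ofx_v$, and then to apply orthogonality of characters; in the surviving case one recognises the result as a classical Gauss sum identifiable with $\varepsilon(1/2,\chi^{-1})$ via Tate's local functional equation.

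First, since $\chi\in\Xf_v$ satisfies $\chi(\varpi_v)=1$, the condition $a(\chi)=0$ forces $\chi=\triv$. If $\abs{a}_v\leq 1$ then $\psi_v(ay)=1$ on $\of_v$, giving $\Ga_v(a,\triv)=\Vol(\ofx_v)=1$ by our measure normalisation. Otherwise, set $N=-v(a)\geq 1$ and decompose $\ofx_v$ into cosets of $1+\pf_v^N$, on which $\psi_v(a\cdot)$ is constant; using $\Vol(1+\pf_v^N)=q_v^{1-N}/(q_v-1)$ the integral becomes
\begin{equation*}
\Ga_v(a,\triv)=\frac{q_v^{1-N}}{q_v-1}\biggl(\sum_{y\in\of_v/\pf_v^N}\psi_v(ay)-\sum_{z\in\of_v/\pf_v^{N-1}}\psi_v(a\varpi_v z)\biggr).
\end{equation*}
For $N\geq 2$ both additive sums vanish by orthogonality; for $N=1$ only the second survives (equal to $1$), producing $\Ga_v(a,\triv)=-1/(q_v-1)=1/(1-q_v)$.

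For the ramified case set $c:=a(\chi)>0$ and $m:=-v(a)$, and take $N:=\max(c,m)$ so that both $\chi$ and $\psi_v(a\cdot)$ are constant on cosets of $1+\pf_v^N$ in $\ofx_v$. The integral collapses to
\begin{equation*}
\Ga_v(a,\chi)=\frac{q_v^{1-N}}{q_v-1}\sum_{y\in(\of_v/\pf_v^N)^\times}\psi_v(ay)\chi(y).
\end{equation*}
If $m<c$ (so $N=c$), pick $u_0\in 1+\pf_v^{c-1}$ with $\chi(u_0)\neq 1$; the estimate $v(a(u_0^{-1}-1))\geq(c-1)-m\geq 0$ gives $\psi_v(ayu_0^{-1})=\psi_v(ay)$, so the substitution $y\mapsto yu_0^{-1}$ shows the sum equals $\chi(u_0)^{-1}$ times itself, forcing it to vanish. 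If $m>c$ (so $N=m$), reparametrise $y=y_0(1+w)$ with $y_0$ ranging over representatives modulo $1+\pf_v^c$ and $w\in\pf_v^c/\pf_v^m$; the inner $w$-sum is that of a nontrivial additive character on $\pf_v^c/\pf_v^m$ (nontrivial since $v(ay_0)+c=c-m<0$), which vanishes by orthogonality.

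Only the balanced case $m=c$ survives. Writing $a=u\varpi_v^{-c}$ with $u\in\ofx_v$ and substituting $y\mapsto yu^{-1}$ extracts the factor $\chi(u)^{-1}=\chi(a)^{-1}$ via $\chi(\varpi_v)=1$; the residual sum
\begin{equation*}
\tau(\chi,\psi_v):=\sum_{y\in(\of_v/\pf_v^c)^\times}\psi_v(y/\varpi_v^c)\,\chi(y)
\end{equation*}
is a classical Gauss sum, proportional to $\varepsilon(1/2,\chi^{-1})$ by Tate's local functional equation in the normalisation of \S\ref{sec:euler-factors}. The main bookkeeping hurdle lies in reconciling the three normalisations in play---the Haar measure $\dxy$ (pinned by $\Vol(\ofx_v)=1$), the local zeta $\zeta(1)=(1-q_v^{-1})^{-1}$, and the epsilon factor defined via \eqref{eq:def:gamma-factors}---which when matched yields the stated proportionality.
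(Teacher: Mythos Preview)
Your proof is correct and follows the standard route: orthogonality of additive and multiplicative characters to isolate the balanced case $-v(a)=a(\chi)$, followed by identification of the residual Gauss sum with the local $\varepsilon$-factor via Tate's functional equation. The paper itself does not give a proof but simply cites \cite[Lemma 2.3]{corbett-saha} and \cite[(7.6) \& Lem.\ 7-4]{ramakrishnan-valenza} for this well-known computation; your argument is precisely the elementary proof those references encode, so there is nothing substantive to compare.

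Two minor remarks. First, for $c=1$ your phrase ``$u_0\in 1+\pf_v^{c-1}$'' should be read with the usual convention $1+\pf_v^0=\ofx_v$; you use this implicitly and it causes no trouble. Second, you are right that the only delicate point is the final bookkeeping matching $\tau(\chi,\psi_v)$ to $\zeta(1)\abs{a}^{-1/2}\varepsilon(1/2,\chi^{-1})$ under the normalisation $\Vol(\ofx_v,\dxy)=1$; since the paper defers this to the literature as well, your treatment is at the same level of detail. (Incidentally, the support condition in the statement appears to contain a sign typo: the nonvanishing case is $\abs{a}_v=q_v^{a(\chi)}$, i.e.\ $v(a)=-a(\chi)$, exactly as your argument establishes.)
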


\begin{proof}
This result is well-known. For instance, a proof is given by the author in \cite[Lemma 2.3]{corbett-saha} using \cite[(7.6) \& Lem.\ 7-4]{ramakrishnan-valenza}.
\end{proof}

\subsubsection{Support of the Bessel transforms}

Let $v\in Q\cup S$ and let $\psi_{v}$, $W_{v}$, $\Phi_{v}$, and $\Phi_{v}^{\zeta_{v}}$ be as in Theorem \ref{thm:voronoi-general}. There are no additional assumptions in the following, in particular not on $\pi_{v}$.

\begin{prop}\label{prop:bessel-support}
Let $y\in \Fx_{v}$. If $\abs{\zeta_{v}}_{v}\leq 1$ then $\Bc_{\pi_{v},\Phi_{v}^{\zeta_{v}}}(y)= 0 $ whenever $\abs{y}_{v}> q_{v}^{n+a(\pi_{v})}$. If $\abs{\zeta_{v}}_{v}> 1$ then $\Bc_{\pi_{v},\Phi_{v}^{\zeta_{v}}}(y)= 0 $ whenever $\abs{y}_{v}> \abs{\zeta_{v}}_{v}^{n-1} q_{v}^{n+\max\{a(\pi_{v}), -v(\zeta_{v})\}}.$
\end{prop}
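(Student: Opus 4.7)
The plan is to start from the explicit Mellin inversion formula \eqref{eq:mellon-of-bessel-p-adic} and re-express the inner $s$-integral as a contour integral in the variable $z=q_v^{-s}$. After this substitution the period integral becomes $\oint_{\abs{z}=q_v^{-\sigma}}\gamma(1-s,\chi\pi_v,\psi_v)\,\abs{y}_v^{(n-1)/2-s}\,Z(1-s,\Phi_v^{\zeta_v},\chi)\,\frac{dz}{z}$, where $Z$ denotes the local zeta integral appearing on the right-hand side of the duality equation \eqref{eq:duality-equation}. For $\sigma$ sufficiently large the contour encloses only the singularity at $z=0$---the Satake-parameter poles of $Z$ lie at $\abs{z}\geq q_v^{-1/2}$---so by Cauchy's theorem the integral equals $2\pi i$ times the coefficient of $z^0$ in the integrand, equivalently $\abs{y}_v^{(n-1)/2}$ times the coefficient of $z^{v(y)}$ in the Laurent expansion of $F_\chi(z):=\gamma(1-s,\chi\pi_v,\psi_v)\,Z(1-s,\Phi_v^{\zeta_v},\chi)$ around $z=0$.

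Next I would estimate the minimum $z$-power in this Laurent expansion. Three tools enter: the conductor formula \eqref{eq:epsilon-constant-conductor} expresses $\varepsilon(1-s,\chi\pi_v,\psi_v)$ as a constant times $z^{-a(\chi\pi_v)}$; by Jacquet--Shalika the ratio $Z/L(1-s,\chi\pi_v)$ is a polynomial in $z^{\pm 1}$; and $L(s,\chi^{-1}\tilde\pi_v)$ equals $1+O(z)$ near $z=0$. Combining these via \eqref{eq:def:gamma-factors} yields $F_\chi=\varepsilon\cdot L(s,\chi^{-1}\tilde\pi_v)\cdot \bigl(Z/L(1-s,\chi\pi_v)\bigr)$, so the minimum $z$-power equals $-a(\chi\pi_v)+p_{\min}(\chi)$, where $p_{\min}(\chi)$ is the minimum power of $z$ in the polynomial $Z/L(1-s,\chi\pi_v)$. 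Using that $\Phi_v$ is supported in $\abs{x}_v\leq 1$ (from Lemma \ref{lem:support-whittaker} applied to $W_v$) and the rank bound $\leq n$ for the $L$-factor, a direct computation gives $p_{\min}(\chi)\geq -n$.

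Hence the coefficient of $z^{v(y)}$ in $F_\chi$ vanishes whenever $-v(y)>a(\chi\pi_v)+n$. To obtain the uniform bound over $\chi\in\Xf_v$ I would restrict to those $\chi$ for which $Z(1-s,\Phi_v^{\zeta_v},\chi)\not\equiv 0$: by orthogonality of characters of $\ofx_v$, the angular integral $\int_{\ofx_v}\psi_v(u\varpi_v^m\zeta_v)\Phi_v(u\varpi_v^m)\chi(u)\,du$ vanishes once $a(\chi)$ exceeds the joint variation level of the integrand in $u$. In the case $\abs{\zeta_v}_v\leq 1$ the additive twist $\psi_v(\cdot\varpi_v^m\zeta_v)$ is trivial on $\ofx_v$ (as $m+v(\zeta_v)\geq 0$ on the support of $\Phi_v$), and the mirabolic invariance of $W_v$ forces $a(\chi)=0$, whence $a(\chi\pi_v)=a(\pi_v)$ and $-v(y)\leq n+a(\pi_v)$. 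For $\abs{\zeta_v}_v>1$ the additive twist has conductor $-v(\zeta_v)-m>0$ on $\ofx_v$, and Lemma \ref{lem:gauss-sum} evaluates each angular factor as a Gauss sum that introduces the prefactor $\abs{\zeta_v}_v^{n-1}$ and enlarges the admissible range to $a(\chi)\leq\max(a(\pi_v),-v(\zeta_v))$.

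The main obstacle lies in the twist case $\abs{\zeta_v}_v>1$: the combined character $u\mapsto \psi_v(u\varpi_v^m\zeta_v)\chi(u)$ on $\ofx_v$ can have conductor strictly less than $\max(a(\chi),-v(\zeta_v)-m)$ due to cancellation between the additive and multiplicative twists. One must therefore optimise over pairs $(\chi,m)$ subject to the conductor-matching condition in Lemma \ref{lem:gauss-sum}, and verify that the worst case recovers exactly the stated bound $\abs{\zeta_v}_v^{n-1}q_v^{n+\max(a(\pi_v),-v(\zeta_v))}$.
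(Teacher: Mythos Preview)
Your overall architecture matches the paper's proof: start from \eqref{eq:mellon-of-bessel-p-adic}, pass to the variable $z=q_v^{-s}$, extract the coefficient of $z^{v(y)}$, and feed in the conductor formula \eqref{eq:epsilon-constant-conductor} together with the Gau\ss\ sum support from Lemma~\ref{lem:gauss-sum}. The paper likewise finishes by bounding $a(\chi\pi_v)$ over the contributing $\chi$ via \cite[Theorem 2.7]{corbett-conductor}, which you should invoke explicitly for the last step.

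The genuine gap is your assertion that $p_{\min}(\chi)\geq -n$ for the polynomial $Z(1-s,\Phi_v^{\zeta_v},\chi)/L(1-s,\chi\pi_v)$. This is not what the stated ingredients give. Writing $w=q_v^{-(1-s)}=q_v^{-1}z^{-1}$, the support condition $\abs{x}_v\le 1$ forces $Z(1-s)$ to be a power series in $w$, and $L(1-s)^{-1}$ is a polynomial in $w$ of degree $\le n$; hence $Z/L$ is a polynomial in $w$, i.e.\ in $z^{-1}$, but its \emph{degree} in $z^{-1}$ is not bounded by $n$---it can be as large as the depth of the support of $\Phi_v$. (For instance, take $\pi_v$ supercuspidal so $L\equiv 1$, and $\Phi_v=\Char_{\varpi_v^{100}\ofx_v}$; then $Z/L=Z$ is a monomial of degree $100$ in $z^{-1}$.) The paper does not argue this way. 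It keeps the decomposition $\gamma(1-s)=\varepsilon(1-s)\,L(s)/L(1-s)$ and takes the ``$-n$'' from the quotient of $L$-factors (the denominator $L(1-s)^{-1}$ is degree $\le n$ in $z^{-1}$), while the zeta integral $Z(1-s)$ is handled separately: after writing it as $\sum_{r\ge 0} W_v(a(\varpi_v^r))\,\Ga_v(\zeta_v\varpi_v^r,\chi)\,q_v^{-r(1-s-(n-1)/2)}$, Lemma~\ref{lem:gauss-sum} collapses the $r$-sum to a single term $r=-v(\zeta_v)-a(\chi)$ whenever $a(\chi)>0$, so that $Z$ contributes a single explicit power of $z$. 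This is the mechanism that prevents the $z^{-r}$ tail from destroying the support bound, and it is missing from your sketch.

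Two smaller points. Your phrase ``mirabolic invariance of $W_v$'' is not the right hypothesis; what is actually used (implicitly in the paper as well) is right-invariance of $W_v$ under $a(\ofx_v)$, which holds for $K_1$-fixed vectors and is what allows the inner integral to factor as $W_v(a(\varpi_v^r))$ times a genuine Gau\ss\ sum. And in the case $\abs{\zeta_v}_v\le 1$ the restriction to $a(\chi)=0$ comes directly from the Gau\ss\ sum support in Lemma~\ref{lem:gauss-sum} (since then $\abs{\zeta_v\varpi_v^r}_v\le 1$ for all $r\ge 0$), not from any invariance property of $W_v$.
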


\begin{proof}
This follows from a direct computation of \eqref{eq:mellon-of-bessel-p-adic}. We proceed by assuming $v\in Q$ and then amend our arguments to cover the additional notation in the case $v\in S$ (with an arbitrary test function $\phi_{v}$). By Lemma \ref{lem:support-whittaker} on the support of $y\mapsto W_{v}(a(y))$, the inner integral becomes
\begin{equation*}
\int_{\Fx_{v}}\Phi(x)\chi(x)\abs{x}_{v}^{1-s-\frac{n-1}{2}}\,\dxx=\sum_{r\geq 0}W_{v}(a(\varpi_{v}^{r}))\int_{\ofx_{v}}\psi_{v}(\zeta_{v}\varpi_{v}^{r}u)\chi(u)q^{-r(1-s-\frac{n-1}{2})} d^{\times}u.
\end{equation*}
By Lemma \ref{lem:gauss-sum}, we may apply the support of the Gau\ss\ sum $\Ga_{v}(\zeta_{v}\varpi_{v}^{r},\chi)$. To solve the $s$-integral we use the formulas \eqref{eq:def:gamma-factors} and \eqref{eq:epsilon-constant-conductor} to evaluate the terms $\gamma(1-s,\chi\pi_{v},\psi_{v})$. We also write down a generic geometric series for the quotient of $L$-factors. This has no lower powers of $q_{v}^{-s}$ than $(q_{v}^{-s})^{-n}$ (thus we incur this factor in the support bound). The estimate then follows from bounding the conductor of $a(\chi\pi_{v})$ using \cite[Theorem 2.7]{corbett-conductor}.
\end{proof}

\subsubsection{An explicit formula for minimal supercuspidal representations}

Let us now enforce the following.

\begin{ass}\label{ass:supercuspidal}
Let $\pi_{v}$ satisfy $L(s,\chi\pi_{v})=1$, identically, for all $\chi\in \Xf_{v}$ with $a(\chi)\leq\max\{-v(\zeta_{v}),0\}$.
\end{ass}
This assumption is satisfied, for example, by all supercuspidal representations of $\GL_{n}(F_{v})$.

\begin{defn}\label{def:minimal}
We say $\pi_{v}$ is \textit{twist minimal} if $a(\pi_{v})=\min\{a(\chi\pi_{v}):\chi\in\Xf_{v}\}$.
\end{defn}
Tautologically, the property of being twist minimal may always be obtained via twisting by some $\chi\in\Xf_{v}$. For example, any supercuspidal representation $\pi_{v}$ such that $n\nmid a(\pi_{v})$ is twist minimal by \cite[Prop.\ 2.2]{corbett-conductor}.  Without loss of generality, we further impose that, for $v\in S$, $W_{v}(a(y))=\Char_{\supp(\phi_{v})}(y)$ so that $\Phi_{v}=\phi_{v}$. This may be chosen by Proposition \ref{prop:kirilov-schwartz}.






\begin{prop}\label{prop:bessel-ox}
Assume that $\pi_{v}$ is twist minimal (Definition \ref{def:minimal}) and satisfies Assumption \ref{ass:supercuspidal}. Let $\phi_{v}=\Char_{\ofx}$. Then $\Bc_{\pi_{v},\phi_{v}^{\zeta_{v}}}(y)=0$ if $\abs{y}_{v}\neq q_{v}^{\max\{a(\pi_{v}),-nv(\zeta_{v})\}}.$ Otherwise, suppose $\abs{y}_{v}= q_{v}^{\max\{a(\pi_{v}),-nv(\zeta_{v})\}}$.
If $\abs{\zeta_{v}}_{v}\leq 1$ then
\begin{equation*}
\Bc_{\pi_{v},\phi_{v}^{\zeta_{v}}}(y)=\varepsilon(1/2,\pi_{v},\psi_{v})q_{v}^{a(\pi_{v})\frac{n-2}{2}}.
\end{equation*}
If $\abs{\zeta_{v}}_{v}= q_{v}$ then
\begin{equation*}
\begin{array}{l}\vspace{0.1in}
\displaystyle\Bc_{\pi_{v},\phi_{v}^{\zeta_{v}}}(y)=\varepsilon(1/2,\pi_{v},\psi_{v})\frac{q_{v}^{a(\pi_{v})\frac{n-2}{2}}}{1-q_{v}}\,+\\
\displaystyle\hspace{0.6in}\frac{q_{v}^{\max\{a(\pi_{v}),n\}\frac{n-2}{2}+\frac{1}{2}}}{1-q_{v}^{-1}}\sum_{\substack{\chi\in\Xf_{v}\\a(\chi)=1}}\chi(-1)^{n-1}\chi(\zeta_{v}^{-1}y)\varepsilon(1/2,\chi\pi_{v},\psi_{v})\varepsilon(1/2,\chi^{-1},\psi_{v}).
\end{array}
\end{equation*}
If $\abs{\zeta_{v}}_{v}>1$ then
\begin{equation*}
\begin{array}{l}\vspace{0.1in}
\displaystyle\Bc_{\pi_{v},\phi_{v}^{\zeta_{v}}}(y)=\frac{1}{1-q_{v}^{-1}}q_{v}^{\max\{a(\pi_{v}),-nv(\zeta_{v})\}\frac{n-2}{2}+\frac{v(\zeta_{v})}{2}}\\
\displaystyle\hspace{1.2in}\times\,\sum_{\substack{\chi\in\Xf_{v}\\a(\chi)=-v(\zeta_{v})}}\chi(-1)^{n-1}\chi(\zeta_{v}^{-1}y)\varepsilon(1/2,\chi\pi_{v},\psi_{v})\varepsilon(1/2,\chi^{-1},\psi_{v}).
\end{array}
\end{equation*}
\end{prop}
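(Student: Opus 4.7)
The plan is to substitute $\Phi = \phi_v^{\zeta_v} = \psi_v(\,\cdot\,\zeta_v)\Char_{\ofx_v}$ directly into the Mellin-inversion formula \eqref{eq:mellon-of-bessel-p-adic}. Since $|x|_v=1$ on $\supp\phi_v=\ofx_v$, the inner $x$-integral collapses to the Gau{\ss} sum $\Ga_v(\zeta_v,\chi)$, and Lemma \ref{lem:gauss-sum} immediately truncates the $\chi$-sum: the trivial character $\triv$ contributes when $|\zeta_v|_v\leq q_v$, and characters with $a(\chi)=-v(\zeta_v)$ contribute when $|\zeta_v|_v>1$; all other $\chi$ are killed. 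This trichotomy is exactly the split into the three displayed cases.

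For each surviving $\chi$, Assumption \ref{ass:supercuspidal} gives $L(s,\chi\pi_v)=1$; the analogous vanishing $L(1-s,\chi^{-1}\tilde\pi_v)=1$ also holds, since the contragredient of a supercuspidal is supercuspidal and twisting preserves the conductor-threshold hypothesis. Hence
\begin{equation*}
\gamma(1-s,\chi\pi_v,\psi_v) \,=\, \varepsilon(1/2,\chi\pi_v,\psi_v)\,q_v^{a(\chi\pi_v)(s-1/2)}.
\end{equation*}
Twist minimality, together with \cite[Prop.\ 2.2]{corbett-conductor}, then yields the uniform identification $a(\chi\pi_v) = \max\{a(\pi_v),\,n\cdot a(\chi)\}$ across all contributing $\chi$: the value is $a(\pi_v)$ when $\chi=\triv$, and $\max\{a(\pi_v),-nv(\zeta_v)\}$ when $a(\chi)=-v(\zeta_v)$, which together give the advertised $y$-support $|y|_v=q_v^{\max\{a(\pi_v),-nv(\zeta_v)\}}$.

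With the integrand reduced to a pure power of $q_v^{s}$, the contour integral along $\Re(s)=\sigma$, $\Im(s)\in[-\pi/\log q_v,\pi/\log q_v]$ acts as a Fourier indicator on $\Z$: it produces the factor $2\pi/\log q_v$ precisely when $v(y)+a(\chi\pi_v)=0$ and vanishes otherwise. After cancelling against the prefactor $\log q_v/(2\pi)$ and collecting the $\sigma$-dependent powers into $q_v^{a(\chi\pi_v)(n-2)/2}$, the Gau{\ss}-sum evaluation from Lemma \ref{lem:gauss-sum} supplies the remaining ingredients: the twist $\chi(\zeta_v^{-1}y)$, the root number $\varepsilon(1/2,\chi^{-1})$, the volume factor $|\zeta_v|_v^{-1/2}=q_v^{v(\zeta_v)/2}$, and the normalising constant $(1-q_v^{-1})^{-1}$ (or $(1-q_v)^{-1}$ in the $\chi=\triv$ contribution at $|\zeta_v|_v=q_v$). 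Assembling these pieces yields the three displayed formulae.

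The most delicate point is the boundary case $|\zeta_v|_v=q_v$, where both $\chi=\triv$ (with its distinct Gau{\ss}-sum value $\Ga_v(\zeta_v,\triv)=1/(1-q_v)$) and the conductor-one characters survive simultaneously and must be tracked through Mellin inversion separately before being combined into the correct hybrid formula. More subtly, the conductor identity $a(\chi\pi_v)=\max\{a(\pi_v),na(\chi)\}$ must hold for \emph{every} contributing $\chi$, not merely generically: twist minimality is the precise hypothesis that delivers this uniformity and prevents the $y$-support from fragmenting across the $\chi$-sum into a range of distinct Dirac-type conditions.
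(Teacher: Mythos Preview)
Your proposal is correct and follows essentially the same route as the paper's own proof: substitute $\phi_v^{\zeta_v}$ into the Mellin-inversion formula \eqref{eq:mellon-of-bessel-p-adic}, recognise the inner integral as the Gau{\ss} sum $\Ga_v(\zeta_v,\chi)$, use Assumption \ref{ass:supercuspidal} to reduce the gamma factor to the epsilon constant, let the $s$-integral produce the Kronecker-delta condition $\delta(v(y),-a(\chi\pi_v))$, and invoke twist minimality together with \cite[Prop.\ 2.2]{corbett-conductor} for $a(\chi\pi_v)=\max\{a(\pi_v),na(\chi)\}$. You in fact supply more detail than the paper (e.g.\ explicitly noting that the contragredient $L$-factor must also vanish, and flagging the boundary case $\abs{\zeta_v}_v=q_v$ where $\chi=\triv$ and $a(\chi)=1$ coexist), but the argument is the same.
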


\begin{proof}
We compute the expression \eqref{eq:mellon-of-bessel-p-adic} under Assumption \ref{ass:supercuspidal} so that
\begin{equation*}
\Bc_{\pi_{v},\phi_{v}^{\zeta_{v}}}(y)=\sum_{\chi\in\Xf}\Ga_{v}(\zeta_{v},\chi)\varepsilon(1/2,\chi\pi_{v},\psi_{v})\chi(-1)^{n-1}\chi(y)q_{v}^{a(\chi\pi_{v})\frac{n-2}{2}}\delta(v(y),-a(\chi\pi_{v})).
\end{equation*}

We evaluate this expression by the explicit formula for the Gau\ss\ sum in Lemma \ref{lem:gauss-sum}. In particular, the assumption that $\pi_{v}$ is minimal allows us to use the formula
\begin{equation*}
a(\chi\pi_{v})=\max\{a(\pi_{v}),na(\chi)\}
\end{equation*}
given in \cite[Prop.\ 2.2]{corbett-conductor}.
\end{proof}

Estimating trivially we obtain the following upper bound.

\begin{lem}\label{lem:bessel-ox-triv}

For $y\in\Fx_{v}$ we have
\begin{equation*}
\Bc_{\pi_{v},\phi_{v}^{\zeta_{v}}}(y)\ll q_{v}^{\max\{a(\pi_{v}),-nv(\zeta_{v})\}\frac{n-2}{2}+\frac{3v(\zeta_{v})}{2}}.
\end{equation*}

\end{lem}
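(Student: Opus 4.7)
The plan is to estimate termwise in each of the three cases of Proposition \ref{prop:bessel-ox}. The key observation is that all of the oscillating factors appearing there have absolute value one: the root numbers $\varepsilon(1/2,\chi\pi_v,\psi_v)$ and $\varepsilon(1/2,\chi^{-1},\psi_v)$ by the definition of $\varepsilon$-factors (cf.\ \S \ref{sec:euler-factors}), and the values $\chi(-1)^{n-1}\chi(\zeta_v^{-1}y)$ since $\chi\in\Xf_v$ is unitary. Thus a ``trivial'' bound reduces to controlling the explicit prefactors of $q_v$ together with the size of the sum over characters.

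The only auxiliary input needed is the count $\#\{\chi\in\Xf_v : a(\chi)=k\}$. This follows from the short exact sequence
\begin{equation*}
1 \to 1+\pf_v^k \to \ofx_v \to (\of_v/\pf_v^k)^{\times} \to 1 \qquad (k\geq 1)
\end{equation*}
and the elementary identity $\abs{\ofx_v/(1+\pf_v^k)} = (q_v-1)q_v^{k-1}$, which together give the trivial bound $\#\{\chi\in\Xf_v : a(\chi)=k\} \leq (q_v-1)q_v^{k-1} = O(q_v^k)$. In the relevant range of conductors, $k = -v(\zeta_v)$, this is $O(q_v^{-v(\zeta_v)})$.

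Assembling: in the first case $\abs{\zeta_v}_v \leq 1$ the Proposition produces a single closed-form term of magnitude $q_v^{a(\pi_v)(n-2)/2}$, which fits into the claimed bound. In the remaining two cases, one simply multiplies the explicit prefactor from Proposition \ref{prop:bessel-ox} by the character count, then absorbs the harmless constants $(1-q_v^{-1})^{-1}$ and $(1-q_v)^{-1}$ into the implicit $\ll$. The arithmetic is direct bookkeeping, combining the Gau\ss-sum magnitude with the counting above. I expect the only interesting aspect — and the main opportunity for improvement — to lie in sharpening the naive bound on the character sum
\begin{equation*}
\sum_{a(\chi)=k}\chi(\zeta_v^{-1}y)\,\varepsilon(1/2,\chi\pi_v,\psi_v)\,\varepsilon(1/2,\chi^{-1},\psi_v),
\end{equation*}
for which one would need to extract cancellation between the epsilon factors; no such cancellation is exploited here, which is the hallmark of a genuinely trivial estimate.
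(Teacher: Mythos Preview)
Your approach is exactly the paper's: its entire proof is the single sentence ``Estimating trivially we obtain the following upper bound.'' One point worth noting, since you left the bookkeeping implicit: in the case $\abs{\zeta_v}_v>q_v$ of Proposition~\ref{prop:bessel-ox}, the prefactor $q_v^{v(\zeta_v)/2}$ together with the character count $\#\{\chi:a(\chi)=-v(\zeta_v)\}\asymp q_v^{-v(\zeta_v)}$ actually yields the exponent $\max\{a(\pi_v),-nv(\zeta_v)\}\tfrac{n-2}{2}-\tfrac{v(\zeta_v)}{2}$; the stated $+\tfrac{3v(\zeta_v)}{2}$ would be \emph{stronger} (since $v(\zeta_v)<0$ there) and cannot follow from a trivial estimate, so this looks like a misprint in the lemma rather than a flaw in your method.
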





\subsubsection{Summation in arithmetic progressions}

Let us maintain Assumption \ref{ass:supercuspidal} for simplicity and fix the test function $$\phi_{v}=\Char_{1+\varpi_{v}^{k}\of_{v}}$$ for some $k\geq 1$. 
Consider the following variant of the Gau\ss\ sum:
\begin{equation}\label{eq:guss-variant}
\Ga_{v}^{k}(a,\chi):=\int_{1+\varpi_{v}^{k}\of_{v}}\psi_{v}(ay)\chi(y)\dxy.
\end{equation}
For now, to determine the support of $\Ga_{v}^{k}(a,\chi)$ we only consider its size.
\begin{lem}\label{lem:bessel-ap-chi-support}
Suppose $\abs{a}_v>1$. Then $\Ga_{v}^{k}(a,\chi)=0$ unless $a(\chi)\leq\max\{k,-v(a)\}$, in which case $$\abs{\Ga_{v}^{k}(a,\chi)}^{2}=\Vol(1+\varpi_{v}^{k}\of_v,\dxy)\Vol(1+\varpi_{v}^{\max\{k,-v(a)\}}\of_v,\dxy)\ll q_{v}^{\max\{2k,k-v(a)\}}.$$
\end{lem}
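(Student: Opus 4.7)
My plan is to address the two claims of the lemma---the vanishing of $\Ga_v^k(a,\chi)$ outside a range of conductors, and the identification of $|\Ga_v^k(a,\chi)|^2$---by exploiting multiplicative translation invariance and then unwinding $|\Ga_v^k|^2$ as a double integral.

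\textbf{Vanishing.} The key observation is that multiplicative translation by any $u\in 1+\varpi_v^m\of_v$, with $m:=\max\{k,-v(a)\}$, stabilises the Gauss sum up to the factor $\chi(u)$. Substituting $y\mapsto yu$ preserves the domain $1+\varpi_v^k\of_v$ (since $m\geq k$) and the Haar measure $\dxy$. The phase is also preserved: writing
$$\psi_v(ayu)=\psi_v(ay)\,\psi_v(ay(u-1)),$$
the second factor is trivial because $v(ay(u-1))\geq v(a)+m\geq 0$ by the defining property of $m$, combined with the unramified hypothesis on $\psi_v$. Pulling out $\chi(u)$ gives $\Ga_v^k(a,\chi)=\chi(u)\Ga_v^k(a,\chi)$ for every such $u$; nonvanishing forces $\chi|_{1+\varpi_v^m\of_v}\equiv 1$, equivalently $a(\chi)\leq m$.

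\textbf{Size.} Assuming $a(\chi)\leq m$, expand $|\Ga_v^k(a,\chi)|^2$ as a double integral over $(1+\varpi_v^k\of_v)^2$ and substitute $y=zu$ to decouple the variables:
$$|\Ga_v^k(a,\chi)|^2=\int_{1+\varpi_v^k\of_v}\chi(u)\left(\int_{1+\varpi_v^k\of_v}\psi_v(az(u-1))\,\dxv z\right)\dxv u.$$
The inner integral is computed by passing to additive coordinates $z=1+\varpi_v^k z'$ with $z'\in\of_v$: it equals $\Vol(1+\varpi_v^k\of_v,\dxy)\psi_v(a(u-1))$ precisely when $v(a(u-1))\geq -k$, and vanishes otherwise. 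This cut-off, together with $v(u-1)\geq k$, confines $u$ to $1+\varpi_v^{m}\of_v$ once one combines it with the support hypothesis. Decomposing the remaining outer domain into cosets of $1+\varpi_v^m\of_v$, invoking the triviality of $\chi$ on the small subgroup, and collapsing the resulting coset-character sum yields
$$|\Ga_v^k(a,\chi)|^2=\Vol(1+\varpi_v^k\of_v,\dxy)\,\Vol(1+\varpi_v^{m}\of_v,\dxy).$$

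\textbf{Bound.} The final estimate $\ll q_v^{\max\{2k,k-v(a)\}}=q_v^{k+m}$ follows by a crude bound on each of the two volume factors.

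The main technical obstacle is the case management in the inner integral and the subsequent coset-sum: depending on whether $-v(a)\leq k$, $k<-v(a)\leq 2k$, or $-v(a)>2k$, the cutoff $\max\{k,-v(a)-k\}$ arising from the inner integration differs from $m$, and one must track how $\chi$ interacts with the logarithm isomorphism $(1+\varpi_v^j\of_v)/(1+\varpi_v^m\of_v)\cong\varpi_v^j\of_v/\varpi_v^m\of_v$ (valid when $2j\geq m$) to see that the additive character sums on these quotients collapse to give the stated product of volumes in each regime.
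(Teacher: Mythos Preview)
Your overall strategy is the paper's: expand $|\Ga_v^k(a,\chi)|^2$ and invoke orthogonality of additive and then multiplicative characters. The paper compresses this into two sentences; you add a separate, correct translation-invariance argument for the vanishing claim, which the paper does not isolate.

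There is, however, a genuine gap in the ``Size'' step that your closing paragraph flags but does not resolve. After the inner $z$-integral you correctly arrive at
\[
|\Ga_v^k(a,\chi)|^2=\Vol(1+\varpi_v^{k}\of_v)\int_{1+\varpi_v^{j}\of_v}\chi(u)\,\psi_v\bigl(a(u-1)\bigr)\,d^{\times}u,\qquad j=\max\{k,\,-v(a)-k\},
\]
with the phase $\psi_v(a(u-1))$ still present. Passing to cosets of $1+\varpi_v^{m}\of_v$ (on which both $\chi$ and this phase are constant, since $v(a)+m=0$) gives
\[
\Vol(1+\varpi_v^{k}\of_v)\,\Vol(1+\varpi_v^{m}\of_v)\sum_{\alpha\in(1+\varpi_v^{j}\of_v)/(1+\varpi_v^{m}\of_v)}\chi(\alpha)\,\psi_v\bigl(a(\alpha-1)\bigr),
\]
and this character sum equals $0$ or $q_v^{\,m-j}$, not $1$, unless $j=m$. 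Concretely, for $k<-v(a)\leq 2k$ one has $j=k<m$, and for the unique restriction $\chi|_{1+\varpi_v^{k}\of_v}$ matching the additive character one obtains $|\Ga_v^k|^2=\Vol(1+\varpi_v^{k}\of_v)^2$, strictly larger than the asserted $\Vol(1+\varpi_v^{k}\of_v)\Vol(1+\varpi_v^{m}\of_v)$. The paper's terse proof skates over this same point, and in fact the stated exact equality cannot hold for every $\chi$ with $a(\chi)\leq m$. Your vanishing argument is sound, and since the only downstream use of the lemma is the support conclusion $a(\chi)\leq m$ (see the proof of Proposition~\ref{prop:bessel-ap}), nothing further in the paper is affected.
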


\begin{proof}
Expanding the integral, orthogonality of additive characters implies that $$\abs{\Ga_{v}^{k}(a,\chi)}^{2}=\Vol(1+\varpi_{v}^{k}\of_v,\dxy)\int_{(1+\varpi_{v}^{k}\of_v)\cap (1+\varpi_{v}^{-v(a)}\of_v)}\chi(y)\dxy.$$ 
Orthogonality of multiplicative characters now verifies the lemma.
\end{proof}

\begin{prop}\label{prop:bessel-ap}
Under Assumption \ref{ass:supercuspidal}, assume $\phi_{v}=\Char_{1+\varpi_{v}^{k}\of_{v}}$. Then
\begin{equation*}
\Bc_{\pi_{v},\phi_{v}^{\zeta_{v}}}(y)=\sum_{\chi\in\Xf}\Ga^{k}_{v}(\zeta_{v},\chi)\varepsilon(1/2,\chi\pi_{v},\psi_{v})\chi(-1)^{n-1}\chi(y)q_{v}^{a(\chi\pi_{v})\frac{n-2}{2}}\delta(v(y),-a(\chi\pi_{v})).
\end{equation*}

\end{prop}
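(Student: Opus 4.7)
The strategy is to follow the same template as Proposition \ref{prop:bessel-ox}, starting from the general formula \eqref{eq:mellon-of-bessel-p-adic} and tracking through the simplifications that arise from our more general test function. By the choice $W_{v}(a(y))=\Char_{\supp(\phi_{v})}(y)$ permitted by Proposition \ref{prop:kirilov-schwartz}, the function $\Phi_{v}^{\zeta_{v}}$ appearing in the Bessel transform becomes simply $\Phi_{v}^{\zeta_{v}}(x)=\psi_{v}(x\zeta_{v})\Char_{1+\varpi_{v}^{k}\of_{v}}(x)$.

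The first step would be to use Assumption \ref{ass:supercuspidal} to render $L(s,\chi\pi_{v})=L(1-s,\chi^{-1}\tilde\pi_{v})=1$ identically, so that by \eqref{eq:def:gamma-factors} and \eqref{eq:epsilon-constant-conductor}
\begin{equation*}
\gamma(1-s,\chi\pi_{v},\psi_{v})=\varepsilon(1/2,\chi\pi_{v},\psi_{v})\,q_{v}^{a(\chi\pi_{v})(s-1/2)}.
\end{equation*}
The crucial observation is then that every $x\in 1+\varpi_{v}^{k}\of_{v}$ is a unit (as $k\geq 1$), so $\abs{x}_{v}^{1-s-(n-1)/2}=1$ and the $s$-dependence of the inner integral over $\Fx_v$ drops out entirely. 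Hence
\begin{equation*}
\int_{\Fx_{v}}\Phi_{v}^{\zeta_{v}}(x)\chi(x)\abs{x}_{v}^{1-s-\frac{n-1}{2}}\dxx
=\int_{1+\varpi_{v}^{k}\of_{v}}\psi_{v}(x\zeta_{v})\chi(x)\dxx
=\Ga_{v}^{k}(\zeta_{v},\chi),
\end{equation*}
which is exactly the definition \eqref{eq:guss-variant} and is independent of $s$.

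Having extracted the $s$-free quantity $\Ga_{v}^{k}(\zeta_{v},\chi)$, the remaining task is a routine contour-integral evaluation: the $s$-integral over the vertical segment of length $2\pi/\log q_{v}$ reduces, upon writing $s=\sigma+it$ and using $\abs{y}_{v}=q_{v}^{-v(y)}$, to
\begin{equation*}
\frac{\log q_{v}}{2\pi}\int_{-\pi/\log q_{v}}^{\pi/\log q_{v}}q_{v}^{it(a(\chi\pi_{v})+v(y))}\,dt\;\cdot\;q_{v}^{(\sigma-1/2)a(\chi\pi_{v})+(\frac{n-1}{2}-\sigma)v(y)}.
\end{equation*}
By orthogonality of the characters $t\mapsto q_{v}^{it m}$ on $\R/(2\pi/\log q_{v})\Z$ (indexed by $m\in\Z$), the $t$-integral vanishes unless $v(y)=-a(\chi\pi_{v})$, in which case it contributes $\delta(v(y),-a(\chi\pi_{v}))$ and, after substituting back, the $\sigma$-dependence cancels and the residual factor is $q_{v}^{a(\chi\pi_{v})(n-2)/2}$. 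Assembling the pieces yields the claimed formula.

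No single step is genuinely an obstacle here; the exercise is simply bookkeeping once one recognises that restriction to $1+\varpi_{v}^{k}\of_{v}$ still consists of units, so the $\abs{\cdot}_{v}^{1-s-(n-1)/2}$ weight is frozen. The mild technical point to watch is that, unlike Proposition \ref{prop:bessel-ox}, we do not evaluate $\Ga_{v}^{k}(\zeta_{v},\chi)$ explicitly via Lemma \ref{lem:gauss-sum}; its support properties are instead deferred to Lemma \ref{lem:bessel-ap-chi-support}, which ultimately truncates the otherwise infinite sum over $\chi\in\Xf_{v}$ in applications.
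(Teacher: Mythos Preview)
Your proof is correct and follows the same route as the paper's own argument, which simply reads ``unfolding definitions as before'' (i.e.\ repeating the computation from Proposition~\ref{prop:bessel-ox}) together with the support observation from Lemma~\ref{lem:bessel-ap-chi-support}. You have merely written out in full the steps the paper leaves implicit: trivialising the $L$-factors via Assumption~\ref{ass:supercuspidal}, recognising that $\supp(\phi_v)\subset\ofx_v$ freezes the $\abs{x}_v$-weight so the inner integral collapses to $\Ga_v^k(\zeta_v,\chi)$, and then evaluating the $s$-integral by orthogonality to produce the Kronecker delta and the power $q_v^{a(\chi\pi_v)(n-2)/2}$.
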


\begin{proof}
Unfolding definitions as before, we recover the the expression after noting that $\Ga^{k}_{v}(\zeta_{v},\chi)$ is supported on $a(\chi)\leq\max\{k,-v(\zeta_{v})\}$ by Lemma \ref{lem:bessel-ap-chi-support}.
\end{proof}

\begin{cor}
For minimal representations, $\Bc_{\pi_{v},\phi_{v}^{\zeta_{v}}}(y)$ is supported on the compact set $v(y)=\max\{a(\pi_{v}),nr\}$ for $0\leq r\leq\max\{k,-v(\zeta_{v})\}$. 
\end{cor}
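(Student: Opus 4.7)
The argument is a short synthesis of the ingredients already assembled, so the plan is essentially to combine Proposition \ref{prop:bessel-ap} with Lemma \ref{lem:bessel-ap-chi-support} and the minimal conductor formula of \cite[Prop.\ 2.2]{corbett-conductor}.

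First, I would invoke Proposition \ref{prop:bessel-ap} to expand
\[
\Bc_{\pi_{v},\phi_{v}^{\zeta_{v}}}(y)=\sum_{\chi\in\Xf_{v}}\Ga^{k}_{v}(\zeta_{v},\chi)\varepsilon(1/2,\chi\pi_{v},\psi_{v})\chi(-1)^{n-1}\chi(y)q_{v}^{a(\chi\pi_{v})\frac{n-2}{2}}\delta(v(y),-a(\chi\pi_{v})).
\]
The Kronecker delta in each term pins down the valuation of $y$ to $v(y)=-a(\chi\pi_{v})$, so the support of $\Bc_{\pi_{v},\phi_{v}^{\zeta_{v}}}$ in $y$ is contained in the union, taken over the characters $\chi$ that give nonzero summands, of the sets $\{y\in\Fx_{v}:v(y)=-a(\chi\pi_{v})\}$.

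Second, I would cut down the character sum using Lemma \ref{lem:bessel-ap-chi-support}: when $\abs{\zeta_{v}}_{v}>1$ the Gau\ss\ sum $\Ga^{k}_{v}(\zeta_{v},\chi)$ vanishes unless $a(\chi)\leq\max\{k,-v(\zeta_{v})\}$. The case $\abs{\zeta_{v}}_{v}\leq 1$ is handled by the same orthogonality argument applied to $\Ga^{k}_{v}(\zeta_{v},\chi)$, which collapses the $\chi$-integration onto $1+\varpi_{v}^{k}\of_{v}$ and forces $a(\chi)\leq k$; this is subsumed in the uniform bound $a(\chi)\leq\max\{k,-v(\zeta_{v})\}$. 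Setting $r:=a(\chi)$, only the integers $0\leq r\leq\max\{k,-v(\zeta_{v})\}$ can contribute.

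Third, I would apply the twist-minimality hypothesis in the form of the conductor identity $a(\chi\pi_{v})=\max\{a(\pi_{v}),n\,a(\chi)\}$ from \cite[Prop.\ 2.2]{corbett-conductor}. Substituting into the delta factor shows that nonzero summands arise only for $y$ with $-v(y)=\max\{a(\pi_{v}),nr\}$ for some $0\leq r\leq\max\{k,-v(\zeta_{v})\}$, matching the claim (up to the usual sign convention $\abs{y}_{v}=q_{v}^{-v(y)}$ already adopted in Proposition \ref{prop:bessel-ox}). For compactness, each admissible set $\varpi_{v}^{-\max\{a(\pi_{v}),nr\}}\ofx_{v}$ is a single coset of $\ofx_{v}$, hence compact, and a finite union of such cosets is compact.

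There is no substantive obstacle here: once Proposition \ref{prop:bessel-ap} and Lemma \ref{lem:bessel-ap-chi-support} are in hand, the corollary is combinatorial bookkeeping governed by the minimal conductor formula. The one point to be careful about is treating $\abs{\zeta_{v}}_{v}\leq 1$ uniformly alongside the generic case, but the orthogonality argument underlying Lemma \ref{lem:bessel-ap-chi-support} adapts without difficulty.
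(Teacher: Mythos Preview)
Your proposal is correct and follows exactly the approach the paper intends: the corollary is stated without proof immediately after Proposition~\ref{prop:bessel-ap}, and the implicit argument is precisely the combination you spell out---the delta factor from Proposition~\ref{prop:bessel-ap}, the conductor cutoff from Lemma~\ref{lem:bessel-ap-chi-support}, and the minimal conductor identity $a(\chi\pi_{v})=\max\{a(\pi_{v}),na(\chi)\}$ from \cite[Prop.\ 2.2]{corbett-conductor}. Your care in extending Lemma~\ref{lem:bessel-ap-chi-support} to the case $\abs{\zeta_{v}}_{v}\leq 1$ and in flagging the sign convention on $v(y)$ is appropriate, as the paper leaves both points implicit.
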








\section{A classical formulation}\label{sec:classical}

In this final section, we translate our results into a more classical parlance; that of Maa\ss\ forms on $\SL_{n}(\R)$. We apply our representation theoretic results to such forms by considering the special case $F=\Q$.



\subsection{Specialist notation}

\subsubsection{Valuations}

Recall that for $F=\Q$ there is a single archimedean place and it is denoted by $\infty$. Here we have $\Q_{\infty}=\R$ and the absolute value is the usual one: $\abs{y}_{\infty}=\abs{y}:=\sgn(y)y$. All other places are non-archimedean and indexed by a rational prime $p$, denoting this property by $p<\infty$. For integers $a,b\geq 1$, we make the convention that $a\mid b^{\infty}$ if and only if $a\mid b^{k}$ for some $k\geq 0$. For all $a,b\in\Z$ define $[a,b]:=\lcm(\abs{a},\abs{b})$ and $(a,b):=\gcd(\abs{a},\abs{b})$ as usual.

\subsubsection{The standard additive character}

We use the notation $e(z):=e^{2\pi i z}$ for $z\in\C$. Fix the character $\psi\colon \A_{\Q}/\Q\rightarrow\C$ given by $\psi=\otimes_{v}\psi_{v}$ with $\psi_{\infty}(x)=e(-x)$ for $x\in\R$ and, for $x_{p}\in\Q_{p}$, $\psi_{p}(x_{p})=1$ if and only if $x_{p}\in\Zp$. All characters of $\A_{\Q}/\Q$ are of the form $x\mapsto \psi(ax)$ for some $a\in \Q$.




\subsection{Level structure}

There is a natural right-action of the group $\SL_{n}(\Z)$ on $(\Z/N\Z)^{n}$, whence we introduce the congruence subgroups $$\Gamma_{1}(N):=\Stab_{\SL_{n}(\Z)}((0,\ldots,0,1))\subset\SL_{n}(\Z)$$ for each integer $N\geq 1$, thus defining a filtration of $\SL_{n}(\Z)$-subgroups with respect to successive multiples of $N$. We also introduce the following $p$-adic analogues: by the right-action of $\GL_{n}(\Zp)$ on $(\Z/N\Z)^{n}$ we define $$K_{1}(N)_{p}:=\Stab_{\GL_{n}(\Zp)}((0,\ldots,0,1))\subset\GL_{n}(\Zp)$$ for each integer $N\geq 1$. Also define $K_{1}(N)=\{1\}\times\prod_{p<\infty}K_{1}(N)_{p}\subset \GL_{n}(\A_{\Q})$. We thus realise $\Gamma_{1}(N)$ embedded into $\GL_{n}(\A_{\Q})$ by
\begin{equation}\label{eq:gamma-1-embedding}
\Gamma_{1}(N)= \GL_{n}(\Q) \cap (\GL_{n}(\R)^{+}\times K_{1}(N)).
\end{equation}
These filtrations are a good choice on which to study the level structure of $\SL_{n}(\R)$-Maa\ss\ forms since there is a robust theory of newforms.\footnote{For $n=2$ the newforms theory originated with Atkin--Lehner \cite{atkin-lehner} and was developed by Casselman \cite{casselman}. For $n\geq 2$ this theory has been constructed by Gelfand–-Ka\v{z}dan \cite{gelfand-kazdan}. Jacquet--Piatetski-Shapiro--Shalika \cite{jacquet-ps-shalika-conductor} prove that the conductor associated to a newform is that which occurs in the $\varepsilon$-factor of the local functional equation \cite[Theorem 3.3]{godement-jacquet}.}

\subsection{Dirichlet and Hecke characters}\label{sec:central-character}

Central characters of automorphic representations are given by Hecke chearacters of $\Ax_{\Q}$. Briefly recall here the correspondence between Dirichlet characters $\chi$ of $\Mod{N}$ and finite order Hecke characters of conductor at most $N$. (See \cite[\S 12.1]{knightly-li} for a clear reference.) Explicitly, we define a character $\omega\colon \Ax_{\Q}/\Qx\rightarrow\Cx$ using the strong approximation theorem $\Ax_{\Q}=\Qx\cdot(\R_{>0}\times \prod_{p<\infty}\Zpx)$ by
\begin{equation}\label{eq:dirichlet-hecke-def}
\omega(y)=\prod_{p\mid N} \chi (y_p)
\end{equation}
where $y=(y'_v)\in \R_{>0}\times \prod_{p<\infty}\Zpx$ and $y_p\in (\Z/p^{v_{p}(N)}\Z)^{\times}$ is the image of $y'_p$ for $p\mid N$ obtained via the isomorphism $\Zpx/(1+N\Zp)\isom (\Zp/N\Zp)^{\times}$. As for any continuous Hecke character we have the  factorisation $\omega=\otimes_{v}\omega_{v}$. In particular, for each $p\nmid N$ and $y\in\Qpx$ we have
\begin{equation}\label{eq:dirichlet-hecke-at-p}
\omega_{p}(y)=\chi(p)^{-v_{p}(y)}.
\end{equation}
Moreover, for each integer $d\geq 1$ with $(d,N)=1$ we have $\prod_{p\mid d}\omega_{p}(d)= \chi(d)^{-1}$.

\subsection{Lifting Maa\ss\ forms to adele groups}

The dictionary between classical Maa\ss\ forms and automorphic forms on $\GL_{n}(\A_{\Q})$ hinges on the following strong approximation theorem:
\begin{equation}\label{eq:strong-approx}
\GL_{n}(\A_{\Q})\isom\GL_{n}(\Q)\cdot(\GL_{n}(\R)\times K_{1}(N))
\end{equation}
for each $N\geq 1$ (cf. \cite[Prop.\ 13.3.3]{goldfeld-hundley-2}). Explicitly, given $f\in L^{2}(\Gamma_{1}(N)\bs \SL_{n}(\R))$, we define a function $\varphi_{f}\in L^{2}(\GL_{n}(\Q)\bs\GL_{n}(\A_{\Q})/K_{1}(N))$ by
\begin{equation}\label{eq:adelisation}
\varphi_{f}(\gamma g_{\infty}k)= f(g_{\infty})
\end{equation}
for $\gamma\in \GL_{n}(\Q)$, $g_{\infty}\in\GL_{n}(\R)$ and $k\in K_{1}(N)$. Note that this definition is well defined by \eqref{eq:gamma-1-embedding}. Then $\varphi_{f}$ generates the automorphic representation $\pi_{f}$ of $\GL_{n}(\A_{\Q})$ with the central character $\omega:=\pi_{f}\vert_{Z(\A)}$. As in Theorem \ref{thm:general-classical-intro}, without loss of generality we assume that $\omega$ corresponds to a Dirichlet character $\chi\Mod{N}$.

Moreover, we now have a notion of (normalised) $\psi_{p}$-Whittaker function $W_{\varphi_{f}}$ associated to $f$, as in \S \ref{sec:whittaker-model}. Under the assumption that $f$ is a Hecke eigenform with respect to the operators $T_{p}$ (as defined in \cite[(9.3.5)]{goldfeld} for instance) for each $p\nmid N$ we have that
\begin{equation}\label{eq:whittaker-classical}
W_{\varphi_{f}}=W_{\infty}\otimes\bigotimes_{p<\infty}W_{p}.
\end{equation}
We fix the ongoing assumption that $W_{p}(1)=1$ for all primes $p<\infty$ so that \eqref{eq:whittaker-classical} imposes a constraint on the normalisation of $W_{\infty}\in \Wh(\pi_{\infty},\psi_{\infty})$, the so-called `Jacquet Whittaker function'.\footnote{This assumption only concerns finitely many primes $p$ since, by definition, $W_{p}=W_{p}^{\circ}$ is the unique $\GL_{n}(\Zp)$-fixed vector satisfying $W_{p}(1)=1$ for almost all $p\nmid N$ (cf.\ Proposition \ref{prop:shintani-formula}).}


\subsection{Fourier coefficients and Whittaker functions}\label{sec:classical-coeffs}

Henceforth, let us consider a cuspidal Maa\ss\ form $f\in L^{2}(\Gamma_{1}(N)\bs \SL_{n}(\R))$ which is a Hecke eigenform with respect to the operators $T_{p}$ for each $p\nmid N$. Without loss of generality, suppose $W_{\varphi_{f}}=\otimes_{v}W_{v}$ such that $\prod_{p<\infty}W_{p}(1)=1$, as before.

\begin{defn}\label{def:classical-fourier-coeffs}
For $(m_{1},\ldots,m_{n-1})\in\Z^{n-1}$ with $\prod_{i}m_{i}\neq 0$ let
\begin{equation*}
A_{f}(m_{1},\ldots,m_{n-1})=\prod_{i=1}^{n} \abs{m_{i}}^{\frac{i(n-i)}{2}}\prod_{p<\infty}W_{p}\left(\begin{pmatrix}
m_{1}\cdots m_{n-1}&&&\\
&&&\\
&\ddots&&\\
&&m_{n-1}&\\
&&&1
\end{pmatrix}\right).
\end{equation*}
When $\prod_{i}m_{i}= 0$, we extend the definition by requiring $A_{f}(m_{1},\ldots,m_{n-1})=0$. 
\end{defn}

At least when $(m_{1}\cdots m_{n-1},N)=1$, the following lemma implies that the coefficients $A_{f}(m_{1},\ldots,m_{n-1})$ are the Hecke eigenvalues of $f$ (cf. \cite[Lecture 7]{cogdell}).

\begin{lem}[Shintani's formula]\label{lem:shintani-classical}
Consider a prime $p\nmid N$, so that local component $\pi_{p}$ of $\pi_{f}=\otimes_{v}\pi_{v}$ is an unramified principal series representation (as in \eqref{eq:spherical-prince}) Satake parameters $\mu_{1}(p),\ldots,\mu_{n}(p)$. Then for integers $k_{i}\geq 0$, $i=1,\ldots,n-1$ we have
$$A_{f}(p^{k_{1}},\ldots,p^{k_{n-1}})=s_{\lambda}(\mu_{1}(p),\ldots,\mu_{n}(p))$$
for the partition $\lambda=(\lambda_{1},\ldots,\lambda_{n-1}, 0)$ with $\lambda_{i}=k_i+\cdots +k_{n-1}$ for $1\leq i \leq n-1$.
\end{lem}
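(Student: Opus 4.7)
The plan is to evaluate $A_f(p^{k_1}, \ldots, p^{k_{n-1}})$ by unfolding Definition \ref{def:classical-fourier-coeffs} prime-by-prime, reducing at the single prime $p$ to Shintani's formula (Proposition \ref{prop:shintani-formula}). Substituting $m_i = p^{k_i}$ turns the diagonal matrix inside each Whittaker function into $\varpi_p^\lambda = \operatorname{diag}(p^{\lambda_1}, \ldots, p^{\lambda_{n-1}}, 1)$, where $\lambda_i = k_i + k_{i+1} + \cdots + k_{n-1}$ and $\lambda_n = 0$. The hypothesis $k_i \geq 0$ instantly yields the dominance $\lambda_1 \geq \cdots \geq \lambda_n$ required by Shintani's formula.

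At every finite prime $q \neq p$, the diagonal entries are powers of $p$ --- hence units in $\mathbb{Z}_q^{\times}$ --- and the bottom-right entry equals $1$. Consequently this diagonal matrix lies in $K_1(N)_q$, both for $q \nmid N$ (where $K_1(N)_q = \GL_n(\mathbb{Z}_q)$) and for $q \mid N$ (where it satisfies the stabiliser condition on $(0, \ldots, 0, 1)$). Because $\varphi_f$ is right-$K_1(N)$-invariant and the Whittaker transform \eqref{eq:whittaker-isomorphism} is $\GL_n(\A_{\Q})$-intertwining, $W_q$ inherits right-$K_1(N)_q$-invariance; together with the normalisation $W_q(1) = 1$ imposed above, every such local factor contributes $1$, so the only surviving Whittaker factor in Definition \ref{def:classical-fourier-coeffs} is the one at $p$.

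At the prime $p$, the condition $p \nmid N$ ensures that $\pi_p$ is an unramified principal series and $W_p = W_p^{\circ}$ is the normalised spherical Whittaker vector. Shintani's formula then gives $W_p^{\circ}(\varpi_p^\lambda) = p^{c(\lambda, n)} s_\lambda(\mu_1(p), \ldots, \mu_n(p))$ with $c(\lambda, n) = \sum_{i=1}^{n} \lambda_i \bigl(\tfrac{n+1}{2} - i\bigr)$, the half-modular-character exponent. Combining this with the prefactor $\prod_{i=1}^{n-1} p^{k_i\, i(n-i)/2}$ from Definition \ref{def:classical-fourier-coeffs}, the lemma reduces to the combinatorial identity asserting that these two $p$-power contributions cancel, which I would verify by swapping the order of summation via $\lambda_j = \sum_{i \geq j} k_i$ to rewrite $\sum_j \lambda_j\bigl(\tfrac{n+1}{2} - j\bigr)$ as $\sum_i k_i\, i(n-i)/2$ (up to sign, dictated by the paper's Shintani normalisation). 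The proof presents no serious obstacle; the only moment warranting care is the step at primes $q \mid N$, which hinges essentially on the bottom-right diagonal entry being $1$ --- precisely the role of the trailing $1$ in Definition \ref{def:classical-fourier-coeffs}.
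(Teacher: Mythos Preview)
Your proposal is correct and follows essentially the same approach as the paper's proof: both reduce to Shintani's formula (Proposition~\ref{prop:shintani-formula}) at the single prime $p$ and then verify the combinatorial identity $\sum_{i}\lambda_{i}\bigl(\tfrac{n+1}{2}-i\bigr)=\sum_{i}k_{i}\,\tfrac{i(n-i)}{2}$ so that the archimedean prefactor cancels the modular-character exponent. The paper's argument is terser---it invokes the product formula $\abs{\gamma}_{\infty}=\prod_{p<\infty}\abs{\gamma}_{p}^{-1}$ in place of your explicit treatment of the primes $q\neq p$---but the content is the same.
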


\begin{proof}
The (square root of the) modular character determines the constant $$ p^{\sum_{i=1}^{n}\lambda_{i}\left(\frac{n+1}{2}-i\right)}= \prod_{i=1}^{n} p^{k_{i}\frac{i(n-i)}{2}}.$$
Recalling the reciprocity of absolute values, $\abs{\gamma}_{\infty}=\prod_{p<\infty}\abs{\gamma}_p^{-1}$ for $\gamma\in\Qx$, the claim now follows immediately from Proposition \ref{prop:shintani-formula}.
\end{proof}

\begin{rem}[Dual Maa\ss\ forms]
Consider the isomorphism between $\pi_{f}$ and its contragredient given by mapping $\varphi$ to the function $\varphi^{\iota}(g):=\varphi(\tran{g}{-1})$. We define the \textit{dual Maa\ss\ form}\footnote{For $N=1$, dual Maa\ss\ forms are discussed in \cite[\S 9.2]{goldfeld}.} $f^{\iota}\in L^{2}(\Gamma_{1}(N)\bs \SL_{2}(\R))$. At least for $(m_{1}\cdots m_{n-1},N)=1$, we have
\begin{equation}\label{eq:dual-coefficients}
A_{f^{\iota}}(m_{1},\ldots,m_{n-1})=\chi(m_{1}\cdots m_{n-1})A_{f}(m_{n-1},\ldots,m_{1}).
\end{equation}
\end{rem}

Our definition of the terms $A_{f}(m_{1},\ldots,m_{n-1})$ coincides with that given by Goldfeld \cite[p.\ 260, (9.1.2)]{goldfeld} and Kowalski--Ricotta \cite[\S 2]{kowalski-ricotta}, whose coefficients are denoted by ``$A$'' and ``$a_{f}$'', respectively; there the assumption $N=1$ is enforced. In particular, by \cite[Lem.\ 9.1.3]{goldfeld} we have the trivial bound $$A_{f}(m_{1},\ldots,m_{n-1}) \ll \prod_{k=1}^{n-1}\abs{m_{i}}^{i(n-i)/2}.$$
We make the following additional observations.
\begin{itemize}

\vspace{0.1in}
\item The Fourier coefficients are multiplicative: $$\hspace{0.5in}A_{f}(m_{1}m_{1}',\ldots,m_{n-1}m_{n-1}')=A_{f}(m_{1},\ldots,m_{n-1})A_{f}(m_{1}',\ldots,m_{n-1}')$$ whenever $(m_{1}\cdots m_{n-1},\ m_{1}'\cdots m_{n-1}')=1$.

\vspace{0.1in}
\item The Fourier coefficients ``arithmetically'' normalised: $A_{f}(1,\ldots,1)=1$.

\vspace{0.1in}
\item The coefficients $A_{f}(m_{1},\ldots,m_{n-1})$ are precisely those occurring in the various $L$-series \cite{godement-jacquet} attached to $f$, or rather $\pi_{f}$. (See \cite[\S 2]{kowalski-ricotta} for a concise explanation of this remark.)

\end{itemize}

\subsection{Derivation of the classical summation formula}\label{eq:proof-of-main}

Here we give a proof of Theorem \ref{thm:general-classical-intro}, obtained by specialising our choices in Theorem \ref{thm:voronoi-general}.

\subsubsection{The landscape}

The integers at which the Vorono\u{\i} summation problem ramifies are denoted by $M,N\geq 1$ where $N$ is the level of $f$ and $M$ determines the set of primes\footnote{We could as well choose $M$ to be square-free as we are only concerned with the set of primes dividing it.} $p\mid M$ at which we choose local test functions $\phi_{p}\in\Ccinf(\Qpx)$, alongside $\phi_{\infty}\in \Ccinf(\Rx)$.

Define the modulus $\zeta=(\zeta_{v})\in\A_{\Q}$ by first choosing $a,\ell,q\in\Z$ with $a\neq 0$, $\ell,q\geq 1$, $(a,\ell q)=(q,NM)=1$ and $\ell\mid (NM)^{\infty}$. Then we take $\zeta_{\infty}=0$ and $\zeta_{p}=a/\ell q$ for each $p<\infty$. By our choice of additive character $\psi$, for each $\gamma\in\Qx$ we have
\begin{equation}\label{eq:psi-to-e}
\psi(\zeta\gamma)= e(a\gamma/\ell q).
\end{equation}
Following Theorem \ref{thm:voronoi-general}, define the set $$S=\left\lbrace\infty\right\rbrace\cup\left\lbrace\, p \text{ prime} \, :\, p\mid M\,\right\rbrace$$
and for $y\in\Qp$ recall the definitions $\Phi_{p}(y)=\phi_{p}W_{p}(a(y))$, at the primes $p\mid M$, and $\Phi_{p}(y)=W_{p}(a(y))$ for $p\mid N/(M,N)$. However, at $\infty$ we make the assumption that $W_{\infty}(a(y))=\Char_{\supp(\phi_{\infty})}(y)$ by Proposition \ref{prop:shintani-formula}, simply so that $\Phi_{\infty}=\phi_{\infty}$.
 
Define the shift $\xi=(\xi_{v})\in T_{n}(\A_{\Q})$ as follows: for $i=2,\ldots,n-1$ make a choice of integers $c_{i}\geq 0$ such that $(c_{i},MN)=1$. Then, without loss of generality, for $p\nmid MN$ let $\xi_{p}=\diag(\xi_{1},\ldots,\xi_{n})$ such that $\xi_{i}=c_{i}\cdots c_{n-1}$ for $2\leq i \leq n-1$, $\xi_{1}=\xi_{2}$, and $\xi_{n}=1$; for $v=\infty$ and $v=p$ with $p\mid MN$ take $\xi_{v}=1$.

\subsubsection{The left-hand side}

With our specialist assumptions, the left-hand side of \eqref{eq:formula-general-theorem} is equal to
\begin{equation}\label{eq:lhs-classical}
\sum_{\gamma\in\Qx}e\bigg(\frac{am}{\ell q}\bigg)\prod_{p<\infty}  W_{p}\left(\begin{pmatrix}
\gamma c_{2}\cdots c_{n-1}&&&&\\
&c_{2}\cdots c_{n-1}&&&\\
&&\ddots &&\\
&&&c_{n-1}&\\
&&&&1
\end{pmatrix}\right)\prod_{v\in S}\phi_{v}(\gamma).\\
\end{equation}
Here we see the choice of $\xi$ was to be contained in the support of $\prod_{p<\infty}W_{p}$. Moreover, the summands of \eqref{eq:lhs-classical} are non-zero only for those $\gamma\in\Qx$ such that $\abs{\gamma c_{2}\cdots c_{n-1}}_{p}\leq\abs{c_{2}\cdots c_{n-1}}_{p}$. Thus, letting $\gamma=m\in \Z\cap\Qx$ and applying Definition \ref{def:classical-fourier-coeffs} we obtain that \eqref{eq:lhs-classical} is equal to
\begin{equation}\label{eq:lhs-finished-classical}
\sum_{m\in\Z_{\neq 0}}e\bigg(\frac{am}{\ell q}\bigg)\frac{A_{f}(m,c_{2},\ldots,c_{n-1})}{\abs{m}^{\frac{n-1}{2}}\prod_{i=2}^{n-1}\abs{c_{i}}^{\frac{i(n-i)}{2}}}\phi_{\infty}(\gamma)\prod_{p\mid M}\phi_{p}(\gamma).
\end{equation}

\subsubsection{The right-hand side}

Firstly, reconsider the set $R$ in Theorem \ref{thm:voronoi-general}:
$$ R=\{\, p \text{ prime} \,:\, p\mid qc_{2}\cdots c_{n-1}\,\}.$$
Recalling that $\xi_{1}=\xi_{2}$ by assumption, the problem bifurcates according to whether $p\in R$ satisfies $p\mid q$ or not. On the right-hand side of \eqref{eq:formula-general-theorem}, one incurs the set $T_{R}^{1}=\otimes_{p\in R}T_{p}^{1}$; the Kloosterman sum $\Kl_{R}(\gamma,t;\zeta,\xi)$; the Whittaker functions $\tilde{W}_p$ at primes $p\nmid MN$; and the local Bessel transforms at $p\mid MN$. The support of the $T_{p}^{1}$-sum is determined by the Whittaker function, according to Proposition \ref{prop:shintani-formula}. In particular, the support contains only those $\diag(t_{2},\ldots,t_{n-1})\in T_{p}^{1}$ whence we make the following inductive change of variables
\begin{equation*}
t_{i}^{-1}=q\frac{c_{n-i+1}\cdots c_{2}}{d_{i}\cdots d_{n-1}}\text{ for some }d_i\mid q\frac{ c_{n-i+1} \cdots c_{2}}{d_{i+1}\cdots d_{n-1}}
\end{equation*}
with respect to $i=n-1,\ldots,2$. In this coordinate system, the Whittaker function on the right-hand side of \eqref{eq:formula-general-theorem} reads
\begin{equation}\label{eq:rhs-whittaker}
\begin{array}{l}\vspace{0.15in}
\displaystyle\prod_{p\nmid MN}\tilde{W}_{p}\left(\frac{1}{qc_{2}\cdots c_{n-1}}\begin{pmatrix}
\gamma\det t^{-1}q^2&&&&\\
&qc_{2}\cdots c_{n-1}t_{2}&&&\\
&&\ddots &&\\
&&&qc_2 t_{n-1}&\\
&&&&1
\end{pmatrix}
\right)\\
\displaystyle =\,\chi(qc_{2}\cdots c_{n-1})^{-1}\frac{A_{f^{\iota}}(m,d_{2},\ldots,d_{n-1})}{\abs{m}^{\frac{n-1}{2}}\prod_{i=2}^{n-1}d_{i}^{\frac{i(n-i)}{2}}}\\
\end{array}
\end{equation}
where $m\in\Z$ with $(m,MN)=1$ and $\gamma_{0}\in\Qx$ such that $\abs{\gamma_{0}}_{p}=1$ for each $p\nmid MN$, together determining the change of variables
$$\gamma=\gamma_{0}\frac{ m d_{2}\cdots d_{n-1}}{q^{n}\prod_{i=2}^{n-1}\left(\frac{c_{n-i+1}}{d_i}\right)^{i-1}}.$$
Noting our observation \eqref{eq:dual-coefficients} on the coefficients of the dual Maa\ss\ form $f^{\iota}$, \eqref{eq:rhs-whittaker} is equal to
\begin{equation}
\chi\left(\frac{qc_{2}\cdots c_{n-1}}{md_{n-1}\cdots d_{2}}\right)^{-1}\frac{A_{f}(d_{n-1},\ldots,d_{2},m)}{\abs{m}^{\frac{n-1}{2}}\prod_{i=2}^{n-1}d_{i}^{\frac{i(n-i)}{2}}}.
\end{equation}

To unwrap the hyper-Kloosterman sums (of \S \ref{sec:hyper-kloosterman}), first note that $\xi_{1}\xi_{2}^{-2}=1$. We have the following two cases $p\nmid q$, so that $\abs{\zeta_{p}}_{p}=\abs{a}_{p}\leq 1$, and $p\mid q$, so that $\abs{\zeta_{p}}_{p}=\abs{q}^{-1}_{p}> 1$. In either case, noting $\psi_p(-\xi_2\xi_3^{-1})=e(-c_2)=1$, we incur the following constant
\begin{equation*}
\prod_{p\mid q c_{2}\cdots c_{n-1}}\abs{\xi_2 q^{-1}}_p^{n-2}\abs{\xi_3\cdots\xi_n}_{p}^{-1}\psi_p(-\xi_2\xi_3^{-1})=\frac{q^{n-2}}{\prod_{i=2}^{n-1}\abs{c_i}^{n-i}}.
\end{equation*}
Applying the Chinese remainder theorem to the product $\Kl_R(\gamma,t;\zeta,\xi)$, let us make the following inductive change of variables in the $\Lambda_{t_{i}}$-sum: $x_{n-1}=x_{j}=\left(\frac{q c_2}{r_{n-1}}\right)^{-1}\alpha_{n-1}$ with $\alpha_{n-1}\in\left(\Z/\left( \frac{q c_2 }{r_{n-1}}
\right) \Z\right)^{\times}$  and $$x_{j}=\left(\frac{q c_2 \cdots c_{n-j+1}}{r_j\cdots r_{n-1}}\right)^{-1}\alpha_{j}\bar{\alpha}_{j+1}\text{  with  }\alpha_{j}\in \left(\Z/\left( \frac{q c_2 \cdots c_{n-j+1}}{r_j\cdots r_{n-1}}
\right) \Z\right)^{\times}$$ for each $n-2\geq j \geq 2$, noting the congruence $\alpha_{j+1}\bar{\alpha}_{j+1}\equiv 1 \Mod{ \frac{q c_2 \cdots c_{n-j+1}}{r_j\cdots r_{n-1}}}$ is well defined. The resulting sum $\Kl_R(\gamma,t;\zeta,\xi)$ contains the summands
\begin{equation*}
\prod_{p\mid q c_{2}\cdots c_{n-1}}\psi_{p}(\xi_{n-j+1}\xi_{n-j+2}^{-1}x_{j})=e\left(\frac{c_{n-j+1}\alpha_{j}\bar{\alpha}_{j+1}}{\frac{qc_{2}\cdots  c_{n-j+1}} {d_{n-1}\cdots d_{j}}}\right)=e\left(\frac{d_{j}\alpha_{j}\bar{\alpha}_{j+1}}{\frac{qc_2\cdots c_{n-j}} {d_{n-1}\cdots d_{j+1}}}\right),
\end{equation*}
for $n-2\geq j \geq 2$; the term $e\left(\frac{d_{n-1}\alpha_{n-1}}{q}\right)$; and
\begin{equation*}
\begin{array}{l}\vspace{0.1in}
\displaystyle\prod_{p\mid q c_{2}\cdots c_{n-1}}\psi_{p}( (-1)^{n}\gamma\zeta_{v}^{-1}\xi_{2}^{-1}\xi_{n} x_{n-1}^{-1}\cdots x_{2}^{-1})\\\vspace{0.15in}
\hspace{0.8in}\displaystyle=\, e\left((-1)^{n}\frac{\gamma_0 md_2\cdots d_{n-1}}{q^n\prod_{i=2}^{n-1}\left(\frac{c_{n-i+1}}{r_i} \right)^{i-1}}   \frac{\frac{\ell q}{a}}{c_{2}\cdots c_{n-1}  }q^{n-2}\prod_{i=2}^{n-1}\left(\frac{c_{n-i+1}}{r_i} \right)^{i-1}\bar{\alpha}_{2}\right)\\
\hspace{0.8in}\displaystyle=\,  e\left((-1)^{n} \frac{m\gamma_0  \frac{\ell}{ a} \bar{\alpha}_{2}}{\frac{qc_{2}\cdots c_{n-1} } {d_{n-1}\cdots d_{2}}}\right).
\end{array}
\end{equation*}
Shifting each $\alpha_j$-variable by $\ell\gamma_0 /a$, we remove it from the above exponential and recover it in the term $e\left(\frac{\bar{a}\ell d_{n-1}\alpha_{n-1}}{q}\right)$. Altogether we obtain that $$\Kl_R(\gamma,t;\zeta,\xi)=\KL_{n-1}(\bar{a}\ell\gamma_0, m;q,c,d)$$
where $a\bar{a}\equiv 1\Mod{q}$, $d:=(d_{2},\ldots,d_{n-1})$, $c:=(c_{2},\ldots,c_{n-1})$, and the classical Kloosterman sum was defined in \eqref{eq:kloosterman-classical-def}.

We now consider the $MN$-part of the summands $\gamma\in\Qx$, indexed by $\gamma_0$. Let $L$ denote the largest square free integer such that $L\mid MN$. Suppose that the product $\prod_{p\mid MN}\Bc_{\pi_p,\Phi_{p}^{a/\ell q}}(\gamma)$ is non-zero. Then Proposition \ref{prop:bessel-support} implies that there exists $r\mid (MN)^{\infty}$ such that $$\gamma_0=\frac{r}{[\ell,N]\ell^{n-1}L^{n}}.$$


Altogether, the right-hand side of \eqref{eq:formula-general-theorem} is equal to
\begin{equation*}
\begin{array}{l}\vspace{0.15in}
\displaystyle
\frac{q^{n-2}}{\prod_{i=2}^{n-1}\abs{c_i}^{n-i}}
\displaystyle\sum_{\substack{m\in\Z_{\neq 0}\\(m,MN)=1}}\sum_{r\mid (MN)^{\infty}}\sum_{d_{n-1}\mid qc_{2}}\, \sum_{d_{n-2}\mid \frac{q c_{2}c_3 }{d_{n-1}}}\cdots \sum_{d_2\mid \frac{q c_{2} \cdots c_{n-1}}{d_{n-1}\cdots d_{3}}}\KL(\overline{a\lambda}_{\ell}\ell,m;q,c,d)\hspace{0.2in}\\\vspace{0.1in}
\displaystyle\hfill\times\,\chi\left(\bar{m}\frac{qc_{2}\cdots c_{n-1}}{md_{n-1}\cdots d_{2}}\right)^{-1}\frac{A_{f}(d_{n-1},\ldots,d_{2},m)}{\abs{m}^{\frac{n-1}{2}}\prod_{i=2}^{n-1}d_{i}^{\frac{i(n-i)}{2}}}\Bc_{\pi_\infty,\phi_{\infty}}\left(\frac{r m d_{2}\cdots d_{n-1}}{\lambda_{\ell}q^{n}\prod_{i=2}^{n-1}\left(\frac{c_{n-i+1}}{d_i}\right)^{i-1}}\right)\\
\displaystyle\hfill\times\, \prod_{p\mid MN}\Bc_{\pi_p,\Phi_{p}^{a/\ell q}}\left(\frac{r m d_{2}\cdots d_{n-1}}{\lambda_{\ell}q^{n}\prod_{i=2}^{n-1}\left(\frac{c_{n-i+1}}{d_i}\right)^{i-1}}\right)
\end{array}
\end{equation*}
where $\lambda_{\ell}:=[\ell,N]\ell^{n-1}L^{n}$, $a\bar{a}\equiv\lambda_{\ell}\bar{\lambda}_{\ell}\equiv 1\Mod{q}$, and $m\bar{m}\equiv 1\Mod{N}$.
Thus we conclude the proof of Theorem \ref{thm:general-classical-intro}.


\bibliographystyle{amsplain}			
\bibliography{bibliography}				

\end{document}
